
\documentclass[11pt,reqno]{amsart}

\usepackage{floatrow}

\usepackage{amsmath}
\usepackage{amssymb}
\usepackage[left=1in,top=1in,right=1in,bottom=1in]{geometry}
\usepackage{epsfig}
\usepackage[normalem]{ulem}
\usepackage[usenames,dvipsnames]{color}
\usepackage{todonotes}
%Change back to blue if you want
\usepackage[colorlinks, citecolor=black, linkcolor=black, urlcolor=black]{hyperref}
\usepackage{bm}
\usepackage{algorithm}
\usepackage{algpseudocode}

\usepackage{enumitem}

\allowdisplaybreaks

\usepackage{hyperref}
\usepackage{cleveref}
\crefname{subsection}{subsection}{subsections}

% \usepackage[backend=biber,style=numeric-comp,sorting=nyt]{biblatex}
% \addbibresource{refs.bib}

\usepackage{tikz}
\usetikzlibrary{backgrounds}
\usetikzlibrary{intersections}
%\usepackage{pgfplots}
%\usepackage{float}

% Table float box with bottom caption, box width adjusted to content
\newtheorem{theorem}{Theorem}[section]
\newtheorem{lemma}[theorem]{Lemma}

\newtheorem{observation}[theorem]{Observation}
\newtheorem{algorithms}[theorem]{Algorithm}
\newtheorem{corollary}[theorem]{Corollary}

\newcommand\eps{\varepsilon}
\newcommand{\E}{\mathbb E}

\newcommand{\Prob}{\mathbb{P}}

\newcommand{\N}{{\mathbb N}}

% Calum's macros
\newcommand{\scr}{\mathcal}
\newcommand{\mb}{\mathbb}

\theoremstyle{definition}

\setlength{\tabcolsep}{10pt}

\newcommand{\bigo}{\mathcal{O}}

\title{Cliques, Chromatic Number, and Independent Sets in the Semi-random Process}

\author{David Gamarnik}
\address{MIT Sloan School of Management, MIT, Cambridge, USA}
\email{gamarnik@mit.edu}

\author{Mihyun Kang}
\address{Institute of Discrete Mathematics, Graz University of Technology, Graz, Austria}
\email{kang@math.tugraz.at}

\author{Pawe\l{} Pra\l{}at}
\address{Department of Mathematics, Toronto Metropolitan University, Toronto, Canada}
\email{pralat@torontomu.ca}

\date{}

\begin{document}

\maketitle

\begin{abstract}
The semi-random graph process is a single player game in which the player is initially presented an empty graph on $n$ vertices. In each round, a vertex $u$ is presented to the player independently and uniformly at random. The player then adaptively selects a vertex $v$, and adds the edge $uv$ to the graph. For a fixed monotone graph property, the objective of the player is to force the graph to satisfy this property with high probability in as few rounds as possible. In this paper, we investigate the following three properties: containing a complete graph of order $k$, having the chromatic number at least $k$, and not having an independent set of size at least $k$.
\end{abstract}

%%%%%%%%%%%%%%%%%%%%%%%%%%%%%%%%%%%%%%%%%%%%%%%%%
\section{Introduction and Main Results}
%%%%%%%%%%%%%%%%%%%%%%%%%%%%%%%%%%%%%%%%%%%%%%%%%

%%%%%%%%%%%%%%%%%%%%%%%%%%%%%%%%%%%%%%%%%%%%%%%%%
\subsection{Definitions} 
%%%%%%%%%%%%%%%%%%%%%%%%%%%%%%%%%%%%%%%%%%%%%%%%%

In this paper, we consider the \textbf{semi-random graph process} suggested by Peleg Michaeli, introduced formally in~\cite{beneliezer2019semirandom}, and studied recently in~\cite{behague2022,beneliezer2020fast,burova2022semi,frieze2022hamilton,gao2020hamilton,gao2022fully,gao2022perfect,macrury2022sharp,frieze2023building,molloy2023matchings} that can be viewed as a ``one player game''. The process starts from $G_0$, the empty graph on the vertex set $[n]:=\{1,\ldots,n\}$ where $n\in \mathbb N$. In each \textbf{round} $t\in \mathbb N$, a vertex $u_t$ is chosen uniformly at random from $[n]$. Then, the player (who is aware of graph $G_t$ and vertex $u_t$) must select a vertex $v_t$ and add the edge $u_tv_t$ to $G_t$ to form $G_{t+1}$. The goal of the player is to build a (multi)graph satisfying a given property $\scr{P}$ as quickly as possible. It is convenient to refer to $u_t$ as a {\bf square}, and $v_t$ as a {\bf circle} so every edge in $G_t$ joins a square with a circle. We say that $v_t$ is paired to $u_t$ in step $t$. Moreover, we say that vertex $x \in [n]$ is \textbf{covered} by the square $u_t$ arriving at round $t$, provided $u_t = x$. The analogous definition extends to the circle $v_t$. Equivalently, we may view $G_t$ as a directed graph where each arc  directs from $u_t$ to $v_t$, and thus we may use $(u_t,v_t)$ to denote the edge added in step~$t$. For this paper, it is easier to consider squares and circles for counting arguments.

\medskip

A \textbf{strategy} $\scr{S}$ is defined by specifying for each $n \ge 1$, a sequence of functions $(f_{t})_{t=1}^{\infty}$, where for each $t \in \mb{N}$, $f_t(u_1,v_1,\ldots, u_{t-1},v_{t-1},u_t)$ is a distribution over $[n]$ which depends on the vertex $u_t$, and the history of the process up until step $t-1$. Then, $v_t$ is chosen according to this distribution. If $f_t$ is an atomic distribution, then $v_t$ is determined by $u_1,v_1, \ldots ,u_{t-1},v_{t-1},u_t$. We then denote $(G_{i}^{\scr{S}}(n))_{i=0}^{t}$ as the sequence of random (multi)graphs obtained by following the strategy $\scr{S}$ for $t$ rounds; where we shorten $G_{t}^{\scr{S}}(n)$ to $G_t$ or $G_{t}(n)$ when clear. 

%%%%%%%%%%%%%%%%%%%%%%%%%%%%%%%%%%%%%%%%%%%%%%%%%
\subsection{Notation}
%%%%%%%%%%%%%%%%%%%%%%%%%%%%%%%%%%%%%%%%%%%%%%%%%

Results presented in this paper are asymptotic by nature. We say that some property $\mathcal P$ holds \textbf{asymptotically almost surely} (or \textbf{a.a.s.}) if the probability that the semi-random process has this property (after possibly applying some given strategy) tends to $1$ as $n$ goes to infinity. 
Given two functions $f=f(n)$ and $g=g(n)$, we will write $f(n)=\bigo(g(n))$ if there exists an absolute constant $c \in \mathbb R_+$ such that $|f(n)| \leq c|g(n)|$ for all $n$, $f(n)=\Omega(g(n))$ if $g(n)=\bigo(f(n))$, $f(n)=\Theta(g(n))$ if $f(n)=\bigo(g(n))$ and $f(n)=\Omega(g(n))$, and we write $f(n)=o(g(n))$ or $f(n) \ll g(n)$ if $\lim_{n\to\infty} f(n)/g(n)=0$. In addition, we write $f(n) \gg g(n)$ if $g(n)=o(f(n))$ and we write $f(n) \sim g(n)$ if $f(n)=(1+o(1))g(n)$, that is, $\lim_{n\to\infty} f(n)/g(n)=1$.

\medskip

We will use $\log n$ to denote a natural logarithm of $n$. As mentioned earlier, for a given $n \in \N := \{1, 2, \ldots \}$, we will use $[n]$ to denote the set consisting of the first $n$ natural numbers, that is, $[n] := \{1, 2, \ldots, n\}$. Finally, as typical in the field of random graphs, for expressions that clearly have to be an integer, we round up or down but do not specify which: the choice of which does not affect the argument.

%%%%%%%%%%%%%%%%%%%%%%%%%%%%%%%%%%%%%%%%%%%%%%%%%
\subsection{Main Results---Complete Graphs} 
%%%%%%%%%%%%%%%%%%%%%%%%%%%%%%%%%%%%%%%%%%%%%%%%%

In this paper, we investigate three monotone properties. The first one is the property of containing $K_k$, a complete graph of order $k$. In the very first paper on the semi-random process~\cite{beneliezer2019semirandom}, it was proved that a.a.s.\ one may construct a complete graph of a constant order $k$ once there are vertices with at least $k$ squares on them. On the other hand, if no vertex receives at least $k$ squares, it is impossible to achieve it a.a.s. Specifically, the following result was proved.

\begin{observation}[\cite{beneliezer2019semirandom}]\label{obs:cliques1}
Fix an integer $k \ge 3$ and any function $\omega=\omega(n)$ that tends to infinity as $n \to \infty$. Then, the following hold.
\begin{itemize}
\item[(a)] There exists a strategy that a.a.s.\ creates $K_k$ at time $t = \omega n^{(k-2)/(k-1)}$. 
\item[(b)] There is no strategy that a.a.s.\ creates $K_k$ at time $t = n^{(k-2)/(k-1)} / \omega$. 
\end{itemize}
\end{observation}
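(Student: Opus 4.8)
Part (b) is a first--moment estimate that works against every strategy at once, while part (a) calls for one explicit strategy; I sketch both.

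\smallskip

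\noindent\emph{Part (b).} The plan is to fix an arbitrary strategy and track, for each $2\le j\le k$, the number $X_j(r)$ of copies of $K_j$ in $G_r$. The key observation is that when the edge $(u_{r+1},v_{r+1})$ is added, every newly created copy of $K_j$ must contain this edge, hence has the form $\{u_{r+1},v_{r+1}\}\cup S$ with $S$ a $(j-2)$--clique of $G_r$ inside $N_{G_r}(u_{r+1})\cap N_{G_r}(v_{r+1})$; distinct such $S$ give distinct $(j-1)$--cliques $\{u_{r+1}\}\cup S$ of $G_r$ through $u_{r+1}$. Thus $X_j(r+1)-X_j(r)$ is at most the number of $(j-1)$--cliques of $G_r$ containing $u_{r+1}$, \emph{whatever $v_{r+1}$ the player picks}, so averaging over the uniform square $u_{r+1}$ gives
\[
\E\big[X_j(r+1)-X_j(r)\mid G_r\big]\ \le\ \frac{j-1}{n}\,X_{j-1}(r).
\]
Since $X_2(r)\le r$ and each $X_{j-1}$ is nondecreasing along the process, summing over $r$ and iterating in $j$ yields $\E[X_k(t)]\le t^{k-1}/n^{k-2}$, which for $t=n^{(k-2)/(k-1)}/\omega$ equals $\omega^{-(k-1)}=o(1)$; Markov's inequality then gives $X_k(t)=0$ a.a.s., which is part (b). The only point needing care is the displayed increment bound, which must hold \emph{uniformly over all strategies}; everything after it is a one--line induction.

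\smallskip

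\noindent\emph{Part (a).} The plan is to pin down one $(k-2)$--clique and then grow it by two vertices at a time. First I would build a single copy $C=\{c_1,\dots,c_{k-2}\}$ of $K_{k-2}$: for $k\le 4$ this takes one round (take $c_1=u_1$, and if $k=4$ join $u_1$ to a fresh vertex $c_2$), while for $k\ge 5$ I would invoke the inductive hypothesis (part (a) for $k-2$) to produce $C$ within $\omega_0\,n^{(k-4)/(k-3)}=o(t)$ rounds for a slowly growing $\omega_0\to\infty$. For the remaining $t'\sim t$ rounds I would apply the rule: if a square lands on a vertex $w\notin C$ currently joined to exactly $\ell<k-2$ of the $c_i$, join $w$ to $c_{\ell+1}$, and call $w$ \emph{ripe} once it is joined to all of $C$; if a square lands on a ripe vertex $w$ while another ripe vertex $w'$ already exists, join $w$ to $w'$, so that $C\cup\{w,w'\}$ is a copy of $K_k$ and we stop. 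All remaining squares are discarded.

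The analysis hinges on the fact that $w$ becomes ripe exactly upon receiving its $(k-2)$-th square. The expected number of ripe vertices is $\sim n\,\Prob\big[\Bin(t',1/n)\ge k-2\big]\sim (t')^{k-2}/\big((k-2)!\,n^{k-3}\big)=\Theta\big(\omega^{k-2}\,n^{1/(k-1)}\big)=:M\to\infty$, and a standard balls--in--bins concentration argument shows that a.a.s.\ $\Omega(M)$ vertices are already ripe after $t'/2$ rounds. Hence the expected number of times in the last $t'/2$ rounds that a square lands on a ripe vertex is at least $\tfrac{t'}{2}\cdot\tfrac{\Omega(M)}{n}=\Omega(\omega^{k-1})\to\infty$, so a.a.s.\ this occurs and a copy of $K_k$ is completed. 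The hard part here is the timing: I must verify that enough vertices ripen \emph{early enough} that a closing square still has time to arrive, and this is exactly what the two estimates $M\to\infty$ (valid for every $\omega\to\infty$) and $t'M/n=\Theta(\omega^{k-1})\to\infty$ provide. Handling the concentration claims and the interleaving of the ripening phase with the closing step is the only real obstacle; the underlying counting is the computation already indicated.
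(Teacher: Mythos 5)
The paper only cites Observation~\ref{obs:cliques1} from~\cite{beneliezer2019semirandom}; no proof is reproduced here, so there is no in-paper argument to compare against line by line. Both parts of your proposal are correct.

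For part~(b), the increment bound $\E[X_j(r+1)-X_j(r)\mid G_r]\le \frac{j-1}{n}X_{j-1}(r)$ is exactly the right object and your justification is sound: every newly created $K_j$ must use the edge $\{u_{r+1},v_{r+1}\}$, and the map $S\mapsto S\cup\{u_{r+1}\}$ injects these into $(j-1)$-cliques through $u_{r+1}$, whose conditional expectation given $G_r$ is $(j-1)X_{j-1}(r)/n$ because $u_{r+1}$ is uniform and independent of $G_r$ and of the player's choices. Combined with $X_2(t)\le t$, monotonicity of $X_{j-1}$ in $r$, and telescoping, you get $\E[X_k(t)]=O\!\left(t^{k-1}/n^{k-2}\right)$; the suppressed constant is $(k-1)!$, which is harmless since $k$ is fixed, and Markov finishes. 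This is uniform over all (adaptive) strategies, which you correctly flag as the one point needing care.

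For part~(a), your two-stage ``build $K_{k-2}$, then ripen and close'' strategy works, and the arithmetic $M=\Theta(\omega^{k-2}n^{1/(k-1)})\to\infty$ together with $t'M/n=\Theta(\omega^{k-1})\to\infty$ is exactly what is needed; negative correlation gives the concentration you invoke. Two remarks. First, a simpler strategy (essentially Algorithm~\ref{alg:1} of this paper, used there to prove Theorem~\ref{thm:small_t}(a)) grows the clique one vertex at a time by always connecting the $j$-th square landing on a non-clique vertex to the $j$-th clique vertex; it finishes once some vertex accumulates $k-1$ squares, a single event whose expected count is already $\Theta(\omega^{k-1})\to\infty$, avoiding your induction and the two-ripe-vertex bookkeeping. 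Your version is correct but slightly heavier. Second, a technicality worth a sentence in a full write-up: one should first cap $\omega$ (say at $\log n$), which is legitimate since the target property is monotone in $t$, so that $t/n\to 0$ and the binomial tail is genuinely $\sim (t/n)^{k-2}/(k-2)!$; for larger $\omega$ the conclusion only gets easier.
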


In fact, part~(a) of the above observation was proved for a larger family of graphs that are $k-1 \ (\, \ge 2)$ degenerate (see Section~\ref{sec:chi} for the definition of degeneracy). Moreover, it was conjectured that part~(b) can be generalized to such large family of graphs. The conjecture was proved recently in~\cite{behague2022}. As a result, creating graphs of a constant size is well-understood---essentially, creating a fixed graph with degeneracy $d$ is possible once the process lasts long enough so that there are vertices with at least $d-1$ squares. 

\medskip

On the other hand, constructing complete graphs of order $k \gg \log n$ is very simple and can be done in almost optimal way. It follows immediately from Chernoff's bound (see~(\ref{chern1}) and~(\ref{chern})), together with the union bound over all vertices, that if $t \gg n \log n$, then a.a.s.\ all vertices receive 
$$
\frac {t}{n} \left(1+ \bigo \left( \sqrt{ \frac {\log n}{t/n} } \right) \right) \sim \frac {t}{n}
$$
squares. One may try to create a complete graph on the vertex set $[k]$ for $k=2\ell+1$ ($\ell \in \N$) by connecting the $j$th square ($j \in [\ell]$) landing on vertex $i$ with vertex $(i-1+j) \pmod{2\ell+1} + 1$. This simple algorithm yields a lower bound for the size of the complete graph. To get an upper bound, we simply observe that it is impossible to create $K_{k'}$ if the maximum degree is smaller than $(k'-1)/2$. After combining the two observations, we get the following. 

\begin{observation}\label{obs:cliques2}
Suppose that $t = t(n) \ge \omega n \log n$, where $\omega=\omega(n)$ is any function that tends to infinity as $n \to \infty$. Let 
\begin{eqnarray*}
k &=& k(n) \ := \  \frac {2t}{n} \left( 1-\omega^{-1/3} \right) \ \sim\ \frac {2t}{n}\\ 
k' &=&  k'(n)\ :=\  \frac {2t}{n} \left( 1+\omega^{-1/3} \right) \ \sim\ \frac {2t}{n}  \ \sim\  k.
\end{eqnarray*} 
Then, the following hold.
\begin{itemize}
\item[(a)] There exists a strategy that a.a.s.\ creates $K_{\min\{k,n\}}$ at time $t$. 
\item[(b)] There is no strategy that a.a.s.\ creates $K_{k'}$ at time $t$. 
\end{itemize}
\end{observation}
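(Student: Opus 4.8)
The plan is to handle the two parts separately, in both cases reducing to a Chernoff estimate for the number of squares landing on a fixed set of vertices. The point that makes this clean is that, whatever strategy the player follows, the square sequence $u_1, u_2, \ldots$ is i.i.d.\ uniform on $[n]$; hence for a fixed $S \subseteq [n]$ the number of the first $t$ rounds with $u_i \in S$ is distributed as $\Bin(t, |S|/n)$, independently of the strategy.

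For part (a), I would use the circulant strategy sketched just above the statement. Pick $\ell$ so that $K_{2\ell+1}$ (or $K_n$, with the obvious minor adjustment when $\min\{k,n\}=n$) contains $K_{\min\{k,n\}}$; whenever a square $u_t = i \in [2\ell+1]$ arrives and this is the $j$-th square on $i$ with $j \le \ell$, set $v_t := ((i-1+j) \bmod (2\ell+1)) + 1$, and pair all other squares arbitrarily. Since the circulant graph on $2\ell+1$ vertices with connection set $\{1,\dots,\ell\}$ is exactly $K_{2\ell+1}$, the process builds the desired clique as soon as each of these at most $n$ vertices has received at least $\ell$ squares. Each fixed vertex receives $\Bin(t,1/n)$ squares, with mean $t/n \ge \omega\log n$, and the definition of $k$ gives $\ell \le (1-\omega^{-1/3})\, t/n$; a Chernoff bound (see~(\ref{chern1}) and~(\ref{chern})) bounds the probability of fewer than $\ell$ squares on a given vertex by $\exp(-\tfrac12 \omega^{-2/3}\, t/n) \le n^{-\omega^{1/3}/2}$, and a union bound over at most $n$ vertices completes part (a).

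For part (b), I may assume $k' \le n$, since otherwise the statement is vacuous. Fix any strategy and any $k'$-set $S$. If $G_t[S] = K_{k'}$, then $G_t$ has at least $\binom{k'}{2}$ edges inside $S$, each contributed by a distinct round whose square lay in $S$, so at least $\binom{k'}{2}$ of the first $t$ squares fell in $S$; as noted, this event has probability $\Prob[\Bin(t,k'/n) \ge \binom{k'}{2}]$, with no dependence on the strategy. Since $\E[\Bin(t,k'/n)] = tk'/n$ and $\binom{k'}{2}\big/(tk'/n) = (1-1/k')(1+\omega^{-1/3}) \ge 1 + \tfrac12\omega^{-1/3}$ (using $k' \ge 2t/n \ge 2\omega\log n$), a Chernoff bound yields
\[
\Prob\Big[\Bin(t,k'/n) \ge \tbinom{k'}{2}\Big] \;\le\; \exp\Big(-\tfrac{1}{12}\omega^{-2/3}\cdot\tfrac{tk'}{n}\Big) \;\le\; \exp\Big(-\tfrac{1}{6}\omega^{-2/3}\big(\tfrac{t}{n}\big)^2\Big) \;\le\; \exp\Big(-\tfrac{1}{6}\omega^{1/3}\cdot\tfrac{t}{n}\log n\Big).
\]
Taking a union bound over the at most $\binom{n}{k'} \le \exp(k'\log n) \le \exp\big(4\tfrac{t}{n}\log n\big)$ choices of $S$, the probability that some copy of $K_{k'}$ is present is at most $\exp\big(-(\tfrac16\omega^{1/3}-4)\tfrac{t}{n}\log n\big) = o(1)$, which proves part (b).

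I expect the only point requiring genuine care to be this last union bound: it is essential that $t \ge \omega n\log n$ with $\omega \to \infty$, since the per-set failure probability carries an exponent of order $\omega^{-2/3}(t/n)^2$, which must dominate the entropy term $k'\log n = \Theta\big((t/n)\log n\big)$ coming from the number of candidate cliques — this is precisely where the diverging factor $\omega$ is spent. Everything else is routine bookkeeping: the parity of $k$ and the $\min\{k,n\}$ truncation in part (a), and the standard identity that the circulant graph $C_{2\ell+1}(1,\dots,\ell)$ equals $K_{2\ell+1}$.
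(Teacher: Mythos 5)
Your proof is correct, and part (a) follows essentially the paper's intended argument: the circulant strategy on $\min\{k,n\}$ vertices, a Chernoff lower-tail estimate for $\Bin(t,1/n)$, and a union bound over at most $n$ vertices.

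Part (b) takes a genuinely different route from the paper. The paper's intended argument (sketched just above the observation) is local and per-vertex: every edge of $G_t$ has exactly one square endpoint, so a copy of $K_{k'}$ on a set $S$ contains $\binom{k'}{2}$ squares distributed over the $k'$ vertices of $S$, and by pigeonhole some vertex of $S$ must carry at least $(k'-1)/2$ squares. Since a.a.s.\ every vertex carries at most $\tfrac{t}{n}\bigl(1+O(\sqrt{\log n/(t/n)})\bigr) < (k'-1)/2$ squares, no $K_{k'}$ can appear. This needs only a deviation of order $\omega^{-1/3}\,t/n$, giving Chernoff exponent $\Theta(\omega^{-2/3}\,t/n)=\Omega(\omega^{1/3}\log n)$, and a union bound over just $n$ vertices. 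You instead count the total number of squares falling in a fixed $k'$-set and union-bound over all $\binom{n}{k'}$ candidate sets, which forces you to beat an entropy term of order $k'\log n=\Theta\bigl((t/n)\log n\bigr)$; this works because the mean $tk'/n=\Theta\bigl((t/n)^2\bigr)$ makes the exponent scale quadratically in $t/n$, and the spare factor of $\omega$ absorbs the entropy. Both arguments are valid, but the paper's version is lighter (no union bound over sets) and pinpoints the true bottleneck — squares per vertex — more directly; it also makes clear that the extra factor $\omega$ is needed only to beat the union bound over $n$ vertices, whereas in your argument it is doing the heavier job of beating $\binom{n}{k'}$.
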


In fact, much stronger property holds. Let $H$ be an $n$-vertex graph of maximum degree $\Delta = \Delta(n) \gg \log n$. In~\cite{beneliezer2020fast} it was proved that there exists a strategy to build $H$ in $(\Delta n / 2)(1+o(1))$ rounds. 

\medskip

In light of Observations~\ref{obs:cliques1} and~\ref{obs:cliques2}, it remains to investigate how large complete graphs one can build in $t$ rounds, provided that $t = t(n) = n^{1+o(1)}$ and $t = \bigo( n \log n )$. Recall that $f(n)=o(1)$ only means that $\lim_{n\to\infty} f(n)=0$ and so it might be negative. In particular, we allow $t \ll n$ as long as $t = n^{1+o(1)}$. If $t = o(n \log n)$, then one may create a complete graph of size which is asymptotic to the maximum number of squares on a single vertex---see Lemma~\ref{lem:squares}(b). More importantly, asymptotically, this is the best one can do. This is our first main result which we state below.

\begin{theorem}\label{thm:small_t}
Suppose that $t = t(n)$ is such that $t=n^{1+o(1)}$ and $t \ll n \log n$. Let $\beta = \beta(n) \ :=\ n \log n / t$. (In particular, $\beta \to \infty$, as $n \to \infty$.) 
Define 
$$
\ell \ =\  \ell(n) \ :=\  \frac {\log n}{\log \beta - 2 \log \log \beta}  \ \sim\  \frac {\log n}{\log \beta},
$$
and
$$
\epsilon \ =\  \epsilon(n) \ :=\  
\begin{cases}
e^2 (\log \beta) / \beta & \text{ if }\quad \beta \le \log n / \log \log n, \\
15 (\log \ell) / \ell & \text{ if }\quad \log n / \log \log n < \beta \le \log^2 n,\\
e / \ell & \text{ if }\quad \beta > \log^2 n.
\end{cases}
$$
(In particular, $\epsilon = o(1)$, regardless of $\beta$.) 
Finally, let
\begin{eqnarray*}
k &=& k(n) \ :=\ \frac { \log n - 2 \log \log n - t/n }{\log \beta} \ \sim\  \frac {\log n}{\log \beta} \\
k' &=& k'(n) \ :=\  \ell (1 + 4 \epsilon^{1/4})  \ \sim\  \frac {\log n}{\log \beta} \ \sim\  k.
\end{eqnarray*}
 
Then, the following hold.
\begin{itemize}
\item[(a)] There exists a strategy that a.a.s.\ creates $K_k$ at time $t$.
\item[(b)] There is no strategy that a.a.s.\ creates $K_{k'}$ at time $t$.
\end{itemize}
\end{theorem}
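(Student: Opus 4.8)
Write $\lambda := t/n = (\log n)/\beta$ for the expected number of squares received by a fixed vertex, and for $m\in\N$ let $N_m$ denote the (random) number of vertices that receive at least $m$ squares during the $t$ rounds, so $\E N_m = n\,\Prob[\Bin(t,1/n)\ge m]$. The first step, common to both parts, is to pin down the square profile $(N_m)_m$ (this is presumably the content of the auxiliary Lemma~\ref{lem:squares}): Chernoff bounds give $\E N_m \approx n\lambda^m e^{-\lambda}/m!$, which crosses $1$ around $m=(1+o(1))\ell$, so that \whp\ no vertex receives more than $(1+o(1))\ell$ squares, while for every $m\le k$ one has $\E N_m\gg (\log n)^2$---this is exactly what the cushion $-2\log\log n - t/n$ in the definition of $k$ buys---and then a second-moment argument (the counts $N_m$ are sums of negatively associated indicators) gives $N_m\gg k$ \whp\ for every $m\le k$.

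For part (a) I would run an online strategy that greedily grows a clique $C$: tracking, for every $w\notin C$, the number of edges from $w$ into $C$, upon a square arriving on $v$ one attaches the new edge---if $v\in C$---to the vertex $w^\star\notin C$ currently having the most edges into $C$ (ties broken toward the largest running square count), and---if $v\notin C$---to some vertex of $C$ not yet joined to $v$, absorbing $v$ into $C$ once it is joined to all of $C$. The analysis is a square-budget count: a vertex absorbed when $|C|=c$ needs $c$ edges into $C$, essentially all furnished by its own squares (funnelled $C$-squares only help), so the $c$-th vertex absorbed must have collected $\Theta(c)$ squares by then and the total demand is $\Theta(k^2)$; scheduling $C$ to reach size $c$ around round $ct/(2k)$ and using that $N_m\gg k$ for all $m\le k$ (so at each step a suitable not-yet-absorbed vertex joined to all of $C$ exists with room to spare), $C$ reaches size $k$ before round $t$ \whp. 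The point making this possible is $k<\ell$: the heaviest vertices receive enough of their squares early enough.

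For part (b) the difficulty is that counting alone cannot suffice: an omniscient player who saw $u_1,\dots,u_t$ in advance could build $K_{k'}$ for every $k'$ up to roughly $2\ell$ (\whp\ the $k'$ heaviest vertices each receive at least $k'/2$ squares, so a balanced orientation of $K_{k'}$ on them is feasible), so the online restriction must be used to gain the missing factor of $2$. The mechanism is that in any strategy the clique is assembled incrementally: one extracts a nested family of completed sub-cliques $S_2\subseteq\cdots\subseteq S_{k'}=S$ with $S_{i+1}=S_i\cup\{z_{i+1}\}$ and completion rounds $\mu_2<\cdots<\mu_{k'}\le t$, so the player must commit to $z_{i+1}$ as a clique vertex, and route edges of $z_{i+1}$ into $S_i$, before round $\mu_{i+1}$; since a vertex's running square count carries no information about its future square count (the squares before and after any round come from disjoint independent batches), the player cannot tell at commitment time whether its bets will turn out heavy enough, which---together with the \whp\ bound that no vertex exceeds $(1+o(1))\ell$ squares---effectively halves the usable per-vertex square budget and hence the completable clique size. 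Formally, after conditioning on the square-profile bounds, one runs a first-moment estimate over the admissible builds $(S,(z_i),(\mu_i),\text{square-endpoint pattern of each edge})$, in which each prescribed square-round contributes a factor $1/n$ and the timing constraints $\mu_i\le t$ together with the $\ell$-cap limit the number of builds; the total is $o(1)$ precisely when $k'\ge\ell(1+4\epsilon^{1/4})$, and separating the $\log\ell$, $\log\beta$ and $\lambda$ contributions over the three ranges of $\beta$ produces the three cases for $\epsilon$.

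The main obstacle is part (b): a plain union bound over ``a $k'$-set plus an assignment of the $\binom{k'}{2}$ clique-edges to rounds'' (and even its refinement using $d^{\mathrm{sq}}(v)\le(1+o(1))\ell$) is off by a multiplicative factor $2$ in the exponent---it would only yield $k'\gtrsim 2\log n/\log\beta\sim 2\ell$, exactly what the omniscient player achieves---because it implicitly lets each clique-edge be paid for by a square on \emph{either} endpoint. Removing this factor requires encoding that an online player must commit to clique vertices before their final square counts are revealed, i.e.\ organising the argument around the incremental build and the \whp-concentrated square profile rather than around a static witness; controlling the resulting lower-order $\log\ell$, $\log\beta$ and $\lambda$ terms, which are of different sizes in the three regimes of $\beta$, is what forces the somewhat delicate three-case definition of $\epsilon$.
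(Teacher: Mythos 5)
Your part (a) sketch is in the spirit of the paper's proof but unnecessarily complicated. The paper's strategy (Algorithm~\ref{alg:1}) is simpler: maintain the invariant that a vertex with $s$ squares is connected to $v_1,\ldots,v_s$; phase~$i$ ends the moment some non-clique vertex acquires $i$ squares. One then shows directly (Chernoff plus negative correlation) that a.a.s.\ there are at least $k$ vertices with $\ge k$ squares, and the algorithm \emph{deterministically} creates $K_k$ under this event, because a non-clique vertex in phase $i$ always has $\le i-1$ squares. Your ``square-budget'' accounting and the ``scheduling $C$ to reach size $c$ around round $ct/(2k)$'' step are not needed and would require additional justification; the paper avoids them entirely.

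The real gap is in part (b). You correctly diagnose the factor-of-2 issue and correctly observe that it is the online restriction that must be exploited, but your fix---a first-moment estimate over ``admissible builds $(S,(z_i),(\mu_i),\text{square-endpoint pattern})$''---is never carried out, and it is not clear that it closes the gap. In particular, you do not explain what constraint in the union bound actually forbids each edge from being ``paid for'' by a square at either endpoint; a build $(S,(z_i),(\mu_i))$ with a free square-endpoint pattern still has $2^{\binom{k'}{2}}$ choices of pattern, which is exactly the source of the factor of~$2$. Simply asserting that the ``timing constraints'' and ``$\ell$-cap'' kill this is not a proof. The paper's mechanism is concrete: it introduces \emph{rare pairs} (a square landing on a vertex already holding a circle of the clique), proves they are few (Lemma~\ref{lem:rare_pairs}), shows that $O(|S|\ell\sqrt{\epsilon})$ edge deletions destroy all of them (Lemma~\ref{lem:destroying_rare_pairs}), and then makes a clean \emph{deterministic} count: order the clique vertices $v_1,\ldots,v_{k'}$ by the arrival time of the first circle landing on them; with no rare pairs, $v_i$ carries at most $\min(i-1,\ell)$ squares of the clique, so the total number of edges is at most $\binom{\ell}{2}+(k'-\ell)\ell$, contradicting $\binom{k'}{2}$ minus the few deleted edges once $k'\ge\ell(1+4\epsilon^{1/4})$. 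That ordering-by-first-circle step is precisely the place where the online restriction is cashed in and the factor of~$2$ is removed; your plan lacks any analogue of it, so as stated it does not yet yield the theorem.
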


\medskip

Unfortunately, when $t = \Theta(n \log n)$, then our bounds do not asymptotically match but they are at most a multiplicative factor of $2+o(1)$ away from each other, as we will show later (see Figure~\ref{fig:cliques_ratio}). Suppose that $t=t(n) = \gamma n \log n$ for some $\gamma \in (0, \infty)$. We will derive an asymptotic lower bound of $k_1 = \ell = \xi \gamma \log n$, where constant $\xi = \xi(\gamma) \in (1,\infty)$ is defined to be the unique solution to the following equation 
\begin{equation}\label{eq:xi}
1 - \xi \gamma ( \log \xi - 1 ) - \gamma = 0,
\end{equation}
which is equivalent to 
\begin{equation}\label{eq:xi2}
\xi (\log \xi - 1) = \frac {1-\gamma}{\gamma} = \frac {1}{\gamma} - 1 \in (-1, \infty)
\end{equation}
or to
$$
\xi \gamma = \frac {1-\gamma}{\log \xi - 1}.
$$
(The left hand side of~(\ref{eq:xi2}) is a bijection from $(0,\infty)$ to $(-1, \infty)$ which proves the uniqueness of $\xi$.) As we will see in Lemma~\ref{lem:squares}(c) below, $\ell$ defined in Theorem~\ref{thm:small_t} is asymptotic to the maximum number of squares on a single vertex. 

It is easy to see that $\xi$ is a decreasing function of $\gamma$. If $\gamma \to 0$, then $\xi \sim (1/\gamma) / \log(1/\gamma)$, which is consistent with Theorem~\ref{thm:small_t} (applied with $\beta = 1/\gamma \to \infty$). If $\gamma = 1$, then $\xi = e$. More importantly, if $\gamma = \gamma_{\ell} = (2 \log 2-1)^{-1} \approx 2.59$, then $\xi = 2$. Finally, if $\gamma \to \infty$, then $\xi \to 1$. The constant $\gamma_{\ell}$ will play a special role in the lower bound in the statement of our result. We will show another asymptotic lower bound of $k_2 \sim 2 \gamma \log n$ which is stronger than the previous one, provided that $\gamma > \gamma_{\ell}$ (see Figure~\ref{fig:cliques}, right side). 

We will also show two upper bounds. The first one, $k'_2 \sim 2\ell \sim 2 \xi \gamma \log n$ is trivial but is best possible when $\gamma \to \infty$ (recall that $\xi \to 1$ as $\gamma \to \infty$). The second one, $k'_1 \sim (1+2\sqrt{2} (e/\xi)^{1/4}) \ell$, is stronger provided that $\gamma < \gamma_u = (64 e \log 64-1)^{-1} \approx 0.00139$ (see Figure~\ref{fig:cliques}, left side). Indeed, if $\gamma < \gamma_u$, then $\xi > 64 e$ and, as a consequence, $1+2\sqrt{2} (e/\xi)^{1/4} < 2$. This bound is best possible when $\gamma \to 0$.
 
\begin{theorem}\label{thm:large_t}
Suppose that $t = t(n) = \gamma n \log n$ for some $\gamma \in (0, \infty)$. Let $\xi = \xi(\gamma) \in (1,\infty)$ be defined as in~(\ref{eq:xi}). 
Define 
$$\ell \ =\  \ell (n)\ :=\ \frac {1-\gamma}{\log \xi - 1} \log n \ =\ \xi \gamma \log n .$$

Let
\begin{eqnarray*}
k_1 &=& k_1(n)  \ :=\ \frac { (1-\gamma) \log n - 2 \log \log n }{\log \xi - 1} \ \sim\ \ell \ = \  \xi \gamma \log n, \\
k_2 &=& k_2(n)  \ :=\  2 \gamma \log n - 4 \sqrt{ \gamma \log n \log \log n } \ \sim\  2 \gamma \log n, \\
k'_1 &=& k'_1(n) \ :=\  \left( 1+ \frac {3}{\log^{1/2} n} \right) \Big(1+2\sqrt{2} (e/\xi)^{1/4}\Big) \xi \gamma \log n \ \sim\  \Big( 1+2\sqrt{2} (e/\xi)^{1/4} \Big) \xi \gamma \log n, \\
k'_2 &=& k'_2(n)  \ :=\ 2 \xi \gamma \log n + 1 \ \sim\  2 \xi \gamma \log n. 
\end{eqnarray*}
Finally, let
\begin{eqnarray*}
k &=& k(n) \ :=\   \max \{k_1, k_2\}  \ \sim\  \max \{ \xi, 2 \} \gamma \log n, \\
k' &=& k'(n)  \ :=\   \min \{k'_1, k'_2\}  \ \sim\  \min \{ 1+2\sqrt{2} (e/\xi)^{1/4}, 2 \} \xi \gamma \log n.
\end{eqnarray*}

Then, the following hold.
\begin{itemize}
\item[(a)] There exists a strategy that a.a.s.\ creates $K_k$ at time $t$.
\item[(b)] There is no strategy that a.a.s.\ creates $K_{k'}$ at time $t$.
\end{itemize}
\end{theorem}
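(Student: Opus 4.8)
The argument splits into the four pieces $k_1,k_2,k_1',k_2'$: the strategies giving $k_1,k_2$ are lower bounds, the obstructions giving $k_1',k_2'$ are upper bounds, and then $k=\max\{k_1,k_2\}$ and $k'=\min\{k_1',k_2'\}$ as stated. In every case the input is a sharp count of how many vertices of $[n]$ have received a given number of squares after $t=\gamma n\log n$ rounds, which is the content of Lemma~\ref{lem:squares}: writing $\phi(c)=1-\gamma+c-c\log(c/\gamma)$, the number of vertices with at least $c\log n$ squares is a.a.s.\ $n^{\phi(c)+o(1)}$, and by~\eqref{eq:xi} the value $\ell=\xi\gamma\log n$ is exactly where $\phi(c)=0$. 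Consequently a.a.s.\ \emph{no} vertex has $\ell$ squares, a.a.s.\ $n^{\Omega(1)}$ vertices have $(1-\varepsilon)\ell$ squares for any fixed $\varepsilon>0$, and a.a.s.\ $\Theta(\log n)$ vertices have at least $\ell-\Theta(\log\log n/\log\xi)$ squares; the constants in the theorem are calibrated to these three facts.

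For the lower bounds, note first that building $K_{k'}$ on a set $S$ amounts to completing every pair in $S$, and each pair $uv\subseteq S$ is completed by a square landing on $u$ or on $v$, so a strategy may be described by an orientation of $K_{k'}$ that is built up adaptively. For $k_1\sim\ell$ I would grow a clique $C$ online: maintain a threshold $p$ and the rule that a vertex is admitted to $C$ once it has collected $p$ squares sufficiently early in the process (early enough that enough further squares are still to come), and that each subsequent square on a vertex of $C$ is used to complete one of its still-missing edges to $C$ (crucially, a missing pair may be completed from either side, so the capacity available is all of the roughly $\ell^2$ squares that land on $C$, against only $\binom{k'}{2}\approx\ell^2/2$ edges to place). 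One checks, using the square-count estimates and that $\Theta(\log n)$ vertices reach nearly $\ell$ squares, that this succeeds for $k'$ up to $\ell-2\log\log n/(\log\xi-1)=k_1$, the slack absorbing the fluctuations in the relevant counts and arrival times. For $k_2\sim2\gamma\log n$, which beats $k_1$ precisely when $\xi<2$, i.e.\ $\gamma>\gamma_\ell$, one instead works with the $n-o(n)$ \emph{typical} vertices receiving at least $\gamma\log n-2\sqrt{\gamma\log n\log\log n}$ squares: since this is about $(k_2-1)/2$, a \emph{balanced} orientation of $K_{k_2}$ gives each vertex out-degree at most its square count, and the same adaptive-admit-then-complete scheme (now allowing a clique edge to be re-charged to the other endpoint when the current square's owner has met all admitted neighbours) realizes it online, with the $4\sqrt{\gamma\log n\log\log n}$ term covering the admission threshold plus the online slack.

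For the upper bounds, $k_2'$ is immediate: a $K_{k'}$ on $S$ contains $\binom{k'}{2}$ internal edges, each created by a distinct square that landed inside $S$, so the total number of squares ever landing on $S$ is at least $\binom{k'}{2}$; since a.a.s.\ no vertex gets more than $\ell$ squares (Lemma~\ref{lem:squares}(c)), that total is at most $k'\ell$, whence $\binom{k'}{2}\le k'\ell$ and $k'\le2\ell+1=k_2'$. The bound $k_1'$ is more delicate because it must beat $k_2'$ in the small-$\gamma$ regime even though, statically, $K_{k'}$ remains orientation-feasible all the way up to $k'\sim2\ell$; the proof therefore uses both scarcity and timing. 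For a cutoff $\delta<1$ close to $1$, let $H$ be the set of vertices with at least $\delta\ell$ squares; a.a.s.\ $|H|\le n^{\phi(\delta\xi\gamma)+o(1)}$, which stays polylogarithmic once $\delta=1-\Theta(\log\log n/\log\xi)$. Applying the edge-counting inequality to the low part $S\setminus H$ of a would-be clique $S$ forces $|S\setminus H|<2\delta\ell+O(1)$, hence $|S\cap H|>k'-2\delta\ell-O(1)$; on the other hand $S\cap H$ must, during the process, accumulate its square budget — and hence complete its internal edges — inside the narrow terminal window in which enough squares have landed on the clique at all, while only few vertices ever enter $H$. A union bound over the small set $S\cap H$ together with a tail bound for the squares that fall into it during that window, optimized over $\delta$ (where a square root nested inside a square root produces the fourth power), yields $k'\le\bigl(1+\tfrac{3}{\log^{1/2}n}\bigr)\bigl(1+2\sqrt2(e/\xi)^{1/4}\bigr)\xi\gamma\log n=k_1'$.

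The genuinely hard step is $k_1'$ and the matching strategy for $k_1$: because the Hall-type feasibility of $K_{k'}$ survives up to $k'\sim2\ell$, both the obstruction below $k_1'$ and the achievability up to $k_1$ are statements about the \emph{online} nature of the process — about when a vertex can be ``committed'' and then completed given the random arrival times of its squares — rather than about raw square counts, and keeping the error terms small enough that the two sides stay within a factor $2$ forces the arrival-time estimates to be pushed to their limit. Everything else — the two balanced-orientation constructions, the trivial $k_2'$, and the asymptotic bookkeeping turning the four quantities into $k\sim\max\{\xi,2\}\gamma\log n$ and $k'\sim\min\{1+2\sqrt2(e/\xi)^{1/4},2\}\,\xi\gamma\log n$ — is routine once Lemma~\ref{lem:squares} is available, so establishing that lemma's tail estimates uniformly over the relevant range of square counts is the other main task.
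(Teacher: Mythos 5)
Your high-level organization is right (four bounds, $k=\max\{k_1,k_2\}$, $k'=\min\{k_1',k_2'\}$), and your $k_2'$ argument is exactly the paper's trivial averaging argument. Your $k_2$ sketch via a balanced orientation is also essentially what the paper does: it preselects $2\ell+1$ vertices and routes the $j$th square on $v_i$ to $v_{(i+j)\bmod(2\ell+1)}$, then applies Chernoff to each vertex separately — no online ``re-charging'' is needed, the orientation is fixed in advance. Your exponent function $\phi(c)=1-\gamma+c-c\log(c/\gamma)$ with $\phi(\xi\gamma)=0$ is the correct reading of Lemma~\ref{lem:squares}, though the cutoff should be $\delta=1-\Theta(\log\log n/\log n)$, not $\log\log n/\log\xi$.

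The real gap is in your treatment of $k_1'$, which you correctly flag as the hard part but then do not actually bridge. The paper's mechanism is entirely different from your ``high-degree vertices $H$ plus a terminal timing window'' sketch. It introduces \emph{rare pairs}: a pair $(v_{t_i},u_s)$ with $s>t_i$ where a later square $u_s$ lands on the same vertex as an earlier circle $v_{t_i}$. The paper shows that a.a.s.\ no single vertex generates more than $c\ell^2$ rare pairs with $c\sim e/\xi$ (a simple binomial tail estimate), and then proves a purely combinatorial lemma (Lemma~\ref{lem:destroying_rare_pairs}) that one can delete at most $2\sqrt{c}\,\ell|S|$ edges from $G_t[S]$ to destroy all rare pairs. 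Once rare pairs are gone, replaying the remaining edges of $G_t[S]$ in arrival order forces the $i$th vertex to receive a circle to have at most $i-1$ squares (for $i\le\ell$), so the total square count on $S$ is at most $\binom{\ell}{2}+(|S|-\ell)\ell$, and comparing with the surviving edge count yields the contradiction. This is where the fourth root $c^{1/4}$ appears: the ``edges deleted'' term $2\sqrt{c}\,\ell k'$ is balanced against $\binom{k'}{2}-\binom{\ell}{2}$, giving $k'-\ell\sim 2\sqrt{2}\,c^{1/4}\ell$. Your sketch, which counts squares on $S\setminus H$ and then invokes a vague ``narrow terminal window'' for $S\cap H$, never uses the square-after-circle collision structure; I do not see how it would separate $k_1'$ from the trivial $k_2'$ bound, because pure square-count constraints are exactly what $k_2'$ already exhausts.

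Your $k_1$ sketch is also not the paper's argument and is somewhat misleading. The paper's Algorithm~\ref{alg:1} grows the clique one vertex at a time by routing the $j$th square on any non-clique vertex to clique vertex $v_j$; a vertex enters the clique the moment it accumulates as many squares as the current clique size. It is a one-sided greedy scheme that does not exploit completing an edge ``from either side,'' and the success proof is a short second-moment/Chernoff argument showing a.a.s.\ at least $k$ vertices receive at least $k$ squares, which deterministically forces the algorithm to have reached phase $k$. Your ``admit at threshold $p$, complete from either side'' scheme might also work, but as written it invokes ``roughly $\ell^2$ squares land on $C$,'' which is not available: only a vanishing fraction of vertices get close to $\ell$ squares, so the square budget on a random candidate clique is far less than $\ell^2$, and it is the phased admission rule of Algorithm~\ref{alg:1}, not the two-sided capacity, that saves the argument. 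In short: the proposal is correct in outline and for $k_2,k_2'$, but the rare-pairs machinery that actually produces $k_1'$ (and, to a lesser extent, the specific Algorithm~\ref{alg:1} analysis producing $k_1$) is missing.
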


\begin{figure}[h]
     \centering
     \includegraphics[width=0.45\textwidth]{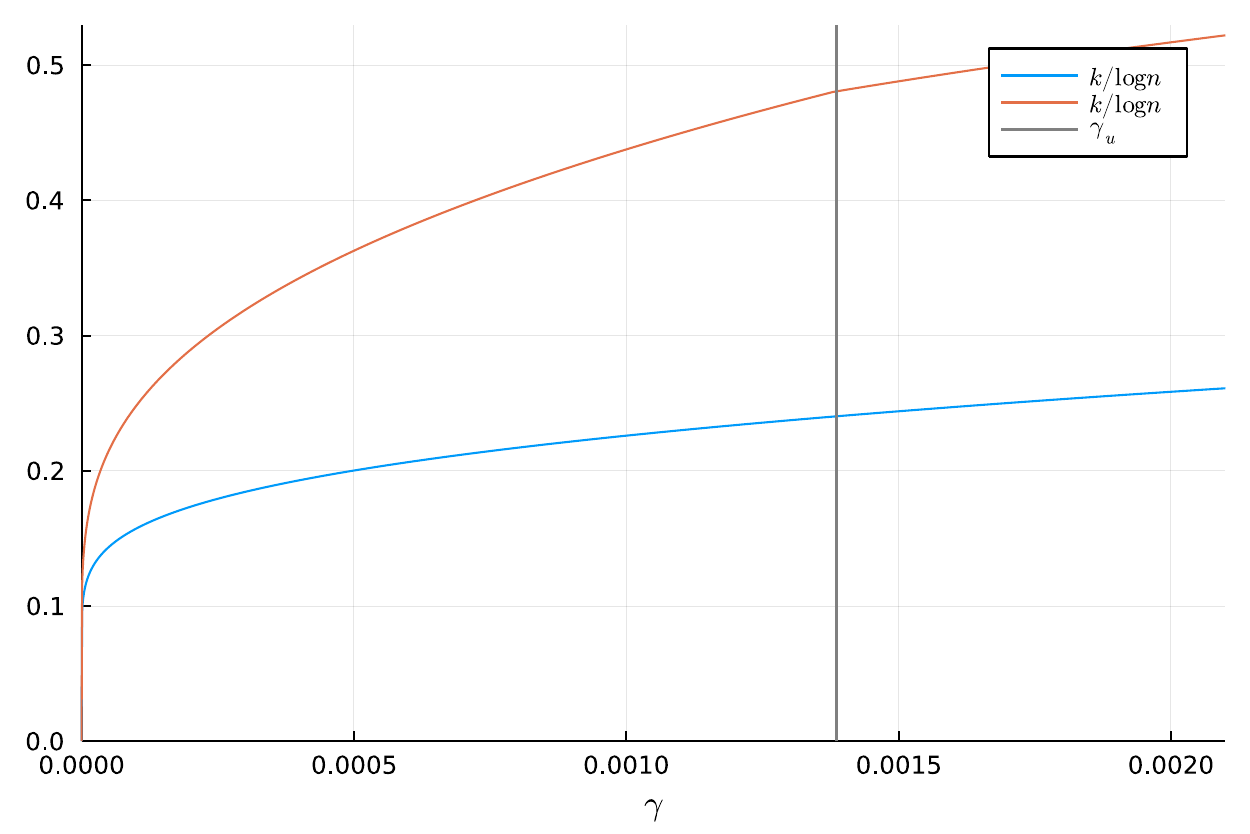}
     \includegraphics[width=0.45\textwidth]{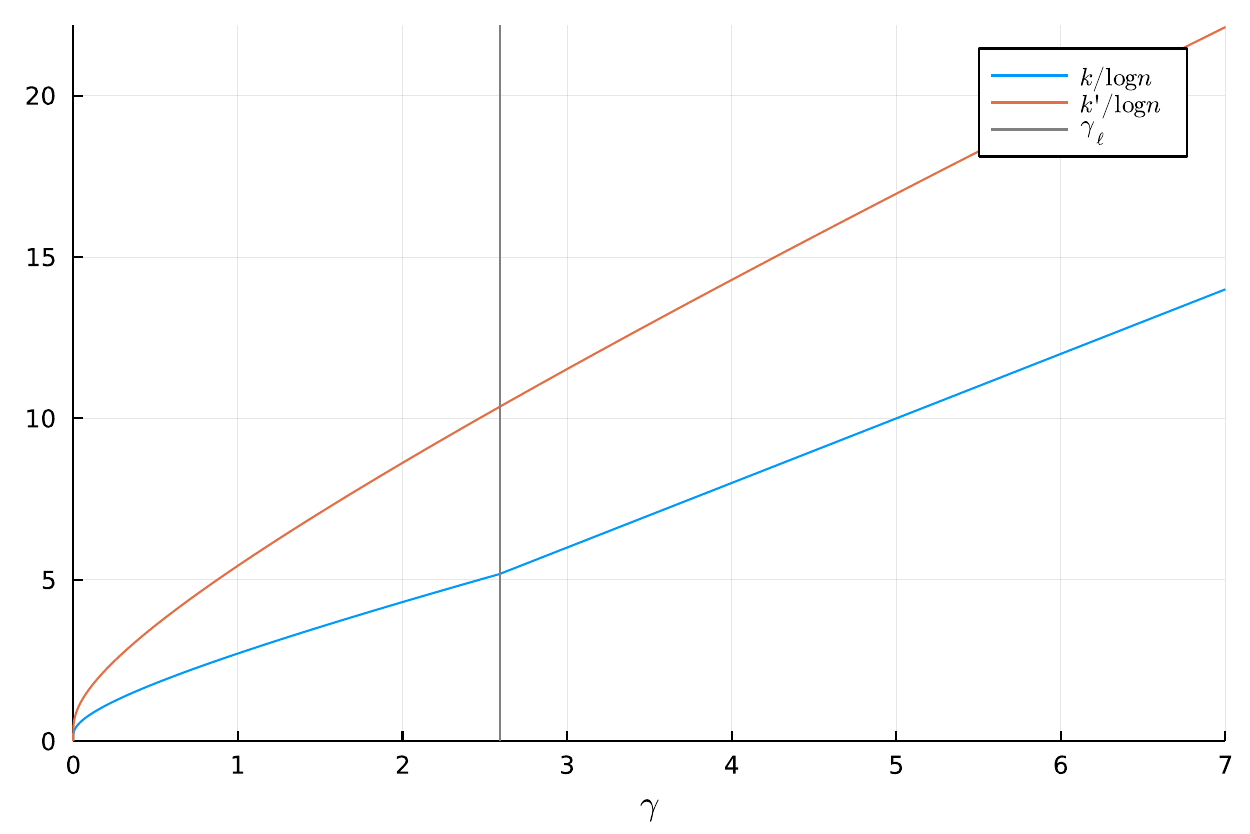}
     \caption{The upper ($k'$) and the lower ($k$) bound for the order of a largest complete graph: small (left figure) and large (right figure) values of $\gamma$.}
     \label{fig:cliques}
\end{figure}

\begin{figure}[h]
     \centering
     \includegraphics[width=0.45\textwidth]{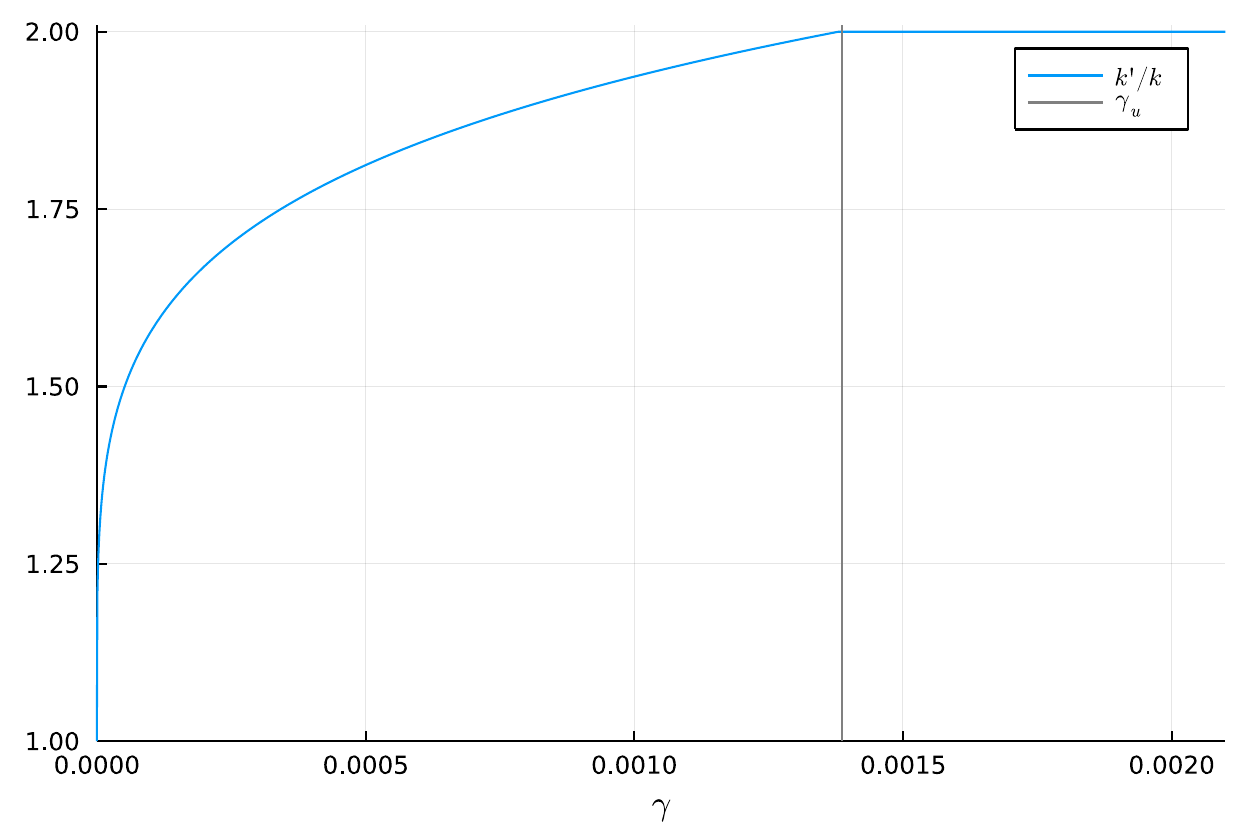}
     \includegraphics[width=0.45\textwidth]{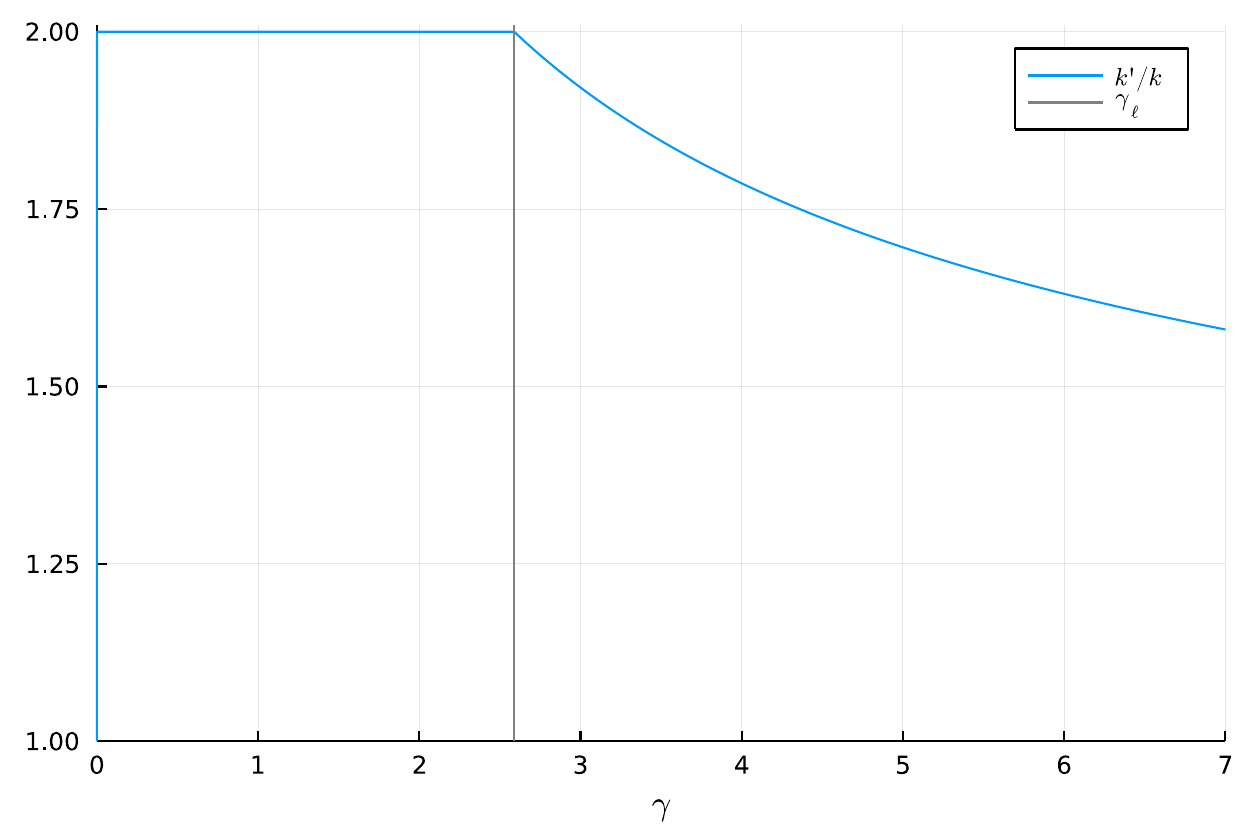}
     \caption{The ratio between the upper ($k'$) and the lower ($k$) bound for the order of a largest complete graph: small (left figure) and large (right figure) values of $\gamma$.}
     \label{fig:cliques_ratio}
\end{figure}

%%%%%%%%%%%%%%%%%%%%%%%%%%%%%%%%%%%%%%%%%%%%%%%%%
\subsection{Main Results---Chromatic Number} 
%%%%%%%%%%%%%%%%%%%%%%%%%%%%%%%%%%%%%%%%%%%%%%%%%

A \textbf{proper colouring} of a graph is a labeling of its vertices with colours such that no two vertices sharing the same edge have the same colour. The smallest number of colours in a proper colouring of a graph $G = (V, E)$ is called its \textbf{chromatic number}, and it is denoted by $\chi(G)$. Since this graph parameter is not well-defined for (multi)graphs with loops, we simply ignore them if they are present in $G_t$. Potential parallel edges do not cause any problems but, of course, can be ignored too.

The second monotone property we investigate in this paper is the property that $\chi(G_t) \ge k$ for some value of $k=k(n)$. Trivially, the player can achieve this property by constructing $K_k$ so earlier results immediately imply the corresponding lower bounds. We will prove matching upper bounds (up to a multiplicative factor of $2+o(1)$) yielding the following three results. Hence, in all regimes, the chromatic number is of order of the clique number.

In the first regime, the ratio between the upper and the lower bound is $2+o(1)$. 

\begin{theorem}\label{thm:chi_small_t}
Suppose that $t = t(n)$ is such that $t=n^{1+o(1)}$ and $t \ll n \log n$. Let $\beta = \beta(n) \ :=\ n \log n / t$. (In particular, $\beta \to \infty$, as $n \to \infty$.) 
Define $\ell = \ell(n)$ and $k = k(n)$ as in Theorem~\ref{thm:small_t}.
%\begin{eqnarray*}
%\ell &=& \ell(n) \ :=\  \frac {\log n}{\log \beta - 2 \log \log \beta}  \ \sim\  \frac {\log n}{\log \beta}, \\
%k &=& k(n) \ :=\ \frac { \log n - 2 \log \log n - t/n }{\log \beta} \ \sim\  \frac {\log n}{\log \beta} \ \sim \ \ell.
%\end{eqnarray*}
 
Then, the following hold.
\begin{itemize}
\item[(a)] There exists a strategy that a.a.s.\ creates $G_t$ such that $\chi(G_t) \ge k$.
\item[(b)] There is no strategy that a.a.s.\ creates $G_t$ such that $\chi(G_t) \ge 2 \ell + 2 \sim 2k$.
\end{itemize}
\end{theorem}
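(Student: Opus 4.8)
The plan for part~(a) needs essentially no new argument: every graph has chromatic number at least its clique number, so any strategy that a.a.s.\ builds $K_k$ by time $t$ also a.a.s.\ produces $G_t$ with $\chi(G_t)\ge k$, and Theorem~\ref{thm:small_t}(a) supplies such a strategy.

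For part~(b) I would bound the degeneracy of $G_t$ from above \emph{uniformly over all strategies} and then invoke the elementary inequality $\chi(G)\le d(G)+1$, where $d(G)$ denotes the degeneracy. The key point is that the squares $u_1,\dots,u_t$ are sampled independently of the player, so for \emph{any} strategy the number of squares received by a vertex $x$ is the same random variable, namely $|\{s\le t:\ u_s=x\}|$; by Lemma~\ref{lem:squares}, a.a.s.\ \emph{every} vertex receives at most $\ell$ squares. I would condition on this event, which depends only on $(u_s)_{s\le t}$.

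Now fix an arbitrary strategy and a nonempty $S\subseteq[n]$. Since every edge of $G_t$ joins a square to a circle, every edge of $G_t[S]$ has its square-endpoint in $S$, so the number of edges of $G_t[S]$ (as a simple graph; loops are ignored for $\chi$, and parallel edges only help) is at most the total number of squares received by vertices of $S$:
\begin{equation*}
e(G_t[S]) \ \le\ \sum_{x\in S} |\{s\le t:\ u_s=x\}| \ \le\ \ell\,|S|.
\end{equation*}
Hence $G_t[S]$ has average degree at most $2\ell$ and so contains a vertex of degree at most $2\ell$; as $S$ was arbitrary, $d(G_t)\le 2\ell$ and therefore $\chi(G_t)\le 2\ell+1<2\ell+2$. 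Since this holds a.a.s.\ for every strategy, no strategy can a.a.s.\ force $\chi(G_t)\ge 2\ell+2$; the asymptotic identity $2\ell+2\sim 2k$ is just $\ell\sim k\sim\log n/\log\beta$.

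The only genuinely probabilistic ingredient is the uniform ``at most $\ell$ squares per vertex'' bound, which is exactly what Lemma~\ref{lem:squares} provides; everything after it is a deterministic degeneracy / greedy-colouring argument that never inspects the strategy, which is precisely why one bound suffices for all strategies simultaneously. The points that need a little care are the multigraph conventions and checking that the $\ell$ in the square bound is literally the $\ell$ of the statement, i.e.\ that $(\text{maximum number of squares on a vertex})\le\ell$ holds on the nose and not merely asymptotically.
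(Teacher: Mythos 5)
Your proposal is correct and follows essentially the same route as the paper: part~(a) via the clique lower bound from Theorem~\ref{thm:small_t}(a), and part~(b) via Lemma~\ref{lem:squares}(b) to bound squares per vertex by $\ell$ a.a.s., then the observation that every edge of $G_t[S]$ has its square endpoint in $S$ to get $e(G_t[S])\le\ell|S|$, hence degeneracy at most $2\ell$ and $\chi(G_t)\le 2\ell+1$.
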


In the second regime, the ratio between the upper and the lower bound is at most $2+o(1)$ (see Figure~\ref{fig:chi}). 

\begin{theorem}\label{thm:chi_large_t}
Suppose that $t = t(n) = \gamma n \log n$ for some $\gamma \in (0, \infty)$. Let $\xi = \xi(\gamma) \in (1,\infty)$ be defined as in~(\ref{eq:xi}). 
Define $\ell = \ell(n)$ and $k = k(n)$ as in Theorem~\ref{thm:large_t}.
%\begin{eqnarray*}
%\ell &=&  \ell (n)\ :=\ \frac {1-\gamma}{\log \xi - 1} \log n \ =\ \xi \gamma \log n,\\
%k &=& k(n) \ \sim\  \max \{ \xi, 2 \} \gamma \log n.
%\end{eqnarray*}

Then, the following hold.
\begin{itemize}
\item[(a)] There exists a strategy that a.a.s.\ creates $G_t$ such that $\chi(G_t) \ge k$.
\item[(b)] There is no strategy that a.a.s.\ creates $G_t$ such that $\chi(G_t) \ge 2 \ell + 2  = \Theta(k)$.
\end{itemize}
\end{theorem}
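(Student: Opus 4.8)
I would treat the two parts separately, since part~(a) comes for free. For~(a), Theorem~\ref{thm:large_t}(a) already supplies a strategy that a.a.s.\ produces a $G_t$ containing $K_k$, and since $\chi(K_k)=k$ the same strategy gives $\chi(G_t)\ge k$ a.a.s.; no further work is needed.

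For~(b) the plan is to establish the stronger statement that, \emph{whatever strategy} $\scr{S}$ the player follows, a.a.s.\ $\chi(G_t)\le 2\ell+1$; as $2\ell+1<2\ell+2$, this rules out any strategy forcing $\chi(G_t)\ge 2\ell+2$ a.a.s. The first ingredient is that the squares $u_1,\ldots,u_t$ are drawn i.i.d.\ uniformly from $[n]$ and independently of all of the player's choices, so the maximum number $\lmax$ of squares landing on a single vertex has a distribution that does not depend on $\scr{S}$; by Lemma~\ref{lem:squares}(c) (Chernoff's bound plus a union bound over the $n$ vertices) a.a.s.\ $\lmax\le\ell$. The second ingredient is the classical bound $\chi\le 2d+1$ for a graph admitting an orientation all of whose out-degrees are at most $d$: I would orient each edge $(u_s,v_s)$ of $G_t$ from its square towards its circle, discard loops, and keep a single arc per edge of the underlying simple graph $G$; then every vertex of $G$ has out-degree at most its number of squares, hence at most $\lmax$, so $G$ is $2\lmax$-degenerate and a.a.s.\ $\chi(G_t)\le 2\lmax+1\le 2\ell+1$, uniformly over strategies.

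The step I expect to be the crux is the a.a.s.\ bound $\lmax\le\ell$. Because $\ell=\xi\gamma\log n$ is calibrated to sit exactly at the first-moment threshold for the number of squares on a vertex---this is precisely the role of the defining equation~\eqref{eq:xi} for $\xi$---even an additive constant of slack in the estimate for $\lmax$ would weaken the conclusion to $2\ell+3$, so one needs the sharp balls-in-bins estimate of Lemma~\ref{lem:squares} rather than the merely qualitative $\lmax\sim\ell$. Everything else is routine: the out-degree/degeneracy inequality, the convention that $\chi(G_t)$ means the chromatic number of the underlying simple loopless graph, and the trivial logical passage from ``a.a.s.\ for every strategy'' to ``no strategy succeeds a.a.s.''. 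This is essentially the same argument used for Theorem~\ref{thm:chi_small_t}(b) in the neighbouring regime $t\ll n\log n$, with the value of $\ell$ appropriate to $t=\gamma n\log n$.
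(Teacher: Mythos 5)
Your proposal is correct and matches the paper's proof almost exactly: part (a) is immediate from Theorem~\ref{thm:large_t}(a), and for part (b) both you and the paper invoke Lemma~\ref{lem:squares}(c) to get $\max_v X_v\le\ell$ a.a.s.\ and then deduce $2\ell$-degeneracy (hence $\chi(G_t)\le 2\ell+1$) from the fact that every edge has a unique square endpoint. The only cosmetic difference is that you phrase the degeneracy bound via an orientation of the edges from square to circle with out-degrees at most $\ell$, whereas the paper simply counts edges in $G_t[S]$ directly; these are the same argument.
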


\begin{figure}[h]
     \centering
     \includegraphics[width=0.45\textwidth]{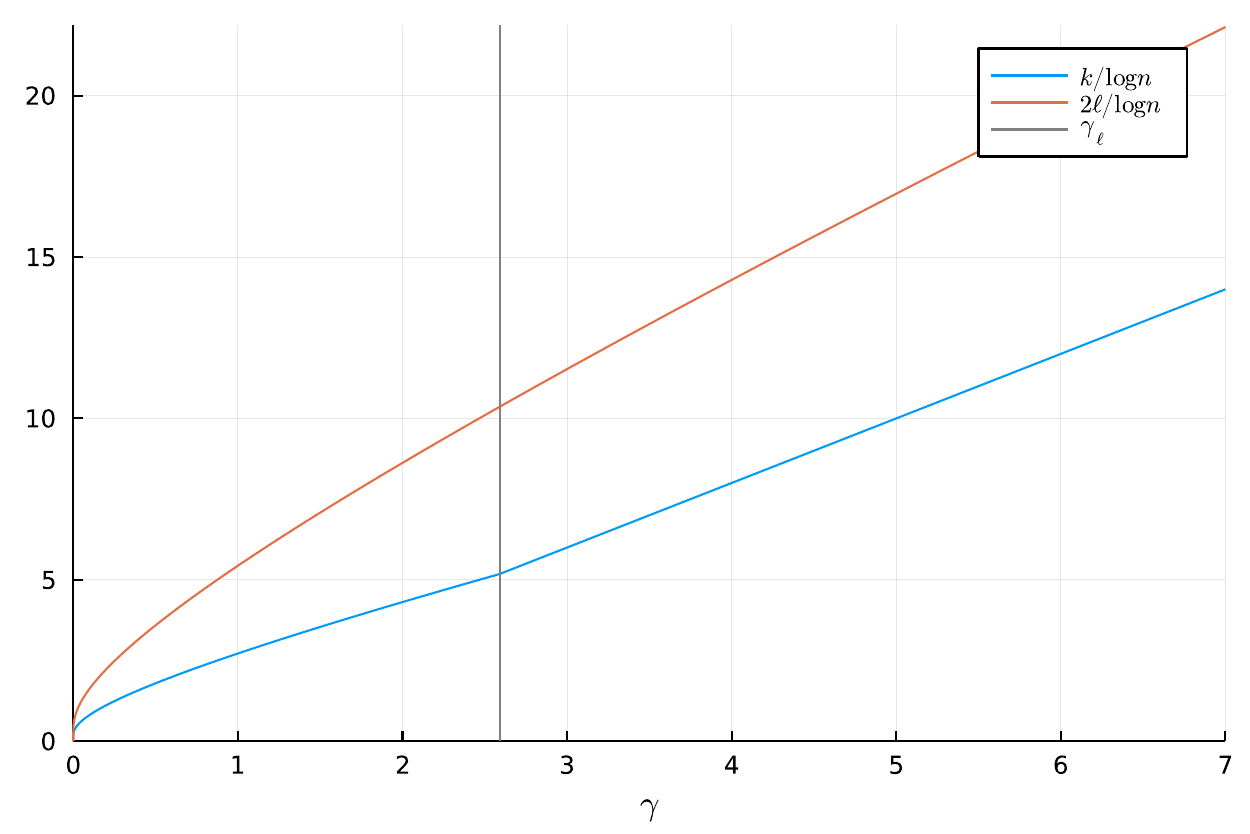}
     \includegraphics[width=0.45\textwidth]{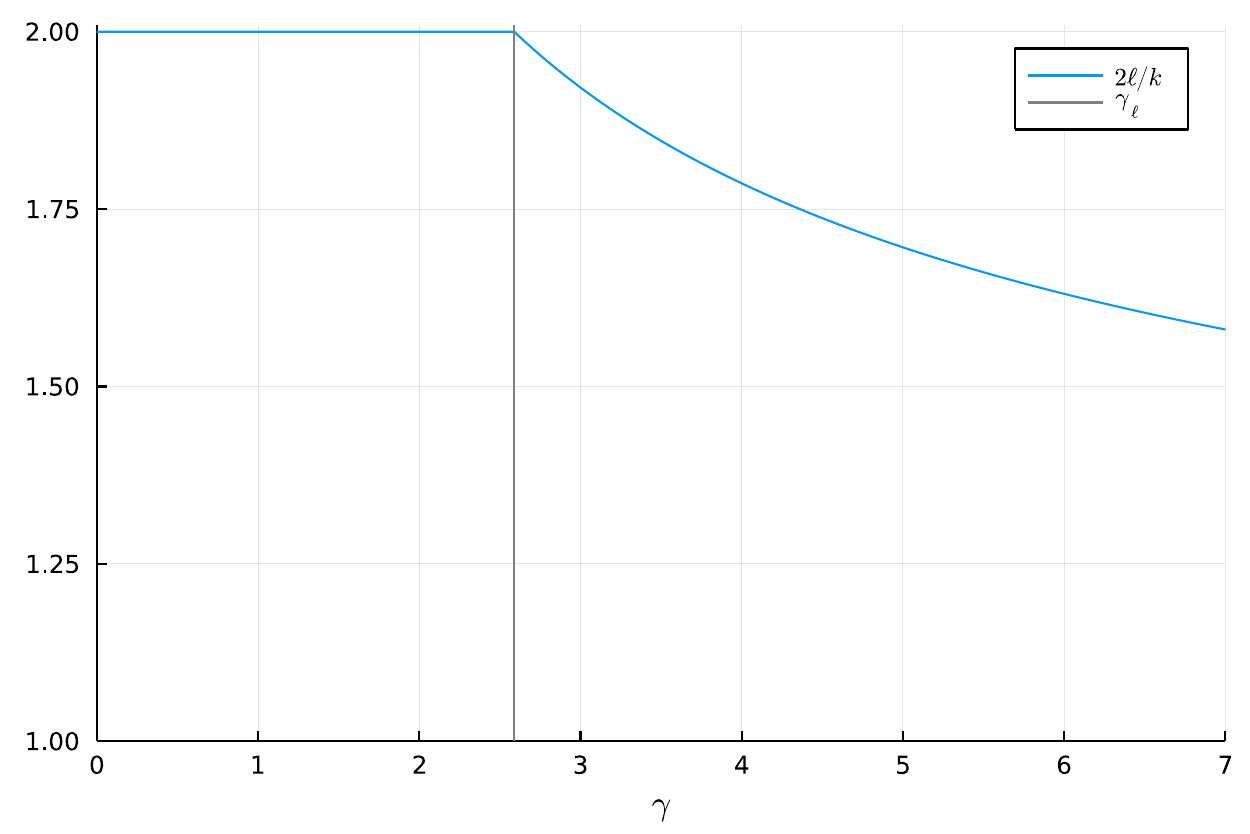}
     \caption{The upper ($2\ell+2$) and the lower ($k$) bound for $\chi(G_t)$ (left figure) as well as the ratio between the two (right figure).}
     \label{fig:chi}
\end{figure}

Note that if $\gamma \to \infty$ in the above result, then $\xi \to 1$ and so both bounds are asymptotically tight: $\chi(G_t) \sim 2 \gamma \log n$.

\begin{theorem}\label{thm:chi_very_large_t}
Suppose that $t = t(n) \ge \omega n \log n$, where $\omega=\omega(n)$ is any function that tends to infinity as $n \to \infty$. 
Define $k=k(n)$ and $k'=k'(n)$ as in Observation~\ref{obs:cliques2}.
%\begin{eqnarray*}
%k &=& k(n) \ := \  \frac {2t}{n} \left( 1-\omega^{-1/3} \right) \ \sim\ \frac {2t}{n}\\ 
%k' &=&  k'(n)\ :=\  \frac {2t}{n} \left( 1+\omega^{-1/3} \right) \ \sim\ \frac {2t}{n}  \ \sim\  k.
%\end{eqnarray*} 

Then, the following hold.
\begin{itemize}
\item[(a)] There exists a strategy that a.a.s.\ creates $G_t$ such that $\chi(G_t) \ge \min\{k,n\}$. 
\item[(b)] There is no strategy that a.a.s.\ creates $G_t$ such that $\chi(G_t) \ge k' \sim k$.
\end{itemize}
\end{theorem}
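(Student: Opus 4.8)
The plan is to dispense with part (a) immediately: by Observation~\ref{obs:cliques2}(a) there is a strategy that a.a.s.\ builds a copy of $K_{\min\{k,n\}}$ inside $G_t$, and whenever $K_{\min\{k,n\}}\subseteq H$ we have $\chi(H)\ge\chi(K_{\min\{k,n\}})=\min\{k,n\}$, so the very same strategy witnesses (a). No further work is needed there.

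For part (b) I would argue through degeneracy. Recall the standard bound $\chi(G)\le 1+\mathrm{degeneracy}(G)$; equivalently, if $\chi(G_t)\ge k'$ then $G_t$ contains a subgraph $H$ with $\delta(H)\ge k'-1$, and writing $S=V(H)$ and $m=|S|$ this forces $e(G_t[S])\ge e(H)\ge (k'-1)m/2$. Hence it suffices to prove that a.a.s., \emph{for every strategy}, no set $S$ induces that many edges. The point that makes the bound uniform over all strategies is the deterministic observation that an edge $(u_i,v_i)$ of $G_t$ lies inside $S$ only if its square satisfies $u_i\in S$; therefore $e(G_t[S])\le |\{i\in[t]:u_i\in S\}|$ for every realization and every strategy, and the right-hand side has distribution $\Bin(t,|S|/n)$ regardless of the player's choices.

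Now set $\delta:=\tfrac12\omega^{-1/3}$. For a fixed $S$ with $|S|=m$, Chernoff's bound gives
\[
\Prob\Big[\,e(G_t[S])>\tfrac{tm}{n}(1+\delta)\Big]\le \Prob\big[\Bin(t,m/n)>(1+\delta)\tfrac{tm}{n}\big]\le \exp\!\big(-\delta^2 tm/(3n)\big)\le \exp\!\big(-\tfrac{1}{12}\omega^{1/3}m\log n\big),
\]
using $t\ge\omega n\log n$. Taking a union bound over the at most $\binom nm\le e^{m\log n}$ choices of $S$ of size $m$ and then summing over $m\ge1$ yields a total failure probability $O\big(e^{-\Omega(\omega^{1/3}\log n)}\big)=o(1)$, since $\tfrac{1}{12}\omega^{1/3}\gg 1$. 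Consequently, a.a.s.\ every $S\subseteq[n]$ satisfies $e(G_t[S])\le \tfrac{t|S|}{n}(1+\tfrac12\omega^{-1/3})$.

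It remains to check the arithmetic. On that event, any subgraph $H$ with $\delta(H)\ge k'-1$ on $m=|V(H)|$ vertices would satisfy
\[
\tfrac{tm}{n}(1+\omega^{-1/3})-\tfrac m2=(k'-1)\tfrac m2\le e(G_t[V(H)])\le \tfrac{tm}{n}\big(1+\tfrac12\omega^{-1/3}\big),
\]
hence $\tfrac{tm}{n}\cdot\tfrac12\omega^{-1/3}\le \tfrac m2$, i.e.\ $t\le n\omega^{1/3}$, contradicting $t\ge\omega n\log n$ for $n$ large. So a.a.s.\ $G_t$ has no subgraph of minimum degree at least $k'-1$, giving $\chi(G_t)\le k'-1<k'$ no matter which strategy is used, which is exactly (b). I do not expect a genuine obstacle here; the only care needed is that the $\omega^{-1/3}$ slack built into $k'$ comfortably dominates both the Chernoff deviation $\delta$ (which must itself beat $\omega^{-1/2}$ to survive the union bound over all $2^n$ subsets) and the lower-order $-1$ correction in $k'-1$, and all of this holds precisely because $t\ge\omega n\log n$.
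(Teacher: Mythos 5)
Your part~(a) is exactly the paper's argument: Observation~\ref{obs:cliques2}(a) plus the trivial fact that $\chi(K_m)=m$.

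For part~(b), the paper is in fact silent: Section~\ref{sec:chi} explicitly proves only Theorems~\ref{thm:chi_small_t}(b) and~\ref{thm:chi_large_t}(b), and the tool it invokes, Lemma~\ref{lem:squares}, is stated only for $t=\bigo(n\log n)$, so it does not literally cover the regime $t\ge\omega n\log n$. The intended argument the paper is gesturing at is the one used for the other two theorems: bound $\max_v X_v$ (the maximum number of squares on a single vertex) by $\tfrac{t}{n}\bigl(1+\bigo(\sqrt{\log n/(t/n)})\bigr)=\tfrac{t}{n}(1+\bigo(\omega^{-1/2}))$ via Chernoff and a union bound over the $n$ vertices, then observe that $e(G_t[S])\le |S|\cdot\max_v X_v$ for every $S$, hence $G_t$ is $2\max_v X_v$-degenerate and $\chi(G_t)\le 2\max_v X_v+1<k'$. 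Your proof reaches the same degeneracy bound but by a genuinely different route: you Chernoff-bound the number of squares landing in each set $S$ directly as a $\Bin(t,|S|/n)$ variable and union-bound over all $2^n$ subsets, which is heavier (union over exponentially many events rather than $n$ events) but still comfortably works because the exponent $\omega^{1/3}m\log n$ dominates the entropy $m\log n$. Your key observation that $e(G_t[S])$ is deterministically at most the number of squares falling in $S$, independent of strategy, is exactly the same slicing the paper uses. The arithmetic check at the end is correct: the $\omega^{-1/3}$ slack in $k'$ absorbs both the $\tfrac12\omega^{-1/3}$ Chernoff deviation and the $-1$ correction, precisely because $t/n\ge\omega\log n\to\infty$. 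In short, part~(b) is correct and fills a gap the paper leaves implicit; you could simplify slightly by unioning over vertices instead of over all subsets, as the paper's proofs of the companion theorems do.
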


%%%%%%%%%%%%%%%%%%%%%%%%%%%%%%%%%%%%%%%%%%%%%%%%%
\subsection{Main Results---Independent Sets} 
%%%%%%%%%%%%%%%%%%%%%%%%%%%%%%%%%%%%%%%%%%%%%%%%%

An \textbf{independent set} is a set of vertices in a graph, no two of which are adjacent. The \textbf{independence number} $\alpha(G)$ of a graph $G=(V,E)$ is the cardinality of a maximum independent set of vertices. As for the chromatic number, we simply ignore loops if they are present in $G_t$.

The last monotone property we investigate in this paper is the property that $\alpha(G_t) \le k$ for a given value of $k=k(n)$. We have a good understanding of the independence number of $G_t$ when the average degree tends to infinity together with $n$.

\begin{theorem}\label{thm:alpha_large_t}
Suppose that $t = t(n)$ is such that $n \ll t \ll n^2$. Let $\lambda = \lambda(n) = t/n$. 

Then, the following hold.
\begin{itemize}
\item[(a)] There exists a strategy that a.a.s.\ creates $G_t$ such that 
$$\alpha(G_t) \ \le \ \frac {n}{2 \lambda} \left( 1 + \bigo(\sqrt{\log \lambda / \lambda}) + \bigo(\lambda/n) \right) \ \sim \ \frac {n}{2 \lambda}.$$
\item[(b)] There is no strategy that a.a.s.\ creates $G_t$ such that 
$$\alpha(G_t) \ < \ \frac {n}{2 \lambda + 1}.$$
\end{itemize}
\end{theorem}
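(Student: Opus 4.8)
\noindent\emph{Proof strategy.} We treat the lower bound (b) first, as it is deterministic. After $t$ rounds the (multi)graph $G_t$ has exactly $t$ edges, so its underlying simple graph (loops ignored) has at most $t$ edges and average degree $\bar d \le 2t/n = 2\lambda$. By the Caro--Wei inequality $\alpha(G_t) \ge \sum_{v}\tfrac{1}{d(v)+1}$, and since $x \mapsto 1/(x+1)$ is convex, Jensen's inequality gives $\alpha(G_t) \ge n/(\bar d+1) \ge n/(2\lambda+1)$. This holds for every strategy and every outcome, so no strategy can a.a.s.\ (indeed, ever) achieve $\alpha(G_t) < n/(2\lambda+1)$; equivalently, one may apply Tur\'an's theorem to the complement of $G_t$.

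For the upper bound (a) the plan is to force $G_t$ to contain a spanning subgraph that is a disjoint union of cliques, each of size $s := 2\lambda(1-\delta)$ with a slack parameter $\delta := \Theta(\sqrt{\log\lambda/\lambda}) = o(1)$, except for a set $R$ of ``leftover'' vertices with $|R| = \bigo\bigl(\tfrac{n}{\lambda}\sqrt{\log\lambda/\lambda}\bigr)$. If this is done, the clique cover number --- and hence $\alpha(G_t)$ --- is at most $\lceil (n-|R|)/s\rceil + |R| = \tfrac{n}{2\lambda}\bigl(1+\bigo(\delta)\bigr) + \bigo(1)$, which is the claimed bound (the additive $\bigo(1)$ from the ceiling being the source of the $\bigo(\lambda/n)$ term). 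The mechanism is that the player maintains a partition of (most of) $[n]$ into blocks and, whenever a square lands on a vertex $x$, pairs it with a current non-neighbour of $x$ inside $x$'s block (say a uniformly random such vertex); a block is finished once it induces a clique. The offline benchmark is the standard max-flow / Hall-type criterion (cf.\ Hakimi's orientation theorem): a block $B$ can be completed into a clique using only the squares that landed on its vertices if and only if every $S\subseteq B$ collectively received at least $\binom{|S|}{2}$ squares. So we must show (i) that (almost) every block satisfies this with room to spare, w.h.p., and (ii) that the online greedy pairing actually realises such an assignment.

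When $\lambda \gg \log n$, step (i) is easy: by the Chernoff bound~(\ref{chern1})--(\ref{chern}) and a union bound over the $n$ vertices, every vertex receives $(1\pm\Theta(\sqrt{\log n/\lambda}))\lambda \ge s/2$ squares, so the feasibility inequality holds comfortably inside every block; step (ii) then follows from a standard supermartingale / differential-equation-method analysis of the greedy online pairing within each block, exploiting the slack (alternatively, one may invoke the fast graph-building strategy of~\cite{beneliezer2020fast}, since the target union of cliques has maximum degree $s-1 \gg \log n$).

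The hard case --- and the main obstacle --- is $\lambda \to \infty$ with $\lambda = \bigo(\log n)$. Here the per-vertex square counts are essentially Poisson$(\lambda)$ and far from concentrated, so a fixed balanced partition into blocks of size $\sim 2\lambda$ breaks down: an unbounded (as $n\to\infty$) number of blocks will contain two or more square-free vertices (making them infeasible by the criterion above) or receive too few squares overall, and a union bound over the $\Theta(n/\lambda)$ blocks is too weak to rule this out. The remedy is to assign vertices to cliques \emph{adaptively} as their squares arrive, building each clique around a configuration of square counts that satisfies the feasibility criterion --- pairing square-rich vertices with square-poor ones, keeping a small reservoir of yet-unassigned vertices, and closing a clique once it reaches size $s$ and is complete --- so that only $\bigo\bigl(\tfrac{n}{\lambda}\sqrt{\log\lambda/\lambda}\bigr)$ vertices are ever left unplaced; those form the set $R$. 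Designing this online scheduling so that it essentially matches the offline flow bound with negligible wasted squares, and controlling the number of failures within the $\sqrt{\log\lambda/\lambda}$ budget via concentration, is the technical heart of the argument.
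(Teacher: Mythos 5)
Your part (b) matches the paper's proof exactly: both invoke the Caro--Wei bound (Observation~\ref{obs:ind_set}) with the deterministic fact that the average degree is $2t/n = 2\lambda$, so no probabilistic argument is needed.

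For part (a) the high-level plan --- partition $[n]$ into blocks of size $\sim 2\lambda(1-\delta)$ with $\delta = \Theta(\sqrt{\log\lambda/\lambda})$ and force each block to become a clique --- is the paper's plan too. But you then misdiagnose where the difficulty lies, and the remedy you propose is both unnecessary and left unproved, so the proposal has a genuine gap in exactly the regime you call ``the main obstacle.'' You argue that for $\lambda = \bigo(\log n)$ a fixed balanced partition ``breaks down'' because a union bound over the $\Theta(n/\lambda)$ blocks cannot guarantee that all of them succeed, and you conclude that an adaptive online scheduling of vertices into cliques is required (with the details admittedly deferred). The paper shows this is not needed: one does \emph{not} need all blocks to succeed. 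Using the fixed cyclic pairing of Algorithm~\ref{alg:2} within each block, a block becomes a clique iff every vertex in it receives at least $\ell = \lambda - \sqrt{5\lambda\log\lambda}$ squares, an event that fails with probability at most $\lambda^{-5/2}$ per vertex by Chernoff. The expected number of failed blocks is therefore $\bigo(n/\lambda^{5/2})$, and by Markov the actual number is a.a.s.\ $\bigo(n\sqrt{\log\lambda}/\lambda^{5/2})$. An independent set has at most one vertex in each successful block and, trivially, at most $k \sim 2\lambda$ vertices in each failed one; since $k \cdot \bigo(n\sqrt{\log\lambda}/\lambda^{5/2}) = \bigo(n\sqrt{\log\lambda}/\lambda^{3/2}) = \frac{n}{2\lambda}\cdot\bigo(\sqrt{\log\lambda/\lambda})$, this failure budget is absorbed into the error term, with the $\bigo(\lambda/n)$ coming from the ceiling $\lceil n/k\rceil$. (The paper splits $\lambda \gg n^{2/5}$, where all blocks succeed a.a.s., from $1 \ll \lambda = \bigo(n^{2/5})$, where failures occur but are controlled this way.)

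Two further points. First, the Hall-type feasibility criterion you invoke (every $S\subseteq B$ receives $\ge\binom{|S|}{2}$ squares) is relevant only for an \emph{adaptive} pairing within a block; the paper's Algorithm~\ref{alg:2} is a fixed cyclic pairing whose success condition is simply that every vertex gets $\ge\ell$ squares, so the feasibility analysis reduces to a single per-vertex Chernoff bound and avoids this machinery. Second, even your ``easy case'' $\lambda \gg \log n$ as written uses a union bound over vertices and therefore gets slack $\Theta(\sqrt{\log n/\lambda})$, not the claimed $\Theta(\sqrt{\log\lambda/\lambda})$; these agree only when $\lambda$ is polynomial in $n$. The failure-budget argument above yields the correct $\sqrt{\log\lambda/\lambda}$ slack uniformly for all $\lambda\to\infty$, so it is both simpler and sharper than what you sketch.
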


Suppose now that the average degree is of the same order as the order of a graph, that is, $t=t(n) \sim c n^2$. In this case, we determine the independence number precisely unless $c = 1 / (2q)$ for some $q \in \N$. If $c = 1 / (2q)$ for some $q \in \N$, then the upper and the lower bounds may be off by one.

\begin{theorem}\label{thm:alpha_very_large_t}
Suppose that $t = t(n) \sim c n^2$ for some constant $c \in (0, 1]$. Let $\lambda = \lambda(n) = t/n \sim cn$. 

Then, the following hold.
\begin{itemize}
\item[(a)] There exists a strategy that a.a.s.\ creates $G_t$ such that 
$$\alpha(G_t) \ \le \ \left\lceil \frac {n}{2 \lambda} \left( 1 + \bigo(\sqrt{\log \lambda / \lambda}) \right) \right\rceil.$$
\item[(b)] There is no strategy that a.a.s.\ creates $G_t$ such that 
$$\alpha(G_t) \ < \ \frac {n}{2 \lambda + 1}.$$
\end{itemize}
\end{theorem}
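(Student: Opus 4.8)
The plan is to dispose of part~(b) exactly as part~(b) of Theorem~\ref{thm:alpha_large_t}, and to prove part~(a) by a clique-partition strategy, with the rounding kept inside a single ceiling so the estimate stays sharp when $n/(2\lambda)$ is a constant.

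For part~(b): after $t$ rounds the (multi)graph $G_t$ has at most $t$ edges, hence at most $t$ distinct edges, so the simple graph underlying $G_t$ (ignoring loops) has average degree at most $2t/n=2\lambda$. By the Caro--Wei / Tur\'{a}n bound, $\alpha(G_t)\ge \sum_{v} 1/(d(v)+1)\ge n/(2\lambda+1)$ for \emph{every} outcome of the process under \emph{every} strategy; thus no strategy produces $\alpha(G_t)<n/(2\lambda+1)$, let alone a.a.s.

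For part~(a): first I would record, via a Chernoff bound and a union bound over the $n$ vertices, that for a suitable absolute constant $C$ (e.g.\ $C=2$) a.a.s.\ every vertex is covered by at least $\ell_0:=\lfloor \lambda - C\sqrt{\lambda\log n}\rfloor$ squares during the first $t$ rounds; here I use $\lambda\sim cn\gg\log n$ and $\log n\sim\log\lambda$. Put $s:=2\ell_0+1$ and $q:=\lceil n/s\rceil$, and fix a partition of $[n]$ into $q$ groups of as-equal-as-possible sizes --- each then has size at most $s$, since $q\ge n/s$ --- and endow each group $W$ with a cyclic order $w_0,w_1,\dots$. The strategy is the round-robin construction from the discussion preceding Observation~\ref{obs:cliques2}, applied within each group: when a square lands on a vertex $w_i$ of a group $W$ for the $j$-th time with $1\le j\le \lceil(|W|-1)/2\rceil$, pair it with $w_{(i+j)\bmod |W|}$ (or, if that edge is already present, with any vertex of $W$ not yet adjacent to $w_i$); pair any further square on $w_i$ arbitrarily inside $W$. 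As in that construction, once every vertex of $W$ has been covered $\lceil(|W|-1)/2\rceil\le\ell_0$ times, $G_t[W]$ is the complete graph on $W$.

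Since a.a.s.\ every vertex receives at least $\ell_0$ squares within $t$ rounds, a.a.s.\ $G_t$ contains a spanning subgraph that is a disjoint union of $q$ cliques, and therefore $\alpha(G_t)\le q=\lceil n/s\rceil$. Finally,
$$
\frac{n}{s} \ \le\ \frac{n}{2\ell_0} \ =\ \frac{n}{2\lambda}\cdot\frac{1}{1-\bigo(\sqrt{\log n/\lambda})} \ =\ \frac{n}{2\lambda}\Big(1+\bigo\big(\sqrt{\log\lambda/\lambda}\big)\Big),
$$
so monotonicity of $\lceil\cdot\rceil$ yields $\alpha(G_t)\le \lceil \frac{n}{2\lambda}(1+\bigo(\sqrt{\log\lambda/\lambda}))\rceil$, which is~(a). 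The step I expect to be the main obstacle is the within-group scheduling: checking that the round-robin pairing really does complete each clique on $W$ using only about $|W|/2$ squares per vertex (half of the eventual degree), and that the non-equal group sizes and the case $s>n$ cause no loss. The point is that all the slack from these roundings is swallowed by the single ceiling $\lceil n/s\rceil$, whereas in Theorem~\ref{thm:alpha_large_t}(a), where $n/(2\lambda)\to\infty$, the analogous slack surfaces as the additive relative error $\bigo(\lambda/n)$.
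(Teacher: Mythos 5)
Your proposal is correct and follows essentially the same approach as the paper: part~(b) is the Caro--Wei bound via the average degree $2\lambda$, and part~(a) partitions $[n]$ into $\lceil n/k\rceil$ groups, runs the round-robin clique construction (the paper's Algorithm~\ref{alg:2}) within each group, and uses a Chernoff bound plus a union bound to conclude that a.a.s.\ every group becomes a clique, giving $\alpha(G_t)\le\lceil n/k\rceil$. The only cosmetic difference is that you take the union bound directly over the $n$ vertices with tail probability $n^{-2}$, whereas the paper bounds the expected number of failed parts by $\bigo(n/\lambda^{5/2})$ and then applies Markov's inequality; both work identically in the regime $\lambda\sim cn$.
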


On the other extreme case, if $t = t(n) \ll n$, then the number of vertices that are \emph{not} isolated is (deterministically) at most $2t = o(n)$ and so $\alpha(G_t) \sim n$. Understanding $\alpha(G_t)$ seems to be more challenging when $t \sim c n$ for some constant $c \in (0, \infty)$. It is easy to see that $\alpha(G_t) = \Theta(n)$ but determining the constants hidden in the $\Theta(\cdot)$ notation appears to be difficult. Indeed, in this regime we do not even know the behaviour of the independence number of the binomial random graph~\cite{bayati2010combinatorial}. This random graph, much easier model to analyze, is a special case of the semi-random process and can be easily mimicked by it. We provide a few natural upper and lower bounds later on but more work needs to be done to have a better understanding of this graph parameter. 

%%%%%%%%%%%%%%%%%%%%%%%%%%%%%%%%
\subsection{Structure of the Paper}
%%%%%%%%%%%%%%%%%%%%%%%%%%%%%%%%

We first introduce some basic concentration tools and present known results about the semi-random process (Section~\ref{sec:prelim}). Complete graphs are investigated in Section~\ref{sec:cliques}, chromatic number in Section~\ref{sec:chi}, and independent sets in Section~\ref{sec:alpha}. We conclude the paper with summarizing open problems that are left to be investigated (Section~\ref{sec:conclusions}). 

%%%%%%%%%%%%%%%%%%%%%%%%%%%%%%%%%%%%%%%%%%%%%%%%%
\section{Preliminaries}\label{sec:prelim}
%%%%%%%%%%%%%%%%%%%%%%%%%%%%%%%%%%%%%%%%%%%%%%%%%

%%%%%%%%%%%%%%%%%%%%%%%%%%%%%%%%
\subsection{Concentration Tools}\label{sec:concentration}
%%%%%%%%%%%%%%%%%%%%%%%%%%%%%%%%

Let us first state a few specific instances of Chernoff's bound that we will find useful. Let $X \in \textrm{Bin}(n,p)$ be a random variable distributed according to a Binomial distribution with parameters $n$ and $p$. Then, a consequence of \textbf{Chernoff's bound} (see e.g.,~\cite[Theorem~2.1]{JLR}) is that for any $t \ge 0$ we have
\begin{eqnarray}
\Prob( X \ge \E X + t ) &\le& \exp \left( - \frac {t^2}{2 (\E X + t/3)} \right)  \label{chern1} \\
\Prob( X \le \E X - t ) &\le& \exp \left( - \frac {t^2}{2 \E X} \right).\label{chern}
\end{eqnarray}

Moreover, let us mention that the bound holds in a more general setting as well, that is, for $X=\sum_{i=1}^n X_i$ where $(X_i)_{1\le i\le n}$ are independent variables and for every $i \in [n]$ we have $X_i \in \textrm{Bernoulli}(p_i)$ with (possibly) different $p_i$-s (again, see~e.g.,~\cite{JLR} for more details). Finally, it is well-known that the Chernoff bound also applies to negatively correlated Bernoulli random variables~\cite{dubhashi1998balls}. 

%%%%%%%%%%%%%%%%%%%%%%%%%%%%%%%%%%%%%%%%%%%%%%%%%
\subsection{The Differential Equation Method} \label{sec:DEs}

For one of our bounds, we will use the differential equation method (see~\cite{BD20} for a gentle introduction) to establish dynamic concentration of our random variables. The origin of the differential equation method stems from work done at least as early as 1970 (see Kurtz \cite{Kurtz1970}), and which was developed into a very general tool by Wormald \cite{W1995, W1999} in the 1990's. Indeed, Wormald proved a ``black box'' theorem, which gives dynamic concentration so long as some relatively simple conditions hold. Warnke \cite{Warnke2020} recently gave a short proof of a somewhat stronger black box theorem. 

%%%%%%%%%%%%%%%%%%%%%%%%%%%%%%%%%%%%%%%%%%%%%%%%%
\subsection{Literature Review} 

Since the semi-random process is still a relatively new model, let us highlight a few results on the model.

%%%%%%%%%%%%%%%%%%%%%%%%%%%%%%%%%%%%%%%%%%%%%%%%%
\subsubsection{Perfect Matchings} 
%%%%%%%%%%%%%%%%%%%%%%%%%%%%%%%%%%%%%%%%%%%%%%%%%

In the very first paper~\cite{beneliezer2019semirandom}, it was shown that the semi-random process is general enough to approximate (using suitable strategies) several well-studied random graph models, including an extensively studied $k$-out process (see, for example, Chapter~18 in~\cite{Karonski_Frieze}). In the $k$-out process, each vertex independently connects to $k$ randomly selected vertices which results in a random graph on $n$ vertices and $k n$ edges. 

Since the $2$-out process has a perfect matching \emph{a.a.s.}~\cite{frieze1986maximum}, we immediately get that one can create a perfect matching in $(2+o(1))n$ rounds. By coupling the semi-random process with another random graph that is known to have a perfect matching \emph{a.a.s.}~\cite{pittel}, the upper bound can be improved to $(1+2/e+o(1))n < 1.73576n$. This bound was consecutively improved by investigating a fully adaptive algorithm~\cite{gao2022perfect}. The currently best upper bound is $1.20524 n$. On the other hand, the lower bound observed in~\cite{beneliezer2019semirandom} ($(\ln(2)+o(1))n > 0.69314n$) was improved as well, and now we know that one needs at least $0.93261 n$ rounds to create a perfect matching~\cite{gao2022perfect}. 

%%%%%%%%%%%%%%%%%%%%%%%%%%%%%%%%%%%%%%%%%%%%%%%%%
\subsubsection{Hamilton Cycles} 
%%%%%%%%%%%%%%%%%%%%%%%%%%%%%%%%%%%%%%%%%%%%%%%%%

It is known that a.a.s.\ the famous $3$-out process is Hamiltonian~\cite{bohman2009hamilton}. Since the semi-random process can be coupled with the $k$-out process~\cite{beneliezer2019semirandom} (for any $k \in \N$), we get that a.a.s.\ one may create a Hamilton cycle in $(3+o(1)) n$ rounds. A new upper bound was obtained in~\cite{gao2020hamilton} in terms of an optimal solution to an optimization problem whose value is believed to be $2.61135 n $ by numerical support. 

The upper bound of $(3+o(1))n$ obtained by simulating the $3$-out process is \emph{non-adaptive}. That is, the strategy does \textit{not} depend on the history of the semi-random process. The above mentioned improvement proposed in~\cite{gao2020hamilton} uses an adaptive strategy but in a weak sense. The strategy consists of 4 phases, each lasting a linear number of rounds, and the strategy is adjusted \emph{only} at the end of each phase (for example, the player might identify vertices of low degree, and then focus on connecting circles to them during the next phase). 

In~\cite{gao2022fully}, a fully adaptive strategy was proposed that pays attention to the graph $G_t$ and the position of $u_t$ for every single step $t$. As expected, such a strategy creates a Hamilton cycle substantially faster than the weakly adaptive or non-adaptive strategies,
and it allows to improve the upper bound from $2.61135 n$ to $2.01678 n$. One more trick was observed recently which further improves an upper bound to $1.84887n$~\cite{frieze2022hamilton}. After combining all the ideas together, the currently best upper bound is equal to $1.81696 n$~\cite{frieze2023building}.

Let us now move to the lower bounds. As observed in the initial paper introducing the semi-random process~\cite{beneliezer2019semirandom}, if $G_t$ has a Hamiltonian cycle, then $G_t$ has minimum degree at least 2. Thus, a.a.s.\ it takes at least $( \ln 2+ \ln(1+\ln2) + o(1)) n \ge 1.21973 n$ rounds to achieve this property as this is exactly how many rounds are needed to get the property of having the minimum degree at least $2$~\cite{beneliezer2019semirandom}. In~\cite{gao2020hamilton}, the lower bound mentioned above was shown to not be tight. The lower bound was increased by $\eps n = 10^{-8} n$ and so numerically negligible. Better bound was obtained in~\cite{gao2022fully} and now we know that a.a.s.\ it takes at least $1.26575 n$ rounds to create a Hamilton cycle.

%%%%%%%%%%%%%%%%%%%%%%%%%%%%%%%%%%%%%%%%%%%%%%%%%
\subsubsection{Spanning Subgraphs} 
%%%%%%%%%%%%%%%%%%%%%%%%%%%%%%%%%%%%%%%%%%%%%%%%%

Let us now discuss what is known about the property of containing a given spanning graph $H$ as a subgraph. It was asked by Noga Alon whether for any bounded-degree $H$, one can construct a copy of $H$ \emph{a.a.s.}\ in $O(n)$ rounds.  This question was answered positively in a strong sense in~\cite{beneliezer2020fast}, in which it was shown that any graph with maximum degree $\Delta$ can be constructed \emph{a.a.s.}\ in $(3\Delta/2+o(\Delta))n$ rounds. They also proved that if $\Delta = \omega (\log(n))$, then this upper bound improves to $(\Delta/2 + o(\Delta))n$ rounds. Note that both of these upper bounds are asymptotic in $\Delta$. When $\Delta$ is constant in $n$, such as in both the perfect matching and Hamiltonian cycle setting, determining the optimal dependence on $\Delta$ for the number of rounds needed to construct $H$ remains open. Moving to this direction, $k$-factors and $k$-connectivity was studied recently in~\cite{koerts2022k}.

%%%%%%%%%%%%%%%%%%%%%%%%%%%%%%%%%%%%%%%%%%%%%%%%%
\subsubsection{A Few Other Directions} 
%%%%%%%%%%%%%%%%%%%%%%%%%%%%%%%%%%%%%%%%%%%%%%%%%

Finally, let us mention that sharp thresholds were studied recently in~\cite{macrury2022sharp}. In fact, the results from there apply to a more general class of processes including the semi-random process. Moreover, some interesting variants of the semi-random process are already considered. In~\cite{burova2022semi}, a random spanning tree of $K_n$ is presented to the player who needs to keep one of the edges. In~\cite{gilboa2021semi}, squares presented by the process follow a random permutation. In~\cite{Harjas}, the process in which $k$ random squares are presented and the player needs to select one of them before creating an edge is considered. Hypergraphs are investigated in~\cite{behague2022} and~\cite{molloy2023matchings}.

%%%%%%%%%%%%%%%%%%%%%%%%%%%%%%%%%%%%%%%%%%%%%%%%%
\section{Complete Graphs}\label{sec:cliques}
%%%%%%%%%%%%%%%%%%%%%%%%%%%%%%%%%%%%%%%%%%%%%%%%%

Let us start with a well-known observation, closely related to the coupon collector problem, that the number of squares on a given vertex is a random variable that is asymptotically distributed as the Poisson random variable. 

\begin{lemma}\label{lem:squares}
Suppose that $t=t(n)$ is such that $t = n^{1+o(1)}$ and $t = \bigo(n \log n)$. For any vertex $v \in [n]$, let $X_v=X_v(n)$ be the number of squares that land on $v$ until time $t$ and let 
$$\lambda \ :=\ \E[X_v] = t/n.$$
Then, the following properties hold.
\begin{itemize}
\item[(a)] For any vertex $v \in [n]$ and for any $k = o(\sqrt{n})$,
$$
\Prob( X_v = k ) \ \sim\ \frac {\lambda^k}{k!} \exp(-\lambda).
$$
\item [(b)] Suppose that $t = t(n) = n \log n / \beta$ for some $\beta = \beta(n) \to \infty$ as $n \to \infty$. As in Theorem~\ref{thm:small_t}, define 
$$\ell \ = \ \ell(n)\ :=\    \frac {\log n}{\log \beta - 2 \log \log \beta} \ \sim\ \frac {\log n}{\log \beta} \ =\ \frac {\beta}{\log \beta} \cdot \lambda.$$
Then, a.a.s.
$$
\max \Big( X_v : v \in [n] \Big) \ \le\ \ell \  \sim \ \frac {\beta}{\log \beta} \cdot \lambda.
$$
\item [(c)] Suppose that $t = t(n) = \gamma n \log n$ for some $\gamma \in (0,\infty)$. Let $\xi = \xi( \gamma ) \in (1,\infty)$ be as in~(\ref{eq:xi}). As in Theorem~\ref{thm:large_t}, define 
$$\ell \ = \ \ell(n)\ :=\   \frac { 1-\gamma }{ \log \xi - 1 } \log n \ =\ \xi \gamma \log n \ =\ \xi \lambda.$$
 Then, a.a.s.
$$
\max \Big( X_v : v \in [n] \Big) \ \le\ \ell \  = \ \xi \lambda.
$$
\end{itemize}
\end{lemma}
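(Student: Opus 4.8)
The plan is to recognise $X_v$ as an exact binomial random variable and then prove~(a) by the standard Poisson limit, and~(b)--(c) by a first moment (union bound) argument over the $n$ vertices, with essentially all of the difficulty concentrated in~(c). For~(a), note that the squares $u_1,\ldots,u_t$ are i.i.d.\ uniform on $[n]$ and do not depend on the strategy, so $X_v\sim\Bin(t,1/n)$ and $\Prob(X_v=k)=\binom{t}{k}n^{-k}(1-1/n)^{t-k}$. Writing $\binom{t}{k}n^{-k}=\frac{\lambda^k}{k!}\prod_{i=0}^{k-1}(1-i/t)$, the hypotheses $k\le n^{1/3}$ and $t=n^{1+o(1)}$ give $\prod_{i=0}^{k-1}(1-i/t)=1+O(k^2/t)=1+o(1)$, while $(1-1/n)^{t-k}=\exp(-(t-k)(1/n+O(n^{-2})))=e^{-\lambda}(1+o(1))$ because $k/n=o(1)$ and $t/n^2=O(\log n/n)=o(1)$; multiplying the two estimates gives~(a).

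For (b) and (c) I would use $\Prob(\max_v X_v>\ell)\le n\,\Prob(X_v\ge m)$ with $m:=\lfloor\ell\rfloor+1$. In both regimes $m\ge\ell>\lambda$ for $n$ large (since $\ell/\lambda\to\infty$ in~(b) and $\ell/\lambda=\xi>1$ in~(c)), so the ratio of consecutive binomial probabilities, $\Prob(X_v=j+1)/\Prob(X_v=j)=\frac{(t-j)/n}{(j+1)(1-1/n)}\le\frac{\lambda/\ell}{1-1/n}$ for $j\ge m$, is bounded away from $1$; hence the tail is dominated by its first term, $\Prob(X_v\ge m)=O(\Prob(X_v=m))$. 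Next, using $(1-1/n)^{t-m}\le e^{-(t-m)/n}=(1+o(1))e^{-\lambda}$ and the Stirling lower bound $m!\ge\sqrt{2\pi m}\,(m/e)^m$, one gets $\Prob(X_v=m)\le\frac{\lambda^m}{m!}(1+o(1))e^{-\lambda}\le(1+o(1))(e\lambda/m)^m(2\pi m)^{-1/2}e^{-\lambda}$, and since $a\mapsto(e\lambda/a)^a a^{-1/2}$ is decreasing for $a>\lambda$ and $m\ge\ell$, this is at most $(1+o(1))(e\lambda/\ell)^\ell(2\pi\ell)^{-1/2}e^{-\lambda}$.

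In regime~(b) there is ample room, so discarding the harmless factors $e^{-\lambda}$ and $(2\pi\ell)^{-1/2}$ suffices: $\log(n(e\lambda/\ell)^\ell)=\log n+\ell(1+\log(\lambda/\ell))$, and substituting $\lambda/\ell=(\log\beta-2\log\log\beta)/\beta$ together with the identity $\ell(\log\beta-2\log\log\beta)=\log n$ cancels the $\log n$ terms and leaves $\ell(1-\log\log\beta)-2\ell\log\log\beta/\log\beta+\cdots$, a quantity of order $-\ell\log\log\beta\to-\infty$ because $\ell\to\infty$ (as $t=n^{1+o(1)}$ forces $\log\beta=o(\log n)$, hence $\ell\sim\log n/\log\beta\to\infty$) and $\log\log\beta\to\infty$. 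In regime~(c) the estimate is exactly critical and both surviving factors must be retained: $(e\lambda/\ell)^\ell=(e/\xi)^{\xi\lambda}$, and by~(\ref{eq:xi2}) we have $(e/\xi)^{\xi\lambda}e^{-\lambda}=\exp(-\lambda(\xi(\log\xi-1)+1))=\exp(-\lambda/\gamma)=1/n$, so $\Prob(X_v\ge m)=O(n^{-1}(\log n)^{-1/2})$ and therefore $n\,\Prob(X_v\ge m)=O((\log n)^{-1/2})\to0$.

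The main obstacle is precisely this last step. In~(c) the threshold $\ell$ sits exactly where even the crude Chernoff bound $\Prob(X_v\ge m)\le(e\lambda/m)^m e^{-\lambda}\le(e\lambda/\ell)^\ell e^{-\lambda}$ already equals $1/n$, so the union bound is rescued only by the polynomial $\ell^{-1/2}$ factor won from the Stirling refinement. Consequently one must track the approximation $(1-1/n)^{t-m}=(1+o(1))e^{-\lambda}$ carefully enough that its error does not swallow that $\ell^{-1/2}$, and deal with the ceiling $m=\lfloor\ell\rfloor+1$ (which may be essentially equal to $\ell$) through the monotonicity of $a\mapsto(e\lambda/a)^a a^{-1/2}$; parts~(a) and~(b), by contrast, have plenty of slack and need only routine estimates.
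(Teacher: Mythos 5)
Your proposal is correct and follows essentially the same route as the paper: exact binomial computation for (a), Stirling plus a union bound over vertices for (b) and (c), with the identity $\xi(\log\xi-1)=(1-\gamma)/\gamma$ making (c) exactly critical and the $\ell^{-1/2}$ Stirling factor providing the needed $o(1)$. The only cosmetic difference is that you control the tail $\Prob(X_v\ge m)$ directly by the geometric decay of consecutive binomial ratios, whereas the paper splits at $n^{1/3}$ (using its Poisson estimate from (a) below the cutoff and a crude bound above it); both are fine and yield the same conclusion.
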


\begin{proof}
Note that for any $k = o(\sqrt{n})$,
\begin{eqnarray*}
\Prob( X_v = k ) &=& \binom{t}{k} \left( \frac {1}{n} \right)^k \left( 1 - \frac {1}{n} \right)^{t-k} \\
&=& \frac {t^k}{k!} \left( 1 + \bigo(k/t) \right)^k \left( \frac {1}{n} \right)^k \exp \left( - \frac {1}{n} + \bigo( 1/n^2 ) \right)^{t-k} \\
&=& \frac {(t/n)^k}{k!} \left( 1 + \bigo(k^2/t) \right) \exp \left( - t/n + \bigo(k/n) + \bigo( t/n^2 ) \right) \\
&\sim& \frac {\lambda^k}{k!} \exp(-\lambda).
\end{eqnarray*}
The property~(a) holds. 

\medskip

To show property~(b), first note that, since $t = n^{1+o(1)}$, we get that $\beta = n^{o(1)}$ (but also, by definition, $\beta \to \infty$ as $n \to \infty$) and so $\ell \to \infty$ as $n \to \infty$. It follows from part~(a) and the Stirling's formula ($\ell! \sim \sqrt{2 \pi \ell} (\ell/e)^{\ell}$) that for any vertex $v \in [n]$,
\begin{eqnarray*}
\Prob \Big( X_v = \ell \Big) &\sim& \frac {\lambda^\ell}{\ell!} e^{-\lambda} \le \frac {\lambda^\ell}{\ell!}  = o \left( \frac { (t/n)^\ell }{ (\ell/e)^\ell } \right) = o \left( \left( \frac {e \log n}{\ell \beta} \right)^{\ell} \right) = o \left( \left( (1+o(1)) \frac {e \log \beta}{\beta} \right)^{\ell} \right) \\
&=& o \left( \exp \left( - \ell \big( \log \beta - \log \log \beta - \bigo(1) \big) \right) \right) \\
&=& o \left( \exp \left( - \frac {\log n}{\log \beta - 2 \log \log \beta} \big( \log \beta - \log \log \beta - \bigo(1) \big) \right) \right) \\
&=& o \left( \exp \left( - \log n \right) \right) = o(1/n).
\end{eqnarray*}
Now, note that for any $\ell \le k < n^{1/3}$ we have
$$
\frac {\Prob(X_v=k+1)}{\Prob(X_v=k)} \sim \frac{\lambda}{k+1} \le \frac {\lambda}{\ell} \sim \frac {\log n / \beta}{\log n / \log \beta} = \frac {\log \beta}{\beta} = o(1),
$$
and so $\Prob(\ell \le X_v \le n^{1/3}) \sim \Prob(X_v=\ell)$. Finally, note that 
$$
\Prob(X_v \ge n^{1/3}) \le \binom{t}{n^{1/3}} \left( \frac {1}{n} \right)^{n^{1/3}} \le \left( \frac {et}{n^{4/3}} \right)^{n^{1/3}} = \exp \Big( - \Theta( n^{1/3} \log n ) \Big) = o(1/n).
$$
Combining all of these properties together we get that 
\begin{equation}\label{eq:prob_Xv}
\Prob(X_v \ge \ell) \le \Prob(\ell \le X_v \le n^{1/3}) + \Prob(X_v \ge n^{1/3}) = o(1/n),
\end{equation} 
and the property~(b) holds by the union bound over all vertices $v$.

\medskip

The same argument can be used to show property~(c). This time, for any vertex $v \in [n]$,
\begin{eqnarray*}
\Prob \Big( X_v = \ell \Big) &\sim& \frac {\lambda^\ell}{\ell!} e^{-\lambda} = o \left( \left( \frac {\gamma \log n}{\ell/e} \right)^{\ell} \exp \Big( - \gamma \log n \Big) \right) \\
&=& o \left( \exp \Big( \ell \log (e / \xi) - \gamma \log n \Big) \right) \\
&=& o \left( \exp \Big( - \ell ( \log \xi - 1) - \gamma \log n \Big) \right) \\
&=& o \left( \exp \Big( - (1-\gamma) \log n - \gamma \log n \Big) \right) \\
&=& o ( 1/n ).
\end{eqnarray*}
Since $\xi > 1$, for any $\ell \le k < n^{1/3}$ we have
$$
\frac {\Prob(X_v=k+1)}{\Prob(X_v=k)} \sim \frac{\lambda}{k+1} \le \frac {\lambda}{\ell} = \frac {1}{\xi} < 1,
$$
and so $\Prob(\ell \le X_v \le n^{1/3}) = \bigo( \Prob(X_v=\ell) )$. The conclusion is the same, $\Prob(X_v \ge \ell) = o(1/n)$ (see~(\ref{eq:prob_Xv})), and the property~(c) holds by the union bound over all vertices $v$. 
\end{proof}

We independently consider lower and upper bounds for the order of complete graphs one may build during the semi-random process. 

%%%%%%%%%%%%%%%%%%%%%%%%%%%%%%%%%%%%%%%%%%%%%%%%%
\subsection{Lower Bounds}
%%%%%%%%%%%%%%%%%%%%%%%%%%%%%%%%%%%%%%%%%%%%%%%%%

We present two algorithms that can be used to build complete graphs. Both can be used for any value of $t$ but the first algorithm will turn out to be better than the second one, provided that $t \le \gamma_{\ell} n \log n$, where $\gamma_{\ell} = (2 \log 2-1)^{-1} \approx 2.59$.

\begin{algorithms}\label{alg:1}
The algorithm consists of $k-1$ phases. 
The first phase has only one round in which the player creates $K_2$, a single edge. 

At the beginning of phase $i$, $i \ge 2$, a complete graph $K_i$ on the vertex set $v_1, v_2, \ldots, v_i$ is already constructed. Any other vertex $v$ that is covered by $s=s(v) \ge 1$ squares has the following property: for any $j \in [s]$, the $j$-th square is connected to a circle on vertex $v_j$. The player maintains this property by applying the following strategy. If a square lands on vertex $v \notin \{v_1, v_2, \ldots, v_i\}$ that is already covered by $s$ squares, then she connects $v$ to vertex $v_{s+1}$. (If $v = v_j$ for some $j \in [i]$, then she plays arbitrarily---that edge is ignored anyway.) The $i$-th phase ends when a square lands on a vertex with $i-1$ squares and a copy of a complete graph $K_{i+1}$ is created.

The algorithm ends at the end of phase~$k-1$ when a copy of $K_k$ is created.
\end{algorithms}

The analysis of Algorithm~\ref{alg:1} proves Theorem~\ref{thm:small_t}(a) and the first part of Theorem~\ref{thm:large_t}(a), namely, the lower bound of $k_1$.

\begin{proof}[Proof of Theorem~\ref{thm:small_t}(a) and the first part of Theorem~\ref{thm:large_t}(a)]
Let us first prove Theorem~\ref{thm:small_t}(a). Recall that
$$
k = k(n) = \frac { \log n - 2 \log \log n - \lambda }{\log \beta} \sim \frac {\log n}{\log \beta} = \frac {\beta}{\log \beta} \cdot \lambda,
$$
where $\lambda = \lambda (n) = t / n \ll \log n$ is the expected number of squares on a given vertex and $\beta = \beta(n) = n \log n / t \to \infty$, as $n \to \infty$. Note that $\beta = n^{o(1)}$ (but also, by definition, $\beta \to \infty$ as $n \to \infty$) and so $k \to \infty$ as $n \to \infty$. On the other hand, since $\beta \to \infty$ as $n \to \infty$, we get that $k = o(\log n)$.

Suppose that the player uses Algorithm~\ref{alg:1} to play the game against the semi-random process. We will show that at time $t$ a.a.s.\ there are at least $k$ vertices that are covered by at least $k$ squares. It is easy to see that the algorithm has to end before that round. This will imply the lower bound of $k$. 

Let $Y = Y(t)$ be the number of vertices that are covered by $k$ squares at time $t$. Using Lemma~\ref{lem:squares} and the Stirling's formula ($k! \sim \sqrt{2\pi k} (k/e)^k$), we get that
\begin{eqnarray}
\E[Y] &\sim& n \cdot \frac{\lambda^k}{k!} \exp(-\lambda) \nonumber \\
&=& \Theta(k^{-1/2}) \cdot n \cdot \left( \frac {e\lambda}{k} \right)^k \exp(-\lambda) \nonumber \\
&=& \Theta(k^{-1/2}) \cdot \exp \left( \log n - k \log \left( \frac {k}{e\lambda} \right) -\lambda \right). \label{eq:exp_y}
\end{eqnarray}
Since $k \le \frac {\beta}{\log \beta} \cdot \lambda$, we get that $\frac {k}{e\lambda} \le \frac {\beta}{e \log \beta} \le \beta$ and so 
\begin{eqnarray*}
\E[Y] &=& \Omega(k^{-1/2}) \cdot \exp \left( \log n - k \log \beta -\lambda \right) \\
&=& \Omega(k^{-1/2}) \cdot \exp \left( \log n - (\log n - 2 \log \log n - \lambda) -\lambda \right) \\
&=& \Omega \left( \frac {\log^2 n}{k^{1/2}} \right) \gg k^{3/2} \gg k,
\end{eqnarray*}
as $k = o(\log n)$. Finally, note that events ``vertex $v$ is covered by $k$ squares'' associated with different vertices are negatively correlated. As a result, it follows immediately from Chernoff's bound~(\ref{chern}) and the comment right after it that a.a.s.\ $Y \ge k$. (Alternatively, one could us the second moment method to get the same conclusion.) As argued above, this finishes the proof of Theorem~\ref{thm:small_t}(a).

\medskip

The same argument implies the lower bound of $k_1$ in Theorem~\ref{thm:large_t}(a). Recall that this time $\lambda = \lambda (n) = t / n = \gamma \log n$ and
$$
k_1 = k_1(n) = \frac { (1-\gamma) \log n - 2 \log \log n }{\log \xi - 1} \sim \frac {1-\gamma}{\log \xi - 1} \log n = \xi \gamma \log n.
$$
Computations performed in~(\ref{eq:exp_y}) still apply. Since $k_1 \le \xi \gamma \log n$, we get that 
\begin{eqnarray*}
\E[Y] &=& \Omega(k_1^{-1/2}) \cdot \exp \left( \log n - k_1 \log (\xi/e) -\gamma \log n \right) \\
&=& \Omega(k^{-1/2}) \cdot \exp \left( \log n - ((1-\gamma) \log n - 2 \log \log n) -\gamma \log n \right) \\
&=& \Omega \left( \frac {\log^2 n}{k^{1/2}} \right) = \Omega \left( k^{3/2} \right) \gg k,
\end{eqnarray*}
as $k = \Theta(\log n)$. As before, we conclude that a.a.s.\ $Y \ge k$ and we are done.
\end{proof}

To prove the second part of Theorem~\ref{thm:large_t}(a) (that is, the lower bound of $k_2$), we need to analyze the second algorithm which performs better when $t \ge \gamma_{\ell} n \log n$.

\begin{algorithms}\label{alg:2}
Suppose that $k=2\ell+1$ for some $\ell \in \N$. Before the game starts, select arbitrarily $k$ vertices $v_0, v_1, \ldots, v_{2\ell}$ from the vertex set $[n]$. For each $i \in [2\ell] \cup \{0\}$ and for each $j \in [\ell]$, the player connects the $j$th square landing on vertex $v_i$ with vertex $v_{(i+j) \pmod{2\ell}}$. The algorithm ends when each vertex $v_i$  is covered by at least $\ell$ squares, and a copy of $K_k$ is created.
\end{algorithms}

Clearly, the player could partition the vertex set into $n/k$ sets and then use a slightly more sophisticated algorithm in which she tries to simultaneously create $n/k$ complete graphs, by applying Algorithm~\ref{alg:2} independently to each part. Such algorithm would finish once at least one set induces a complete graph. We do not use and analyze it as it would give an asymptotically negligible improvement on the lower bound. However, it will be useful later on once we analyze the chromatic number.

\begin{proof}[Proof of the second part of Theorem~\ref{thm:large_t}(a)]
Recall that $t = t(n) = \gamma n \log n$ for some $\gamma \in (0, \infty)$ and
$$
k_2 = k_2(n) = 2 \gamma \log n - 4 \sqrt{ \gamma \log n \log \log n } \sim 2 \gamma \log n.
$$
Let $k$ be the smallest odd integer that is at least $k_2$ and assume $k=2\ell+1$ for some $\ell \in \N$. Clearly, $k = k_2 + \bigo(1)$.

Suppose that the player uses Algorithm~\ref{alg:2} to play the game against the semi-random process. For any $i \in [2\ell] \cup \{0\}$, let $X_i$ be the number of squares on $v_i$ at time $t$. Note that $X_i \in \textrm{Bin}(t,1/n)$ with $\E[X_i] = \gamma \log n$. It follows from Chernoff's bound~(\ref{chern}) that 
\begin{eqnarray*}
\Prob( X_i < \ell ) &=& \Prob \Big( X_i \le \E [X_i] - 2 \sqrt{ \gamma \log n \log \log n } + \bigo(1) \Big) \\
&\le& \exp \left( - \frac { (2 \sqrt{ \gamma \log n \log \log n } + \bigo(1))^2 }{ 2 \gamma \log n } \right) \\
&=& \exp \Big( - 2 \log \log n + o(1) \Big) \\
&\sim& (\log n)^{-2}.
\end{eqnarray*}
Hence, by the union bound over all vertices $v_i$, the algorithm does not finish before time $t$ with probability at most $(2\ell+1) \cdot \bigo( (\log n)^{-2} ) = \bigo( (\log n)^{-1} ) = o(1)$. Hence the desired bound holds a.a.s., and the proof is finished. 
\end{proof}

%%%%%%%%%%%%%%%%%%%%%%%%%%%%%%%%%%%%%%%%%%%%%%%%%
\subsection{Upper Bounds}
%%%%%%%%%%%%%%%%%%%%%%%%%%%%%%%%%%%%%%%%%%%%%%%%%

Suppose first that $t = t(n) = n \log n / \beta$ for some $\beta = \beta(n) \to \infty$ as $n \to \infty$. Lemma~\ref{lem:squares}(b) shows that a.a.s.\ the number of squares on any vertex is at most $\ell \sim \log n / \log \beta$. It implies immediately that one cannot construct a complete graph of size larger than $2\ell+1$. But the truth is that a.a.s.\ it is only possible to build a complete graph of size asymptotic to $\ell$. The main difficulty in showing this lies in the fact that the player can easily create vertices of large degree by placing a large number of circles on some vertices. So this is certainly not the bottleneck for this problem. The key observation used in the proof is that in order to create a large complete graph, many squares need to land on vertices that are already covered by circles, but this happens quite rarely. 

Suppose that vertex $u$ is covered by $k$ squares, $u_{t_1}, u_{t_2}, \ldots, u_{t_k}$, for some $k \le \ell$. We will first estimate the number of squares that land ``on top'' of the associated circles $v_{t_1}, v_{t_2}, \ldots, v_{t_k}$. Formally, we will say that $(v_{t_i}, u_{s})$ is a \textbf{rare pair} if $v_{t_i}$ and $u_{s}$ cover the same vertex and $s > t_i$. Note, in particular, that when distinct squares fall on the same circle, those are still counted as distinct rare pairs. The name is justified by the next lemma which shows that on average only $o(\ell)$ squares land on a given circle.

\begin{lemma}\label{lem:rare_pairs}
Suppose that $t = t(n)$ is such that $t=n^{1+o(1)}$ and $t \ll n \log n$. Let $\beta = \beta(n) = n \log n / t \to \infty$, as $n \to \infty$. Let $\ell = \ell(n) \sim \log n / \log \beta$ and $\epsilon = \epsilon (n) = o(1)$ be defined as in Theorem~\ref{thm:small_t}. Then, the following property holds a.a.s.: for any vertex $u$,
\begin{itemize}
\item[(a)] $u$ is covered by $k \le \ell$ squares, $u_{t_1}, u_{t_2}, \ldots, u_{t_k}$,
\item[(b)] the associated circles, $v_{t_1}, v_{t_2}, \ldots, v_{t_k}$, belong to less than $\ell^2 \epsilon = o(\ell^2)$ rare pairs.
\end{itemize}
\end{lemma}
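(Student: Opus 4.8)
The plan is to prove part~(a) first as a straightforward consequence of Lemma~\ref{lem:squares}(b), and then to establish part~(b) by a first-moment/union-bound argument on the number of rare pairs associated to a fixed vertex $u$. For part~(a): Lemma~\ref{lem:squares}(b) already gives that a.a.s.\ $\max_v X_v \le \ell$, so every vertex $u$ is covered by some $k = k(u) \le \ell$ squares $u_{t_1}, \dots, u_{t_k}$; this is exactly statement~(a), and we may condition on this event for the rest of the argument. For part~(b), fix a vertex $u$ and work on the event that $u$ receives exactly $k \le \ell$ squares at times $t_1 < \dots < t_k$, with associated circles $v_{t_1}, \dots, v_{t_k}$ placed on (not necessarily distinct) vertices $w_1, \dots, w_k \in [n]$. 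The number of rare pairs in which these circles participate is $\sum_{i=1}^k Z_i$, where $Z_i$ counts squares $u_s$ with $s > t_i$ landing on $w_i$.

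The key estimate is that each $Z_i$ is stochastically dominated by (or has the same distribution as) a binomial $\Bin(t - t_i, 1/n) \preceq \Bin(t, 1/n)$ random variable, since after round $t_i$ the squares continue to arrive independently and uniformly, irrespective of the player's choices and of the location of $w_i$. Hence $\sum_i Z_i$ is dominated by a sum of at most $\ell$ such binomials, so it is dominated by $\Bin(\ell t, 1/n)$, with mean $\ell t / n = \ell \lambda = \ell \log n / \beta$. The target bound $\ell^2 \epsilon$ should, in each of the three regimes of $\beta$, be comfortably larger than this mean by the relevant multiplicative factor: indeed $\ell^2 \epsilon / (\ell \lambda) = \ell \epsilon \beta / \log n \sim \epsilon \beta / \log \beta$, and one checks that in the regime $\beta \le \log n/\log\log n$ this is $\Theta(e^2)$ (here $\epsilon = e^2(\log\beta)/\beta$), while in the other two regimes it is $\omega(1)$ (since then $\ell = \omega(1)$ is small relative to $\beta$, and $\epsilon \ell = \omega(1)$). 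So we are asking for a binomial with mean $m$ to exceed a threshold that is a large constant multiple (or super-constant multiple) of $m$; Chernoff's bound~(\ref{chern1}) gives that this fails with probability at most $\exp(-\Omega(\ell^2 \epsilon))$ — and one must verify $\ell^2 \epsilon = \omega(\log n)$, which again follows from $\ell^2 \epsilon / \log n = \ell \epsilon \beta / \log n \cdot (\ell / \ell) \cdot \ldots$; concretely $\ell^2\epsilon \gg \log n$ because $\epsilon\beta/\log\beta$ is at least a large constant and $\ell \sim \log n/\log\beta \to \infty$, so $\ell^2 \epsilon \gtrsim \ell \cdot \text{(const)} \cdot \log n/\log\beta \cdot \log\beta = \omega(\log n)$. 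Taking a union bound over the $n$ choices of $u$ then finishes part~(b).

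The one subtlety — and the step I expect to require the most care — is the conditioning: the times $t_i$ and the vertices $w_i$ are themselves random and chosen adaptively by the player, so one cannot simply say "$Z_i \sim \Bin(t-t_i,1/n)$" without justification. The clean way around this is to reveal the process in order and use the fact that, conditionally on the entire history up to and including round $t_i$ (which determines $t_i$ and $w_i$), the squares $u_{t_i+1}, \dots, u_t$ are i.i.d.\ uniform on $[n]$ and independent of that history; hence $Z_i \mid \mathcal{F}_{t_i}$ is exactly $\Bin(t - t_i, 1/n)$. Summing and using the tower property (or, more carefully, a union bound over the at most $\binom{t}{\le \ell}$ possible time-sets, which is $n^{o(1)\cdot\ell} = n^{o(\log n/\log\beta)}$ and hence subsumed by the $\exp(-\Omega(\ell^2\epsilon))$ factor since $\ell^2\epsilon = \omega(\log n) \gg \ell\log\beta$) handles the adaptivity. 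Either formulation works; I would present the martingale/tower-property version for brevity and then finish with the Chernoff bound and the union bound over $u \in [n]$, which together yield that (b) holds a.a.s., completing the proof of the lemma.
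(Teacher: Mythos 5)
Your high-level plan — bound the rare-pair count at a fixed vertex $u$ by a binomial tail and union-bound over $u\in[n]$, revealing squares sequentially so that given the history each $u_s$ covers a circle-vertex of $u$ with probability at most $\ell/n$ — matches the paper's argument. But two of your verification claims are false, and they matter. The statement ``$\ell^2\epsilon=\omega(\log n)$'' fails at the boundaries: when $\beta\approx\log n/\log\log n$ one has $\ell\lambda\approx\log n$ and $\ell^2\epsilon\approx e^2\ell\lambda\approx e^2\log n=\Theta(\log n)$. Likewise ``$\epsilon\beta/\log\beta=\omega(1)$'' fails at the bottom of the second regime, where it is only about $15$. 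So Chernoff yields $\exp(-c\,\ell^2\epsilon)$ with a constant $c<1$ depending on the threshold-to-mean ratio, and whether this is $o(1/n)$ comes down to verifying, regime by regime, that the exponent exceeds $\log n$; the piecewise definition of $\epsilon$ is engineered precisely so that these constants are large enough, and that check is the whole content of the paper's three-case computation. The paper instead carries the large-deviation factor explicitly via $\binom{t}{\ell^2\epsilon}(\ell/n)^{\ell^2\epsilon}\le\exp\bigl(-\ell^2\epsilon\,\log(\ell\beta\epsilon/(e\log n))\bigr)$ — note the logarithm of the threshold-to-mean ratio is exactly $1$ at the boundary of the first regime — and then shows in each case that the exponent is at least $(e/2+o(1))\log n$.

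Two further problems. The parenthetical alternative, ``a union bound over the $\binom{t}{\le\ell}$ time-sets, which is $n^{o(1)\cdot\ell}$,'' is incorrect: $\binom{t}{\ell}\ge(t/\ell)^{\ell}=n^{(1+o(1))\ell}$, which dwarfs $\exp(\ell^2\epsilon)$ since in the first regime $\ell\epsilon\sim e^2\lambda=e^2\log n/\beta=o(\log n)$; keep the martingale version only. Finally, the domination $\sum_i Z_i\preceq\Bin(\ell t,1/n)$ is immediate only if the circles $v_{t_1},\dots,v_{t_k}$ cover \emph{distinct} vertices. If several of them sit on a common vertex $w$, one later square $u_s=w$ contributes several rare pairs at once, the per-round increment of $\sum_i Z_i$ is no longer $\{0,1\}$-valued, and the naive coupling to a binomial breaks; one should instead first control the number of rounds $s$ in which $u_s$ hits a circle-covered vertex of $u$ (those increments are genuinely $\{0,1\}$-valued with conditional mean at most $\ell/n$) and then relate that count to the number of rare pairs.
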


\begin{proof}
Property~(a) follows immediately from Lemma~\ref{lem:squares}(b). Since we aim for a statement that holds a.a.s.\ we may assume that property~(a) holds when proving property~(b). 

Note that if $(v_{t_i}, u_{s})$ forms a rare pair, then the square $u_{s}$ needs to arrive after the circle $v_{t_i}$ is placed (that is, $s>t_i$). Hence, the probability that a given vertex $u$ fails to satisfy property~(b) does not depend on the strategy of the player and can be upper bounded by 
$$
p \ :=\ \binom{t}{\ell^2 \epsilon} \left( \frac {\ell}{n} \right)^{\ell^2 \epsilon} \le \left( \frac {et}{\ell^2 \epsilon} \cdot \frac {\ell}{n} \right)^{\ell^2 \epsilon} = \left( \frac {e \log n}{\ell \beta \epsilon} \right)^{\ell^2 \epsilon} = \exp \left( - \ell^2 \epsilon \log \left( \frac {\ell \beta \epsilon}{e \log n} \right) \right).
$$

\medskip

Suppose first that $\beta \le \log n / \log \log n$. Then, since $\ell \ge \log n / \log \beta$, we get that $\epsilon = e^2 \log \beta / \beta \ge e^2 \log n / (\beta \ell)$ and so
$$
p \le \exp \left( - \ell^2 \epsilon \right) \le \exp \left( - e^2 \ell \log n / \beta \right).
$$
Since $\beta \le \log n / \log \log n$ (so, in particular, $\log \beta \le \log \log n$),
$$
p \le \exp \left( - e^2 \ell \log \log n \right) \le \exp \left( - e^2 \ell \log \beta \right) \le \exp \left( - e^2 \log n \right) = o(1/n).
$$

\medskip

Suppose now that $\log n / \log \log n < \beta \le \log^2 n$. In particular, $(1+o(1)) \log \log n \le \log \beta \le 2 \log \log n$. Then, since 
$$
\log \ell \ge \log \log n - \log \log \beta = (1+o(1)) \log \log n,
$$
we get that 
$$
\epsilon = \frac {15 \log \ell} {\ell} \ge \frac {2 e^2 \log \log n}{\ell} \ge \frac {e^2 \log \beta}{\ell} = \frac {e^2 \ell \log \beta}{\ell^2} \ge \frac {e^2 \log n}{\ell^2}.
$$
It follows that 
$$
p \le \exp \left( - e^2 \log n \cdot \log \left( \frac {e \beta}{\ell} \right) \right) = \exp \left( - e^2 \log n \cdot \log \left( (e+o(1)) \frac {\beta \log \beta}{\log n} \right) \right).
$$
Since $\beta \ge \log n / \log \log n$ and $\log \beta \ge (1+o(1)) \log \log n$, we conclude that
$$
p \le \exp \left( - e^2 \log n \cdot \log \Big( (e+o(1)) \Big) \right) = \exp \left( - (e^2 + o(1)) \log n \right) = o(1/n).
$$

\medskip

Finally, suppose that $\beta > \log^2 n$. This time $\epsilon = e / \ell$ and, since $\sqrt{\beta} > \log n$,
\begin{eqnarray*}
p &\le& \exp \left( - e \ell \log \left( \frac {\beta}{\log n} \right) \right) \le \exp \left( - e \ell \log \left( \sqrt{\beta} \right) \right) = \exp \Big( - (e/2) \ell \log \beta \Big) \\
&=& \exp \Big( - (e/2) \log n \Big) = o(1/n).
\end{eqnarray*}
In all scenarios, the conclusion follows by the union bound over all vertices. 
\end{proof}

For a given graph $G=(V,E)$ and any set of vertices $S \subseteq V$, we will use $G[S]$ to denote the graph \textbf{induced} by set $S$, that is, $G[S] = (S, E')$ and edge $uv \in E$ is in $E'$ if and only if $u \in S$ and $v \in S$. The above lemma immediately implies the following useful corollary. 

\begin{corollary}\label{cor:rare_pairs}
Suppose that $t = t(n)$ is such that $t=n^{1+o(1)}$ and $t \ll n \log n$.  Let $\ell = \ell(n)$ and $\epsilon = \epsilon(n)$ be defined as in Lemma~\ref{lem:rare_pairs}. Then, a.a.s.\ for any set $S \subseteq [n]$, $G_t[S]$ has at most $|S| \ell^2 \epsilon$ rare pairs.
\end{corollary}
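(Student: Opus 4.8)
The plan is to derive this immediately from Lemma~\ref{lem:rare_pairs} by summing over the vertices of $S$, with no further probabilistic input. First I would fix the a.a.s.\ event $\mathcal{E}$ on which the conclusion of Lemma~\ref{lem:rare_pairs} holds for \emph{every} vertex $u \in [n]$ simultaneously: every $u$ is covered by some $k = k(u) \le \ell$ squares $u_{t_1}, \ldots, u_{t_k}$, and the associated circles $v_{t_1}, \ldots, v_{t_k}$ lie in fewer than $\ell^2 \epsilon$ rare pairs. Crucially, $\mathcal{E}$ is a property of the realised sequence $(u_i, v_i)_{i \le t}$ alone and makes no reference to any vertex subset, so once we show the corollary's conclusion holds deterministically on $\mathcal{E}$ for \emph{all} $S$ at once, we are done; in particular no union bound over the (exponentially many) choices of $S$ is required.

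Next I would set up the bookkeeping. Each rare pair $(v_{t_i}, u_s)$ of $G_t$ is, by definition, ``attached'' to a unique vertex, namely the vertex covered by the square $u_{t_i}$ that is paired with the circle $v_{t_i}$; denote it $\pi(v_{t_i}, u_s)$. This map is well defined because a circle $v_{t_i}$ determines the round $t_i$ at which it was placed, hence its partner square $u_{t_i}$, hence the vertex $u_{t_i}$ covers. When counting the rare pairs of $G_t[S]$ one restricts attention to those rare pairs whose relevant endpoints lie in $S$; in particular every such rare pair has $\pi(v_{t_i}, u_s) \in S$. (If the notion of a rare pair of $G_t[S]$ additionally demands that the common vertex covered by $v_{t_i}$ and $u_s$ lie in $S$, this only discards further rare pairs, so the bound we prove still holds \emph{a fortiori}.) Thus the rare pairs of $G_t[S]$ are partitioned according to the value of $\pi(\cdot)$, which ranges over a subset of $S$.

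Finally I would carry out the summation: on $\mathcal{E}$, for each fixed $u \in S$ the number of rare pairs attached to $u$ is precisely the number of rare pairs involving the circles $v_{t_1}, \ldots, v_{t_{k(u)}}$ associated with $u$, which by Lemma~\ref{lem:rare_pairs}(b) is less than $\ell^2 \epsilon$. Summing over the $|S|$ vertices of $S$ gives at most $|S| \ell^2 \epsilon$ rare pairs in $G_t[S]$, as claimed; since $\mathcal{E}$ holds a.a.s., so does the conclusion. There is essentially no substantive obstacle here — the statement is a direct corollary — and the only point that needs a little care is confirming that the attribution map $\pi$ is unambiguous, so that each rare pair of $G_t[S]$ is counted in the estimate of Lemma~\ref{lem:rare_pairs}(b) for exactly one $u \in S$ and no double counting inflates the sum.
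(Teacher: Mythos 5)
Your proof is correct and matches the paper's intent: the paper itself gives no explicit proof, stating only that the corollary ``immediately'' follows from Lemma~\ref{lem:rare_pairs}, and your argument is precisely the straightforward elaboration one would expect — fix the a.a.s.\ event on which Lemma~\ref{lem:rare_pairs} holds simultaneously for all vertices, observe that each rare pair $(v_{t_i},u_s)$ is attributed to the single vertex covered by the partner square $u_{t_i}$, note that every rare pair counted in $G_t[S]$ has this vertex in $S$, and sum the per-vertex bound $\ell^2\epsilon$ over $u\in S$. Your explicit care about the unambiguity of the attribution map and the deterministic, $S$-free nature of the event (so no union bound over subsets is needed) is exactly the right point to check, and it holds.
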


Our next observation shows that one can remove only a few edges from $G_t[S]$ in order to destroy all rare pairs.

\begin{lemma}\label{lem:destroying_rare_pairs}
Suppose that $t = t(n)$ is such that $t=n^{1+o(1)}$ and $t \ll n \log n$. Let $\ell = \ell(n)$ and $\epsilon = \epsilon(n)$ be defined as in Theorem~\ref{thm:small_t}. Then, a.a.s.\  for any set $S \subseteq [n]$, one can remove at most $2 |S| \ell \sqrt{\epsilon}$ edges from $G_t[S]$ to remove all rare pairs.
\end{lemma}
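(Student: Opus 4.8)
The plan is to reduce the edge-deletion statement to the rare-pair count from Corollary~\ref{cor:rare_pairs} via a simple double-counting / high-degree-vertex argument. First I would observe that a rare pair $(v_{t_i},u_s)$ is ``witnessed'' by the circle $v_{t_i}$: it records a square landing on the vertex that $v_{t_i}$ covers, arriving after round $t_i$. The edge we are entitled to delete to kill this rare pair is the edge $(u_{t_i},v_{t_i})$ of $G_t$, i.e.\ the edge whose circle endpoint is $v_{t_i}$. Deleting that single edge simultaneously destroys \emph{every} rare pair whose first coordinate is $v_{t_i}$. So it suffices to bound the number of distinct circles that are the first coordinate of at least one rare pair inside $G_t[S]$ — but actually we want something sharper, because a naive bound on that number of circles is only $O(|S|\ell)$, not $O(|S|\ell\sqrt\epsilon)$.

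The sharpening comes from noting that a circle $v_{t_i}$ lying on a vertex $x$ is in exactly $r_x$ rare pairs, where $r_x$ is the number of squares that land on $x$ after $v_{t_i}$ is placed; and if $x$ carries many circles, the corresponding rare-pair counts add up fast. Concretely: for a vertex $x$, let $c_x$ be the number of circles of $G_t[S]$ on $x$ and let $q_x$ be the number of squares on $x$. Each of the $c_x$ circles on $x$ is in at most $q_x$ rare pairs, so the total number of rare pairs with first coordinate on $x$ is at most $c_x q_x$; summing, $\sum_x c_x q_x \le |S|\ell^2\epsilon$ by Corollary~\ref{cor:rare_pairs}. To destroy every rare pair it is enough, for each $x$, to delete all edges whose circle endpoint sits on a vertex $x$ that receives at least one late square; equivalently delete, for each $x$ with $q_x\ge 1$, the (at most $c_x$) edges with circle on $x$. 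The number of edges deleted is $\sum_{x:\,q_x\ge1} c_x$. I would split this sum at the threshold $q_x \ge 1/\sqrt\epsilon$ versus $1\le q_x < 1/\sqrt\epsilon$ — wait, more cleanly: for a vertex $x$ with $q_x\ge1$ we have $c_x \le c_x q_x \cdot \tfrac{1}{\max(q_x,1)}$... Let me instead use the cleaner split. For vertices with $q_x \ge \ell\sqrt\epsilon$: here $c_x \le c_x q_x/(\ell\sqrt\epsilon)$, so these contribute at most $\tfrac{1}{\ell\sqrt\epsilon}\sum_x c_x q_x \le \tfrac{|S|\ell^2\epsilon}{\ell\sqrt\epsilon} = |S|\ell\sqrt\epsilon$. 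For vertices with $1\le q_x < \ell\sqrt\epsilon$: here we instead bound using $q_x\ge1$, and I need the number of circles on such vertices; but in fact I don't delete edges at all for these — instead I argue differently.

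Let me reorganize: the cleanest route is to delete, for each vertex $x$, \emph{all} the circle-edges on $x$ whenever the total rare-pair count $c_x q_x$ at $x$ exceeds nothing — no. The actually-correct and simple argument: to kill all rare pairs at $x$ it suffices to delete the circle-edges on $x$, costing $c_x$ if $q_x\ge1$ and $0$ otherwise; but alternatively it suffices to delete the \emph{square}-edges on $x$ arriving late, of which there are $\le q_x$. So the cost at $x$ is $\min(c_x\mathbf 1_{q_x\ge1},\, q_x\mathbf 1_{c_x\ge1}) \le \sqrt{c_x q_x}\cdot\mathbf 1_{c_xq_x\ge1} \le \sqrt{c_xq_x}$ when $c_xq_x\ge 1$ — using $\min(a,b)\le\sqrt{ab}$. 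Hence the total number of deleted edges is at most $\sum_x \sqrt{c_x q_x}$. Now by Cauchy--Schwarz, $\sum_x\sqrt{c_xq_x} \le \sqrt{|S_+|}\cdot\sqrt{\sum_x c_xq_x}$, where $S_+$ is the set of vertices with $c_xq_x\ge1$, and $|S_+|\le |S|\ell$ trivially (wait: $|S_+|$ could involve vertices outside $S$ carrying circles/squares — but circles of $G_t[S]$ sit on vertices of $S$, and a rare pair of $G_t[S]$ has its square endpoint also in $S$, so $S_+\subseteq S$; moreover $\sum_x c_x \le |S|\ell$ since each vertex of $S$ carries $\le\ell$ circles by Lemma~\ref{lem:squares}(b), and $c_xq_x\ge1\Rightarrow c_x\ge1$, so $|S_+|\le\sum c_x\le|S|\ell$). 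Therefore the number of deleted edges is at most $\sqrt{|S|\ell}\cdot\sqrt{|S|\ell^2\epsilon} = |S|\ell^{3/2}\sqrt\epsilon$, which is a touch weaker than claimed. To recover the factor $2|S|\ell\sqrt\epsilon$ I would be more careful: bound $|S_+|$ not by $\sum c_x$ but just by $|S|$ (each vertex counted once!), giving $\sqrt{|S|}\cdot\sqrt{|S|\ell^2\epsilon} = |S|\ell\sqrt\epsilon \le 2|S|\ell\sqrt\epsilon$. That closes it.

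The main obstacle — and the step I would double-check most carefully — is the combinatorial claim that deleting, at each vertex $x$, the cheaper of ``all late square-edges on $x$'' or ``all circle-edges on $x$'' really destroys every rare pair: one must verify that every rare pair $(v_{t_i},u_s)$ of $G_t[S]$ is incident, \emph{as an edge of $G_t[S]$}, to one of the deleted edges, namely either to the circle-edge $(u_{t_i},v_{t_i})$ on $x$ or to the square-edge $(u_s,v_s)$ with $u_s$ on $x$; and that both of these edges indeed lie in $G_t[S]$ (they do, since both endpoints of each are in $S$: $v_{t_i}, u_s\in S$ as they cover a vertex $x\in S$, and their partners are in $S$ because we only ever consider edges of $G_t[S]$). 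I would also confirm the elementary inequality $\min(a,b)\le\sqrt{ab}$ is applied only when at least one of $a,b$ is $\ge1$ and the other is $\ge1$ too (which holds on $S_+$), and that the Cauchy--Schwarz step uses $|S_+|\le|S|$ rather than the cruder $\sum c_x$. Everything else is bookkeeping: invoking Lemma~\ref{lem:squares}(b) for $c_x\le\ell$, invoking Corollary~\ref{cor:rare_pairs} for $\sum_x c_xq_x\le|S|\ell^2\epsilon$ a.a.s., and the union over $S$ being free because both of those hold a.a.s.\ simultaneously for all $S$.
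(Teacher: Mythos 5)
There is a genuine gap in your argument, and it is in the step you flagged as the one you would check last (the bookkeeping), not the combinatorial step you said you would check most carefully. You write that the total number of rare pairs hosted at $x$ is at most $c_x q_x$, and then conclude ``summing, $\sum_x c_x q_x \le |S|\ell^2\epsilon$ by Corollary~\ref{cor:rare_pairs}.'' This has the inequality backwards. Corollary~\ref{cor:rare_pairs} bounds $\sum_x r_x$, where $r_x$ is the true number of rare pairs hosted at $x$; since $r_x \le c_x q_x$ (with strict inequality whenever a circle on $x$ arrives after some square on $x$), knowing $\sum_x r_x \le |S|\ell^2\epsilon$ gives you no control on $\sum_x c_x q_x$. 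So the Cauchy--Schwarz step has nothing to feed on.

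The deeper issue is that your deletion rule (delete all circle-edges on $x$, or all square-edges on $x$, whichever is cheaper) is genuinely too expensive: $\min(c_x',q_x')$ can be much larger than $O(\sqrt{r_x})$, so no amount of reorganizing the sum will rescue it. Concretely, consider a vertex $x$ whose arrival pattern is $v_1, u_1,\dots, u_9, v_2,\dots, v_{10}, u_{10}$. Here every circle and every square participates in at least one rare pair, so $c_x'=q_x'=10$ and $\min(c_x',q_x')=10$, yet $r_x = 10 + 9 = 19$ and $2\sqrt{r_x} \approx 8.72 < 10$. The optimal cover only needs $2$ edges: deleting the edge of $v_1$ and the edge of $u_{10}$ kills every rare pair. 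This is exactly what the paper's proof exploits: rather than deleting one full side, it deletes the $\lceil\sqrt{r_x}\rceil$ \emph{oldest} circles on $x$ together with the $\lceil\sqrt{r_x}\rceil$ \emph{youngest} squares on $x$, and uses a pigeonhole argument (if a rare pair survives, the oldest remaining circle and youngest remaining square generate $r_x+1$ rare pairs, a contradiction) to show this costs at most $2\sqrt{r_x}$ edges. Then the concavity of $\sqrt{\cdot}$ (equivalently your Cauchy--Schwarz, but now against $\sum_x r_x \le |S|\ell^2\epsilon$, which \emph{is} what the corollary controls) gives $\sum_x 2\sqrt{r_x} \le 2|S|\ell\sqrt{\epsilon}$. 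So the idea you need is precisely the ``cleverer vertex cover'' whose existence you conceded was the one thing to double-check; the naive cover by a full side does not have the $O(\sqrt{r_x})$ property.
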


\begin{proof}
Suppose that some vertex $w$ yields $r$ rare pairs. Let 
$$
R_w = \{ (v_{t_i}, u_{s_i}) : v_{t_i} = u_{s_i} = w \text{ and } s_i > t_i \}
$$
be the set of  rare pairs associated with vertex $w$, let $V_w = \{ v_{t_i} : (v_{t_i}, u_{s_i}) \in R_w \text{ for some } u_{s_i} \}$ be the set of the associated circles, and let $U_w = \{ u_{s_i} : (v_{t_i}, u_{s_i}) \in R_w \text{ for some } v_{t_i} \}$ be the set of the associated squares. We will first show that one can remove at most $2 \sqrt{r}$ edges to destroy all rare pairs from $R_w$. 

If $|V_w| \le \sqrt{r}$, then one can remove all edges associated with circles from $V_w$ which clearly destroys all rare pairs from $R_w$. Similarly, if  $|U_w| \le \sqrt{r}$, then one can achieve the same by removing all edges associated with squares from $U_w$. We may then assume that $|V_w| > \sqrt{r}$ and that $|U_w| > \sqrt{r}$.

Let $\hat{s}$ be the largest integer from $[t]$ with the property that $\hat{U}_w = \{ u_{s_i} \in U_w : s_i \ge \hat{s} \}$ has cardinality $\sqrt{r}$. In other words, $\hat{U}_w \subseteq U_w$ consists of the $\sqrt{r}$ ``youngest'' squares from $U_w$. Similarly, let $\hat{t}$ be the smallest integer from $[t]$ with the property that $\hat{V}_w = \{ v_{t_i} \in V_w : t_i \le \hat{t} \}$ has cardinality $\sqrt{r}$. In other words, $\hat{V}_w \subseteq V_w$ consists of the $\sqrt{r}$ ``oldest'' circles from $V_w$. Let us remove all edges associated with squares from $\hat{U}_w$, and let us remove all edges associated with circles from $\hat{V}_w$, (or both), for a total of at most $2 \sqrt{r}$ edges. We claim that this procedure removes all rare pairs from $R_w$. 

For a contradiction, suppose that a rare pair $(v_{t_i}, u_{s_i})$ is not removed. In particular, $t_i > \hat{t}$ and $s_i < \hat{s}$. On the other hand, by the definition of being a rare pair, $t_i < s_i$. We conclude that $\hat{t} < \hat{s}$. But it means that each circle from $\hat{V}_w$ forms a rare pair with any square from $\hat{U}_w$ for a total of $\sqrt{r} \cdot \sqrt{r} = r$ rare pairs. With the additional rare pair $(v_{t_i}, u_{s_i})$, there are at least $r+1$ rare pairs which gives us the desired contradiction, and the claim is proved: one can remove at most $2 \sqrt{r}$ edges to destroy all rare pairs from $R_w$. 

\medskip

Consider any set $S \subseteq [n]$. By Corollary~\ref{cor:rare_pairs}, since we aim for a statement that holds a.a.s., we may assume that $G_t[S]$ yields at most $|S| \ell^2 \epsilon$ rare pairs, that is, $\sum_{w \in S} r_w \le |S| \ell^2 \epsilon$, where $r_w$ is the number of rare pairs associated with vertex $w$. By the above observation, one may remove at most $\sum_{w \in S} 2 \sqrt{r_w}$ edges from $G_t[S]$ to destroy all of them. Clearly, the optimization problem
$$
\max \sum_{w \in S} 2 \sqrt{r_w} \qquad \text{ subject to } \qquad \sum_{w \in S} r_w \le |S| \ell^2 \epsilon \text{ and } r_w \ge 0 \text{ for all } w \in S
$$
attains its maximum when all $r_w$'s are equal. We conclude that it is possible to remove at most 
$$
\sum_{w \in S} 2 \sqrt{r_w} \le \sum_{w \in S} 2 \sqrt{ \ell^2 \epsilon } = 2 |S| \ell \sqrt{\epsilon}
$$
edges from $G_t[S]$ to destroy all rare pairs, and the proof of the lemma is finished.
\end{proof}

Now, we are ready to finish the proof of Theorem~\ref{thm:small_t}.

\begin{proof}[Proof of Theorem~\ref{thm:small_t}(b)] 
Let us apply \emph{any} strategy to play the game. For a contradiction, suppose that there exists a $K_{k'}$ at time $t$, where $k' = \ell (1+4\epsilon^{1/4})$ as in the statement of the theorem. Recall that $\ell \sim \log n / \log \beta$ and $\epsilon = \epsilon(n) = o(1)$. It is also straightforward to check that $\ell \epsilon^{1/4}\to\infty$ as $n \to \infty$. Let $S \subseteq [n]$ be any set of cardinality $k'$ that induces a complete graph. Since we aim for a statement that holds a.a.s., we may apply Lemma~\ref{lem:destroying_rare_pairs}. It follows that one can remove at most $2k'\ell\epsilon^{1/2} < 4 \ell^2 \epsilon^{1/2}$ edges from $G_t[S]$ in order to destroy all rare pairs. Note that after that operation, set $S$ satisfies the following properties:
\begin{itemize}
\item [(a)] $S$ has cardinality $k' = \ell (1+4\epsilon^{1/4})$,
\item [(b)] the number of edges induced by $S$ (and so also the number of squares) is more than
$$
\binom{k'}{2} - 4 \ell^2 \epsilon^{1/2} = \binom{\ell}{2} + 4 \ell^2 \epsilon^{1/4} + \binom{4 \ell \epsilon^{1/4}}{2} - 4 \ell^2 \eps^{1/2} > \binom {\ell}{2} + 4 \ell^2 \epsilon^{1/4},
$$
(note that the first equality is due to a simple fact that $\binom{a+b}{2}= \binom{a}{2} + ab + \binom{b}{2}$ for $a,b\in \mathbb N$)
\item [(c)] $S$ induces no rare pair,
\item [(d)] there are at most $\ell$ squares on any vertex.
\end{itemize}

Let us now remove all edges induced by $S$ and put them back, one by one, following the order they appeared during the semi-random process. We will distinguish $k'$ phases. The first phase starts when the circle associated with the first edge lands on vertex $v_1 \in S$. Since there are no rare pairs (property~(c)), no square will land on $v_1$ but other circles might end up there. The first phase continues as long as circles continue landing on $v_1$. The second phase starts when some circle lands on vertex $v_2 \neq v_1$. Note that, since all edges introduced during the first phase are edges of the graph and all the circles cover vertex $v_1$, all squares cover unique vertices at the beginning of the second phase. In particular, there is at most one square on vertex $v_2$ when the first circle is placed on $v_2$. 

In general, phase~$i$ starts when some circle is placed on vertex $v_i \not\in \{v_1, v_2, \ldots, v_{i-1}\}$. Arguing as above, we conclude that at that point there are at most $i-1$ squares on $v_i$, and no more squares can land on it in the future. This is a useful upper bound for the number of squares, provided that $i \le \ell$. For larger values of $i$ (that is, $\ell < i \le k' = \ell (1+4\epsilon^{1/4})$), we may apply property~(d). We conclude that the number of squares is at most $\binom{\ell}{2} + 4\epsilon^{1/4} \ell \cdot \ell$, which contradicts property~(b). This finishes the proof of the theorem.
\end{proof}

\medskip

Suppose now that $t=t(n) = \gamma n \log n$ for some $\gamma \in (0,\infty)$. The upper bound of $k'_2$ in Theorem~\ref{thm:large_t}(b) is trivial and the argument above can be easily adjusted to show the upper bound of $k'_1$. We carefully explain the adjustment needed below.

\begin{proof}[Proof of Theorem~\ref{thm:large_t}(b)] 
First, let us note that the upper bound of $k'_2 = 2\ell+1$ is indeed trivial. For a contradiction, suppose that some set $S$ of cardinality at least $2\ell+2$ induces a complete graph on $(2\ell+2)(2\ell+1)/2$ edges (and so it induces that many squares). Hence, by averaging argument, there is a vertex with at least $(2\ell+1)/2 > \ell$ squares which contradicts Lemma~\ref{lem:squares}(c).

\medskip

It remains to prove the upper bound of $k'_1$. As in Lemma~\ref{lem:rare_pairs}, we first need to upper bound the number of rare pairs generated by one vertex. The probability that a given vertex $u$ generates at least $c \ell^2$ rare pairs is at most
\begin{eqnarray*}
p &:=& \binom {t}{c\ell^2} \left( \frac {\ell}{n} \right)^{c\ell^2} \le \left( \frac {e \gamma n \log n}{c\ell^2} \cdot \frac {\ell}{n} \right)^{c \ell^2} = \left( \frac {e \gamma \log n}{c\ell} \right)^{c \ell^2} = \left( \frac {e}{c\xi} \right)^{c \ell^2} \\
&=& \exp \left( - c (\xi \gamma \log n)^2 \log \left( \frac {c\xi}{e} \right) \right) \\
&=& \exp \left( - \Theta( \log^2 n) \log \left( 1 + \frac {1}{\sqrt{\log n}} \right) \right) \\
&=& \exp \left( - \Theta( \log^{3/2} n) \right) = o (1/n),
\end{eqnarray*} 
when $c = (e/\xi) (1+\log^{-1/2} n)$. (Note that $\log(1+x) = x+\bigo(x^2)$.) By the union bound over all vertices $u$ we get that a.a.s.\ no vertex generates at least $c \ell^2$ rare pairs. We conclude that a.a.s.\ for any set $S \subseteq [n]$, $G_t[S]$ induces at most $c \ell^2 |S|$ rare pairs (the counterpart of Corollary~\ref{cor:rare_pairs}). Lemma~\ref{lem:destroying_rare_pairs} still applies and we get that a.a.s.\ for any set $S$, one can remove at most $2 \sqrt{c} \ell |S|$ edges from $G_t[S]$ to remove all rare pairs. 

We finish the proof as before. For a contradiction, suppose that there exists a complete graph on $k' = \ell (1 + b c^{1/4})$ vertices, where $b$ is a constant that will be properly tuned soon. We may assume that $1 + b c^{1/4} \le 2$; otherwise, the trivial upper bound of $k'_2 = 2\ell+1$ applies. A.a.s.\ for any set $S$ of cardinality $k'$, after destroying all rare pairs from $G_t[S]$, the number of edges left is at least
\begin{eqnarray*}
\binom{k'}{2} - 2 \sqrt{c} \ell k'
&=& 
\binom{\ell}{2} + b c^{1/4} \ell^2 + \binom{b c^{1/4} \ell}{2} - 2 \sqrt{c} \ell k' \\
&>& \binom {\ell}{2} + b c^{1/4} \ell^2 + \frac {b^2 c^{1/2} \ell^2}{2} \left( 1 - \bigo\left( \frac {1}{\ell} \right) \right) - 4 \sqrt{c} \ell^2 \\
&=& \binom {\ell}{2} + b c^{1/4} \ell^2 + \frac {c^{1/2} \ell^2}{2} \left( b^2 - 8 - \bigo\left( \frac {1}{\log n} \right) \right) \\
&>& \binom {\ell}{2} + b c^{1/4} \ell^2,
\end{eqnarray*} 
when, for example, $b = \sqrt{8} (1 + \log^{-1/2} n)$. As before, we get the desired contradiction which implies the upper bound of
\begin{eqnarray*}
k' = \Big(1 + b c^{1/4}\Big) \ell &=& \Big(1 + 2 \sqrt{2} (e/\xi)^{1/4} (1 + \log^{-1/2} n)^2 \Big) \ell \\
&\le& \Big(1 + 2 \sqrt{2} (e/\xi)^{1/4} \Big) \ell \Big(1 + 3 \log^{-1/2} n \Big) = k'_1.
\end{eqnarray*} 
This finishes the proof of the theorem.
\end{proof}

%%%%%%%%%%%%%%%%%%%%%%%%%%%%%%%%%%%%%%%%%%%%%%%%%
\section{Chromatic Number}\label{sec:chi}
%%%%%%%%%%%%%%%%%%%%%%%%%%%%%%%%%%%%%%%%%%%%%%%%%

Parts~(a) in Theorems~\ref{thm:chi_small_t}, \ref{thm:chi_large_t} and~\ref{thm:chi_very_large_t} follow immediately from our results for complete graphs, namely, parts (a) in Theorems~\ref{thm:small_t} and~\ref{thm:large_t}, and Observation~\ref{obs:cliques2}. Parts~(b) will follow from upper bounds for the number of squares that land on vertices, Lemma~\ref{lem:squares}.

\medskip

Let us start with some useful basic facts about degeneracy of graphs. Recall that for a given $d \in \N$, a graph $H$ is \textbf{$d$-degenerate} if every sub-graph $H'\subseteq H$ has minimum degree $\delta(H') \le d$ (where the minimum degree of a graph is the minimum degree over all vertices). The \textbf{degeneracy} of $H$ is the smallest value of $d$ for which $H$ is $d$-degenerate.

The \textbf{$d$-core} of a graph $H$ is the maximal induced subgraph $H'\subseteq H$ with minimum degree $\delta(H')\ge d$. (Note that the $d$-core is well defined, though it may be empty. Indeed, if $S \subseteq V(H)$ and $T \subseteq V(H)$ induce sub-graphs with minimum degree at least $d$, then the same is true for $S \cup T$.) If $H$ has degeneracy $d$, then it has a non-empty $d$-core. Indeed, by definition, $H$ is \emph{not} $(d-1)$-degenerate and so it has a sub-graph $H'$ with $\delta(H')\ge d$. Moreover, it follows immediately from the definition that if $H$ has degeneracy $d$, then there exists a permutation of the vertices of $H$, $(v_1, v_2, \ldots , v_k)$, such that for each $\ell \in [k]$ vertex $v_\ell$ has degree at most $d$ in the sub-graph induced by the set $\{v_1, v_2, \ldots, v_{\ell}\}$. Indeed, one can define such permutation recursively. Let $v_k$ be any vertex in $H$ that is of degree at most $d$. Then, let $v_{k-1}$ be any vertex of degree at most $d$ in the graph $H'$ induced by the set $\{v_1, v_2, \ldots, v_{k-1}\}$, etc.

The above properties imply a useful reformulation of degeneracy: a graph $H$ is $d$-degenerate if and only if the edges of $H$ can be oriented to form a directed acyclic graph $D$ with maximum out-degree at most $d$. In other words, there exists a permutation of the vertices of $H$, $(v_1, v_2, \ldots, v_k)$, such that for every directed edge $(v_i,v_j)\in D$ we have $i > j$ and the out-degrees are at most $d$. As a consequence, we get another well-known but useful property: for any $d$-degenerate graph $H$ we have $\chi(H) \le d+1$. Indeed, one may colour vertices of $H$ greedily using the permutation $(v_{k}, v_{k-1}, \ldots, v_1)$.

\medskip

With these properties, we maye easily prove Theorems~\ref{thm:chi_small_t} and~\ref{thm:chi_large_t}.

\begin{proof}[Proof of Theorems~\ref{thm:chi_small_t}(b) and~\ref{thm:chi_large_t}(b)]
Fix an arbitrary strategy for the player, and consider graph $G_t$ generated at time $t$. Let $X_v=X_v(n)$ be the number of squares that land on $v$ until time $t$.
By Lemma~\ref{lem:squares}, we know that a.a.s.\ 
$$
\max \Big( X_v : v \in [n] \Big) \ \le\ \ell.
$$
Since we aim for a statement that holds a.a.s., we may assume that this property is satisfied. Let $S \subseteq [n]$ be any subset of vertices. Since each edge connects a square with a circle, $G_t[S]$ induces at most $|S|\ell$ edges and so the average degree in $G_t[S]$ is at most $2\ell$. It follows that $\delta(G_t[S]) \le 2\ell$ for any $S\subseteq [n]$ and so $G_t$ is $2\ell$-degenerate. The above observation implies that $\chi(G_t) \le 2\ell +1$ which finishes the proof of the theorem.
\end{proof}

%%%%%%%%%%%%%%%%%%%%%%%%%%%%%%%%%%%%%%%%%%%%%%%%%
\section{Independent Sets}\label{sec:alpha}
%%%%%%%%%%%%%%%%%%%%%%%%%%%%%%%%%%%%%%%%%%%%%%%%%

%%%%%%%%%%%%%%%%%%%%%%%%%%%%%%%%%%%%%%%%%%%%%%%%%
\subsection{Upper Bound}
%%%%%%%%%%%%%%%%%%%%%%%%%%%%%%%%%%%%%%%%%%%%%%%%%

We will first prove an upper bound that not only implies Theorem~\ref{thm:alpha_large_t}(a) and Theorem~\ref{thm:alpha_very_large_t}(a) but also provides a good upper bound when the average degree of $G_t$ is a constant, especially when that constant is large. Having said that, we do not tune our argument to get the best possible bound but rather aim for an easy argument that provides the upper bound that matches the lower bound when the average degree tends to infinity as $n \to \infty$. 

\begin{lemma}
Suppose that $t = t(n) = \Omega(n)$. Let $\lambda = \lambda(n) = t/n$, let $\ell = \ell(n) = \lambda - \sqrt{5 \lambda \log \lambda}$, and let $k = k(n) = 2 \lceil \ell \rceil + 1$. Finally, let 
\begin{itemize}
\item $u = u(n) = \lceil n/k \rceil = \left\lceil \frac {n}{2\lambda} (1 + \bigo( \sqrt{\log \lambda / \lambda} ) ) \right\rceil$, if $\lambda \gg n^{2/5}$,
\item $u = u(n) = \lceil n/k \rceil ( 1 + k^2 \sqrt{\log \lambda}/\lambda^{5/2} ) = \frac {n}{2\lambda} (1 + \bigo( \sqrt{\log \lambda / \lambda} ) )$, if $1 \ll \lambda = \bigo( n^{2/5} )$,
\item $u = u(n) = n \left( \frac {1}{2 \lceil \ell \rceil + 1}  + \frac {2 \lceil \ell \rceil}{\lambda^{5/2}} \right) + n^{3/4}$, if $\lambda = \Theta(1)$.
\end{itemize}
Then, there exists a strategy that a.a.s.\ creates $G_t$ such that $\alpha(G_t) \le u$. 
\end{lemma}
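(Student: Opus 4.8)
The plan is to design a ``partition and cover'' strategy: partition the vertex set $[n]$ into $u$ blocks, each of size exactly $k=2\lceil\ell\rceil+1$ (discarding at most $k-1$ vertices, or handling the leftover block separately), and have the player try to make each block induce a copy of $K_k$. If every block induces a $K_k$, then any independent set can contain at most one vertex from each block, hence $\alpha(G_t)\le u$. The heart of the matter is thus to show that, within the budget of $t$ rounds, the player can a.a.s.\ complete $K_k$ inside \emph{every} block. For this I would reuse Algorithm~\ref{alg:2} applied independently and in parallel to each of the $u$ blocks: inside a fixed block on vertices $w_0,\dots,w_{2\lceil\ell\rceil}$, when the $j$-th square lands on $w_i$ the player pairs it with $w_{(i+j)\bmod(2\lceil\ell\rceil)}$, so that once each $w_i$ in the block has received $\lceil\ell\rceil$ squares the block is complete. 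Note these strategies never conflict across blocks, so the whole thing is a single legal strategy.

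The next step is the probabilistic estimate. For a fixed vertex $v$, the number $X_v$ of squares landing on $v$ by time $t$ is $\mathrm{Bin}(t,1/n)$ with mean $\lambda$, so by Chernoff's bound~(\ref{chern}),
\begin{equation*}
\Prob\big(X_v<\lceil\ell\rceil\big)\ \le\ \Prob\big(X_v\le \lambda-\sqrt{5\lambda\log\lambda}\big)\ \le\ \exp\!\left(-\frac{5\lambda\log\lambda}{2\lambda}\right)\ =\ \lambda^{-5/2}.
\end{equation*}
A block fails to be completed by time $t$ only if one of its $k$ vertices has fewer than $\lceil\ell\rceil$ squares, so by the union bound the probability a given block fails is at most $k\lambda^{-5/2}$, and the probability that \emph{some} block fails is at most $uk\lambda^{-5/2}$. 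When $\lambda\gg n^{2/5}$ this is $\bigo(n/\lambda\cdot k\cdot\lambda^{-5/2})=\bigo(1/\lambda^{1/2}\cdot\lambda/\lambda^{... })=o(1)$ directly (here $u=\lceil n/k\rceil$ suffices). When $1\ll\lambda=\bigo(n^{2/5})$ this crude union bound need not be $o(1)$, which is exactly why the three cases in the statement inflate $u$: we instead argue that a.a.s.\ the \emph{number} of failed blocks is small (its expectation is $\le uk\lambda^{-5/2}$, and by Markov, or by a concentration argument since block failures are essentially independent, a.a.s.\ at most, say, $uk^2\lambda^{-5/2}$ blocks fail), discard those failed blocks, and re-run/continue inside them; the surviving independent set then has size at most (number of good blocks) $+$ (vertices in bad blocks) $\le \lceil n/k\rceil + uk^2\lambda^{-5/2}\cdot k$, which matches the stated $u$ after absorbing constants. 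The $\lambda=\Theta(1)$ case is coarsest: here $\lambda^{-5/2}=\Theta(1)$ so a constant fraction of blocks may fail; we add the $n^{3/4}$ slack term to absorb the fluctuation in the number of failed blocks (Chernoff again, since failures across disjoint blocks are independent), and the $2\lceil\ell\rceil/\lambda^{5/2}$ term accounts for losing all $\le k$ vertices of each failed block.

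A small but necessary bookkeeping point: the rounds are \emph{not} allocated to blocks in advance—the player simply follows the block rule on whichever block the current square $u_t$ belongs to. Since every round contributes a square to exactly one block and the blocks partition $[n]$, the per-vertex count $X_v$ after exactly $t$ rounds is precisely $\mathrm{Bin}(t,1/n)$ as used above, so no wasted rounds arise and the analysis goes through at time exactly $t$. Finally, translating the bound on $k=2\lceil\ell\rceil+1$ with $\ell=\lambda-\sqrt{5\lambda\log\lambda}$ into the $\frac{n}{2\lambda}(1+\bigo(\sqrt{\log\lambda/\lambda}))$ form is the routine estimate $\frac{1}{k}=\frac{1}{2\lambda}\cdot\frac{1}{1-\sqrt{5\log\lambda/\lambda}+O(1/\lambda)}=\frac{1}{2\lambda}(1+\bigo(\sqrt{\log\lambda/\lambda}))$.

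\textbf{Main obstacle.} The genuinely delicate part is the regime $1\ll\lambda=\bigo(n^{2/5})$ (and a fortiori $\lambda=\Theta(1)$): the naive union bound over blocks is too weak, so one must control the number of failed blocks and argue that re-processing them (or simply conceding their vertices) costs only the lower-order term built into $u$. Getting the exponents to line up—so that $uk^2\sqrt{\log\lambda}/\lambda^{5/2}$ is genuinely $o(n/\lambda)$ when $\lambda=\bigo(n^{2/5})$, and the $n^{3/4}$ term dominates the failed-block fluctuations when $\lambda=\Theta(1)$—is where the care is needed; everything else is Chernoff plus the partition argument.
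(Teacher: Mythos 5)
Your approach is essentially the same as the paper's: partition $[n]$ into blocks of size $k$, run Algorithm~\ref{alg:2} independently on each block, bound the per-vertex failure probability by $\lambda^{-5/2}$ via Chernoff, and then treat the three regimes by controlling the number of failed blocks. Two small corrections are worth making. First, in the middle regime $1\ll\lambda=\bigo(n^{2/5})$, the Markov amplification factor you chose is too large: taking ``a.a.s.\ at most $uk^2\lambda^{-5/2}$ blocks fail'' and then charging $k$ lost vertices per failed block yields an error term $\approx nk^2/\lambda^{5/2}\approx 4n/\lambda^{1/2}$, which not only fails to match the stated $u$ but in fact dominates the main term $n/(2\lambda)$; the paper instead applies Markov with the gentler factor $\sqrt{\log\lambda}$ (i.e.\ a.a.s.\ $X\le\E[X]\sqrt{\log\lambda}$), so the extra cost is $kX=\bigo(n\sqrt{\log\lambda}/\lambda^{3/2})$, which is genuinely lower order and matches $u$. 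Second, in the $\lambda=\Theta(1)$ case the block-failure indicators are not independent (the square counts across blocks are jointly multinomial), but they are negatively correlated, which is exactly what the paper invokes to justify the Chernoff-type concentration; this is the correct justification to cite rather than independence.
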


\begin{proof}
Let us arbitrarily partition the set of vertices $[n]$ into $\lceil n/k \rceil$ parts, each of size at most $k = 2 \lceil \ell \rceil + 1$. We will independently apply Algorithm~\ref{alg:2} to each part. The algorithm succeeds on a given part and produces a complete graph of order $k$ if all vertices in that part receive at least $\ell$ squares at time $t$. For a given $i \in \{1,2, \ldots, \lceil n/k \rceil \}$, let $X_i$ be the indicator random variable for the event that Algorithm~\ref{alg:2} fails on part $i$, and let $X = \sum_{i=1}^{\lceil n/k \rceil} X_i$ be the number of parts that failed. 

For a given vertex $v \in [n]$, let $Y_v$ be the random variable counting the number of squares on $v$ at time $t$. Clearly, $Y_v \in  \textrm{Bin}(t,1/n)$ with $\E [Y_v] = t/n = \lambda$. It follows immediately from Chernoff's bound~(\ref{chern}) that 
$$
\Prob ( Y_v < \ell ) = \Prob \Big( Y_v <  \lambda - \sqrt{5 \lambda \log \lambda} \Big) \le \exp \left( - \frac {5 \lambda \log \lambda}{2 \lambda} \right) = 1 / \lambda^{5/2}.
$$
Since the algorithm fails on a given part if at least one vertex in that part (out of at most $k$ vertices) receives less than $\ell$ squares at time $t$, 
$$
\Prob ( X_i = 1 ) \le k / \lambda^{5/2} = \bigo ( 1 / \lambda^{3/2} ).
$$
Hence, the expected number of parts that fail can be estimated as follows:
$$
\E [ X ] \le \lceil n/k \rceil \cdot \frac {k}{\lambda^{5/2}} = \bigo ( n / \lambda^{5/2} ).
$$

If $\lambda \gg n^{2/5}$, then $\E[X] \to 0$ as $n \to \infty$ and so, by Markov's inequality, we get that a.a.s.\ $X = 0$. Since each independent set can have at most one vertex from each part (as all of them are successful a.a.s.), we get that a.a.s.\ $\alpha(G_t) \le \lceil n/k \rceil$ and the desired bound holds. 

Suppose then that $1 \ll \lambda = \bigo (n^{2/5})$. This time, by Markov's inequality we get that a.a.s.\ $X \le \E[X] \sqrt{\log \lambda} = \bigo ( n \sqrt{\log \lambda} / \lambda^{5/2} )$. As before, each independent set can have at most one vertex from each successful part and, trivially, at most $k X$ vertices from parts that failed. We get that a.a.s.
\begin{eqnarray*}
\alpha(G_t) &\le& \left\lceil \frac {n}{k} \right\rceil + k X = \frac {n}{2\lambda} (1 + \bigo( \sqrt{\log \lambda / \lambda} ) ) + \bigo ( n \sqrt{\log \lambda} / \lambda^{3/2} )  \\
&=& \frac {n}{2\lambda} (1 + \bigo( \sqrt{\log \lambda / \lambda} ) ).
\end{eqnarray*}

Finally, suppose that $\lambda = \Theta(1)$. Note first that $(X_i)$ is a sequence of negatively correlated random variables. Combining this with earlier observations, we conclude that $X$ can be stochastically upper bounded by $Y = \sum_{i=1}^{\lceil n/k \rceil} Y_i$, where $(Y_i)$ are negatively correlated Bernoulii random variables with parameter $k/\lambda^{5/2}$. It follows from Chernoff's bound~(\ref{chern1}) and the comment right after it that a.a.s.\ $X \le Y \le \frac {n}{\lambda^{5/2}} + n^{2/3}$. We use the same observation as before, namely, that each independent set can have at most one vertex from each of the $\lceil n/k \rceil - X$ successful parts and at most $k X$ vertices from the $X$ parts that failed. We get that a.a.s.
\begin{eqnarray*}
\alpha(G_t) &\le& \left( \left\lceil \frac {n}{k} \right\rceil - X \right) + k X = \left\lceil \frac {n}{k} \right\rceil + (k-1) X \\
&\le& n \left( \frac {1}{2 \lceil \ell \rceil + 1}  + \frac {2 \lceil \ell \rceil}{\lambda^{5/2}} \right) + O(n^{2/3}).
\end{eqnarray*}
This finishes the proof of the lemma.
\end{proof}

Let us note that the upper bounds we just proved are asymptotically tight when $\lambda = \lambda(n) \gg 1$. On the other hand, the above bound is not sharp when $\lambda = \Theta(1)$. There are many ways one may improve it. For example, a more careful argument could estimate the size of a largest independent set of a given part that fails (right now, we simply use a trivial upper bound of $k$). Moreover, some parts that succeed receive more squares than needed for the argument (for example, perhaps each vertex in that part receives more than $\ell$ squares). Such additional squares could be used to create slightly larger cliques. Finally, when $\lambda$ is a small constant, a better strategy could be to create a large perfect matching using the adaptive algorithm analyzed in~\cite{gao2022perfect}.

%%%%%%%%%%%%%%%%%%%%%%%%%%%%%%%%%%%%%%%%%%%%%%%%%
\subsection{Lower Bounds}
%%%%%%%%%%%%%%%%%%%%%%%%%%%%%%%%%%%%%%%%%%%%%%%%%

It is easy to prove by induction that for any graph $G=(V,E)$, $\alpha(G) \ge n / (\Delta + 1)$, where $\Delta$ is the maximum degree. Caro~\cite{caro1979new} and Wei~\cite{wei1981lower} independently proved the following, more refined, version of this observation. (See also~\cite{alon2016probabilistic}.)

\begin{observation}[\cite{caro1979new,wei1981lower}]\label{obs:ind_set}
For any graph $G=(V,E)$,
$$
\alpha(G) \ge \sum_{v \in V} \frac {1}{\deg(v)+1} \ge \frac {n}{d+1} \ge \frac {n}{\Delta+1},
$$
where $d = \frac {1}{n} \sum_{v \in V} \deg(v)$ is the average degree and $\Delta = \max_{v \in V} \deg(v)$ is the maximum degree. 
\end{observation}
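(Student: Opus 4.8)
The plan is to prove the first (and essential) inequality by the classical random-permutation argument, and then obtain the remaining two inequalities by convexity and by monotonicity of $x \mapsto 1/(x+1)$.

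First I would fix a uniformly random linear order $\pi$ on $V$ and set $I = I(\pi) := \{ v \in V : v \text{ precedes every neighbour of } v \text{ in } \pi \}$. The key point is that $I$ is always an independent set: if $uv \in E$, then $u \in I$ would force $u$ to precede $v$, while $v \in I$ would force $v$ to precede $u$, so at most one endpoint of any edge lies in $I$. Next, for a fixed vertex $v$ the event $v \in I$ depends only on the relative order, under $\pi$, of the $\deg(v)+1$ vertices consisting of $v$ and its neighbours, and by symmetry each of these is equally likely to be the $\pi$-smallest among them; hence $\Prob(v \in I) = 1/(\deg(v)+1)$. By linearity of expectation, $\E|I| = \sum_{v \in V} 1/(\deg(v)+1)$, so some order $\pi$ achieves $|I(\pi)| \ge \sum_{v\in V} 1/(\deg(v)+1)$, and since $I(\pi)$ is independent this gives $\alpha(G) \ge \sum_{v \in V} 1/(\deg(v)+1)$.

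For the second inequality, apply Jensen's inequality to the function $x \mapsto 1/(x+1)$, which is convex on $[0,\infty)$: averaging over $v \in V$ yields $\tfrac1n \sum_{v} \tfrac{1}{\deg(v)+1} \ge \tfrac{1}{\tfrac1n\sum_v \deg(v)+1} = \tfrac{1}{d+1}$, i.e. $\sum_v \tfrac{1}{\deg(v)+1} \ge \tfrac{n}{d+1}$. The last inequality $\tfrac{n}{d+1} \ge \tfrac{n}{\Delta+1}$ is immediate from $d \le \Delta$.

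Since this is a classical fact, there is no genuine obstacle; the only points that need a line of care are the verification that $I(\pi)$ is independent and the symmetry computation of $\Prob(v \in I)$, where one must argue via the relative order of $v$ together with its neighbourhood rather than via the whole permutation. If a probability-free proof is preferred, one can instead argue greedily (repeatedly remove a vertex of minimum degree together with its neighbourhood) or by induction on $|V|$ (delete a vertex of maximum degree); either substitutes cleanly for the argument above.
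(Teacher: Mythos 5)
The paper states this as a classical Observation and provides no proof of its own; it simply cites Caro, Wei, and the Alon--Spencer textbook. Your proof is correct and is precisely the standard textbook argument: the random-permutation/probabilistic-deletion computation giving $\alpha(G) \ge \sum_v 1/(\deg(v)+1)$, then Jensen's inequality (equivalently AM--HM) applied to the convex function $x\mapsto 1/(x+1)$ for the middle inequality, and $d\le\Delta$ for the last. The alternative deterministic routes you sketch (greedy removal of a minimum-degree vertex together with its closed neighbourhood, or induction deleting a maximum-degree vertex) are also valid and standard; since the paper contains no proof there is nothing further to compare against.
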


This observation immediately proves parts (b) of Theorems~\ref{thm:alpha_large_t} and~\ref{thm:alpha_very_large_t} since the average degree of $G_t$ is $d = 2 |E(G_t)| / n = 2 t / m = 2 \lambda$. As mentioned above, this simple observation is asymptotically tight when $\lambda = \lambda(n) \gg 1$. 

\medskip

We propose two improvements when $\lambda = \Theta(1)$. Observation~\ref{obs:ind_set} still applies but the lower bound of $n/(d+1)=n/(2\lambda+1)$ that holds deterministically may be improved with slightly more effort. Indeed, the existence of an independent set of size 
$$
L(G_t) := \sum_{v \in V} \frac {1}{\deg(v)+1}
$$ 
is still deterministically guaranteed. Understanding the graph parameter $\alpha(G_t)$ is challenging but $L(G_t)$ is relatively easy to deal with. In order to minimize $L(G_t)$, the player should keep the degree distribution as ``flat'' as possible; in particular, note that $L(G_t)$ is minimized when all vertices have degrees $\lfloor d \rfloor$ or $\lceil d \rceil$. However, she cannot achieve such distribution since squares arrive uniformly at random and so a.a.s.\ some vertices will receive more than $\lceil d \rceil$ squares. 

\medskip

To get a weaker lower bound we may consider an ``off-line'' version of the semi-random process, that is, let the player wait till time $t$ before placing all of her circles at once. Clearly, the original process (the ``on-line'' version) is at least as challenging to the player as its off-line counterpart, so the obtained lower bound also applies there and the lower bound for $\alpha(G_t)$ holds. 

Let $Y_k$ be the number of vertices that received $k$ squares at time $t$. It is easy to see (see Lemma~\ref{lem:squares}) that a.a.s.\ for any $k \in \N \cup \{0\}$,
$$
Y_k \ \sim\ n \frac {\lambda^k}{k!} \exp(-\lambda).
$$
The player will place her $t = \lambda n$ circles greedily on vertices with minimum degree. Let $M \in \N \cup \{0\}$ be the largest integer $m$ such that
$$
f(m) := \sum_{k=0}^{m-1} (m-k) \frac {\lambda^k}{k!} \exp(-\lambda) \le \lambda.
$$
For each $k \in \{0, 1, \ldots, M-1\}$, the player may put $M-k$ circles on each vertex with $k$ squares to make them of degree $M$. The total number of circles used so far is $(1+o(1)) n f(M)$ and the fraction of vertices of degree $M$ at this point is asymptotic to
$$
g(M) := \sum_{k=0}^M \frac {\lambda^k}{k!} \exp(-\lambda). 
$$
The remaining $(1+o(1)) n (\lambda - f(M))$ circles are places on such vertices. Once this is done there are $(1+o(1)) n h(k)$ vertices of degree $k$, where 
$$
h(k) = 
\begin{cases}
g(M) - (\lambda - f(M)) = g(M) - \lambda + f(M) & \text{ if } \qquad k=M \\
\frac {\lambda^{M+1}}{(M+1)!} \exp(-\lambda) + \lambda - f(M) & \text{ if } \qquad k=M+1 \\
\frac {\lambda^k}{k!} \exp(-\lambda) & \text{ if } \qquad k \ge M+2.
\end{cases}
$$
These observations imply the following lower bound.

\begin{lemma}
Suppose that $t = t(n) \sim \lambda n$ for some $\lambda \in (0,\infty)$. Let $\epsilon > 0$ be any (arbitrarily small) constant. Then, there is no strategy that a.a.s.\ creates $G_t$ such that 
$$
\alpha(G_t) < (1-\epsilon) \, n \sum_{k \ge M} \frac {h(k)}{k+1}.
$$ 
\end{lemma}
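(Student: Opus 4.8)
The plan is to combine the Caro--Wei lower bound (Observation~\ref{obs:ind_set}) with the fact that, whatever the player does, the degree of each vertex is dominated by a \emph{strategy-independent} quantity; this reduces the problem of minimising $\alpha(G_t)$ to a deterministic resource-allocation problem on the random square profile, whose optimum is attained by the greedy ``water-filling'' placement discussed just before the statement.

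Fix an arbitrary strategy and let $G_t$ be the graph it produces. For $v\in[n]$ let $S_v$ be the number of squares on $v$ and $C_v$ the number of circles the strategy places on $v$. Since squares arrive uniformly at random, the vector $S=(S_v)_{v\in[n]}$ does \emph{not} depend on the strategy, whereas $\sum_v S_v=\sum_v C_v=t$; moreover every edge of $G_t$ incident to $v$ arises from a round in which $v$ is the square or the circle, so $\deg_{G_t}(v)\le S_v+C_v$ for every $v$ (passing to the simple loopless graph, as we do when speaking of $\alpha$, only decreases degrees). Hence, by Observation~\ref{obs:ind_set},
\[
\alpha(G_t)\ \ge\ \sum_{v\in[n]}\frac{1}{\deg_{G_t}(v)+1}\ \ge\ \sum_{v\in[n]}\frac{1}{S_v+C_v+1}\ \ge\ \Phi(S),
\]
where $\Phi(S)$ denotes the minimum of $\sum_{v\in[n]}1/(S_v+C'_v+1)$ over all nonnegative-integer vectors $(C'_v)_{v\in[n]}$ with $\sum_v C'_v=t$. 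The crucial point is that $\Phi(S)$ depends on $S$ only. Thus it suffices to prove that a.a.s.\ $\Phi(S)\ge(1-\epsilon)\,n\sum_{k\ge M}h(k)/(k+1)$; as this event concerns only the randomness of the squares, it then holds a.a.s.\ for every strategy, and no union bound over strategies is needed.

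The second step is to identify the minimiser in the definition of $\Phi(S)$. Since $x\mapsto 1/(x+1)$ is convex and decreasing on the nonnegative integers, the separable convex resource-allocation problem defining $\Phi(S)$ is solved by the greedy rule: add the $t$ circles one at a time, each onto a vertex $v$ of currently smallest value $S_v+C'_v$. This is the standard exchange argument for minimising a separable function of integer-convex type, and it is precisely the ``fill the lowest level first'' placement described right before the statement; hence $\Phi(S)$ equals the value of that placement, namely: raise every vertex to level $M$ and then lift as many level-$M$ vertices to level $M+1$ as the remaining budget $t=(1+o(1))\lambda n$ permits, with $M$ and $f,g,h$ as defined above. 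For the concentration step, Lemma~\ref{lem:squares}(a) and the negative correlation of the events $\{S_v=k\}$ give, for each fixed $k$, $\E\,Y_k=(1+o(1))\,n\lambda^k e^{-\lambda}/k!$ and $\Var Y_k\le\E\,Y_k=\Theta(n)$, so $Y_k\sim n\lambda^k e^{-\lambda}/k!$ a.a.s.; fixing a large constant $K_0>M$ for which $\sum_{k>K_0}\lambda^k e^{-\lambda}/k!$ is as small as desired, a union bound over $k\le K_0$ (together with $\sum_{k\le K_0}Y_k=n-\#\{v:S_v>K_0\}$ to handle the tail) shows that a.a.s.\ the empirical distribution of $(S_v)_{v\in[n]}$ is within $o(1)$ of $\mathrm{Poisson}(\lambda)$ over the relevant range. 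On this a.a.s.\ event the greedy placement leaves $(1+o(1))\,n\,h(k)$ vertices of degree $k$ for each $k$, whence $\Phi(S)=(1+o(1))\,n\sum_{k\ge M}h(k)/(k+1)$, which is at least $(1-\epsilon)\,n\sum_{k\ge M}h(k)/(k+1)$ for all large $n$ because $\sum_{k\ge M}h(k)/(k+1)$ is a fixed positive constant. Combined with the first step, $\alpha(G_t)\ge(1-\epsilon)\,n\sum_{k\ge M}h(k)/(k+1)$ a.a.s., which is the assertion.

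The main obstacle is the coupling of the last two steps. One cannot replace the integer allocation problem by its continuous relaxation: the fractional water level beats the integral one by a positive constant fraction in general, so the relaxation fails to yield the sharp constant $\sum_{k\ge M}h(k)/(k+1)$, and the integer-convexity/exchange argument pinning $\Phi(S)$ to the greedy value is genuinely needed. After that, the remaining work is bookkeeping: pushing the $o(1)$ errors of the empirical square distribution through the bounded-length water-filling computation so as to land exactly on $\sum_{k\ge M}h(k)/(k+1)$, with a little extra care at the borderline $f(M)=\lambda$, where the threshold sits on an integer and the slack in the hypothesis $t\sim\lambda n$ (rather than $t=\lambda n$) has to be absorbed.
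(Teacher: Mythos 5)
Your argument is correct and follows essentially the same route as the paper: bound $\alpha(G_t)$ from below by the Caro--Wei sum, note that $\deg(v)\le S_v+C_v$ with the square vector $S$ strategy-independent, and then pass to the minimum over all integer circle allocations, which the greedy ``water-filling'' placement attains and whose value concentrates on $n\sum_{k\ge M}h(k)/(k+1)$ by the Poisson approximation of the square profile. The $\Phi(S)$ reformulation is a tidy way to make explicit why no union bound over strategies is needed, but it is the same off-line/greedy argument the paper sketches.
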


\medskip

Finally, we analyze how small $L(G_t)$ can get for the original (``on-line'') semi-random process. It is easy to see that in order to minimize $L(G_t)$ the player needs to apply a greedy strategy. In this strategy, in each round $s \le t$ of the process, the player puts a circle on a vertex with minimum degree; if there is more than one such vertex to choose from, the decision which one to select is made arbitrarily.

Note that in each round $s \le t$, the minimum degree in $G_s$ is at most the average degree, that is, at most $2s/n \le  2t/n = 2 \lambda$ so the player will never put a circle on a vertex of degree more than $2 \lambda$. In the greedy strategy, we distinguish phases by labelling them with integers $q \in \{0, 1, \ldots, r\}$, where $r = \lfloor 2 \lambda \rfloor$. During the $q$th phase, the minimum degree in $G_s$ is equal to $q$. In order to analyze the evolution of the semi-random process, we will track the following sequence of $r+1$ variables: for $0 \le i \le r$, let $Y_i = Y_i(s)$ denote the number of vertices in $G_s$ of degree $i$. 

Phase~$0$ starts at the beginning of the process. Since $G_0$ is empty, $Y_0(0) = n$ and $Y_i(0) = 0$ for $1 \le i \le r$. There are initially many isolated vertices but they quickly disappear. Phase~$0$ ends at time $s$ which is the smallest value of $s$ for which $Y_0(s) = 0$. The DEs method (see Subsection~\ref{sec:DEs}) will be used to show that a.a.s.\ Phase~$0$ ends at time $s_0 \sim x_0 n$, where $x_0$ is an explicit constant which will be obtained by investigating the associated system of DEs. Moreover, the number of vertices of degree $i$ ($1 \le i \le r$) at the end of this phase is well concentrated around some values that are also determined based on the solution to the same system of DEs: a.a.s.\ $Y_i(s_0) \sim y_i(x_0) n$. With that knowledge, we move on to Phase~$1$ in which we prioritize vertices of degree 1. 

Consider any Phase~$q$, where $q \in \{0, 1, \ldots, r\}$. This phase starts at time $s_{q-1}$, exactly when the previous phase ends (or at time $s_{-1} := 0$ if $q=0$). At that point, the minimum degree of $G_{s_{q-1}}$ is $q$, so $Y_i(s) = 0$ for any $s \ge s_{q-1}$ and $i < q$. Hence, we only need to track the behaviour of the remaining $r+1-q$ variables. Let us denote $H(s) = (Y_q(s), Y_{q+1}(s), \ldots, Y_{r}(s))$. Let $\delta_A$ be the Kronecker delta for the event $A$, that is, $\delta_A = 1$ if $A$ holds and $\delta_A=0$ otherwise. Then, for any $i$ such that $q \le i \le r$, 
\begin{align}\label{eq:min_degree_trend} % `Trend Hypothesis' 
\E \Big( Y_i(s+1) - Y_i(s) ~|~ H(s) \Big) &= -\delta_{i=q} + \delta_{i=q+1} - \frac {Y_i(s)}{n} + \delta_{i \ge q+1} \cdot \frac {Y_{i-1}(s)}{n}.
\end{align}
Indeed, since the circle is put on a vertex of degree $q$, we always lose one vertex of degree $q$ (term $-\delta_{i=q}$) that becomes of degree $q+1$ (term $\delta_{i=q+1}$). We might lose a vertex of degree $i$ when the square lands on a vertex of degree $i$ (term $Y_i(s)/n$). We might also gain one of them when the square lands on a vertex of degree $i-1$ (term, $Y_{i-1}(s)/n$); note that this is impossible if $i = q$ (term $\delta_{i \ge q+1}$). This suggests the following system of DEs: for any $i$ such that $q \le i \le r$, 
\begin{align}
y'_i(x) &= -\delta_{i=q} + \delta_{i=q+1} - y_i(x) + \delta_{i \ge q+1} \cdot y_{i-1}(x). \label{eq:DEs_min_degree}
\end{align}

It is easy to check that the assumptions of the DEs method are satisfied (we omit details since we did not formally introduce the tool). The conclusion is that a.a.s.\ during the entire Phase~$q$, for any $q \le i \le r$), $|Y_{i}(s) -y_{i}(s/n) n| = o(n)$. In particular, Phase~$q$ ends at time $s_q \sim x_q n$, where $x_q > x_{q-1}$ is the solution of the equation $y_q(x)=0$. Using the final values $y_i(x_q)$ in Phase~$q$ as initial values for Phase~$q+1$ we can repeat the argument inductively moving from phase to phase. 

We stop the analysis at the end of Phase $r$ when a graph of minimum degree equal to $r+1$ is reached. As discussed earlier, it happens at time $s > t$ and so we may ``rewind'' the process back to round $t$ to check the degree distribution of $G_t$. Based on that, we may compute $L(G_t)$ which gives us the desired lower bound for $\alpha(G_t)$. Suppose that round $t$ occurs during phase $q \le r$. A.a.s.\ there are $(1+o(1)) w(k) n$ vertices of degree $k \ge q$ in $G_t$, where
$$
w(k) =
\begin{cases}
y_k(t/n) & \text{ if } q \le k \le r \\
\frac {\lambda^k}{k!} \exp(-\lambda) & \text{ if } k \ge r+1.
\end{cases}
$$
These observations imply the following lower bound.

\begin{lemma}
Suppose that $t = t(n) \sim \lambda n$ for some $\lambda \in (0,\infty)$. Let $\epsilon > 0$ be any (arbitrarily small) constant. Then, there is no strategy that a.a.s.\ creates $G_t$ such that 
$$
\alpha(G_t) < (1-\epsilon) \, n \sum_{k \ge M} \frac {w(k)}{k+1}.
$$ 
\end{lemma}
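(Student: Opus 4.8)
The plan is to reduce, via the Caro--Wei bound (Observation~\ref{obs:ind_set}), to showing that \emph{no} strategy can make $L(G_t)=\sum_{v}\frac{1}{\deg(v)+1}$ smaller than what the greedy strategy achieves, and then to invoke the concentration of $L(G_t)$ under greedy that the differential-equation setup around~(\ref{eq:min_degree_trend})--(\ref{eq:DEs_min_degree}) is designed to provide. Since Observation~\ref{obs:ind_set} gives $\alpha(G_t)\ge L(G_t)$ \emph{deterministically} for every strategy, it suffices to prove that for every strategy, a.a.s. $L(G_t)\ge (1-\epsilon)\,n\sum_{k\ge q}\frac{w(k)}{k+1}$, where $q=q(n)$ is the phase containing round $t$ (which, for the greedy run --- and hence, after the comparison below, for every strategy --- is concentrated on a single value).

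The first main step is the claim, asserted informally just before the lemma, that greedy minimizes $L(G_t)$. I would make this precise by a pathwise coupling: fix an arbitrary strategy $\sigma$, feed both $\sigma$ and greedy the same square sequence $u_1,\dots,u_t$, and show $L(G_t^{\sigma})\ge L(G_t^{\mathrm{greedy}})$. The mechanism is that $x\mapsto\frac{1}{x+1}$ is convex and decreasing, so $L$ is Schur-convex in the degree sequence, while greedy keeps that sequence as balanced from below as possible. Concretely I would run an exchange argument at the first round $s_0$ at which $\sigma$ places its circle on a vertex whose degree exceeds the current minimum: re-routing that circle to a minimum-degree vertex and afterwards mirroring $\sigma$ with the two affected vertices' roles swapped cannot increase the eventual $L$, and iterating yields the inequality. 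Loops and parallel edges only lower degrees, hence only raise $L$, so they do not reverse anything; to avoid tracking them one may run the whole comparison with ``number of squares on $v$ plus number of circles on $v$'' in place of $\deg(v)$, which dominates it. (An alternative, cleaner to state but equally work to verify, is to prove optimality of greedy in expectation by backward induction on the round, checking that the value function $V_s(G):=\min_{\text{strategies}}\E[L(G_t)\mid G_s=G]$ retains a Schur-type monotonicity in the degree sequence and is therefore minimized at each step by placing the circle at a minimum-degree vertex.)

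The second main step is the concentration of $L(G_t^{\mathrm{greedy}})$, for which the paper has already laid the groundwork. Running greedy, apply the differential equation method (Section~\ref{sec:DEs}) phase by phase to the variables $Y_i(s)$, with one-step trend~(\ref{eq:min_degree_trend}) and limiting system~(\ref{eq:DEs_min_degree}): the trend is affine and the variables are trivially $\bigo(n)$-bounded with $\bigo(1)$ one-step changes, so the black-box hypotheses hold, giving a.a.s. $|Y_i(s)-y_i(s/n)n|=o(n)$ throughout every phase and phase boundaries at $s_j\sim x_j n$. Hence a.a.s. round $t$ falls in the deterministic phase $q$, there are no vertices of degree below $q$, the number of vertices of degree $k$ with $q\le k\le r$ is $(1+o(1))\,y_k(t/n)\,n=(1+o(1))\,w(k)\,n$, and (by the Poisson control of per-vertex square counts from Lemma~\ref{lem:squares}) the number of degree-$k$ vertices with $k\ge r+1$ is $(1+o(1))\,\frac{\lambda^k}{k!}e^{-\lambda}n=(1+o(1))\,w(k)n$; summing, a.a.s.
$$
L(G_t^{\mathrm{greedy}})=\sum_{k\ge q}\frac{Y_k(t)}{k+1}=(1+o(1))\,n\sum_{k\ge q}\frac{w(k)}{k+1}.
$$
Combining with the first step, for any strategy $\sigma$ we obtain, a.a.s., $\alpha(G_t^{\sigma})\ge L(G_t^{\sigma})\ge L(G_t^{\mathrm{greedy}})=(1+o(1))\,n\sum_{k\ge q}\frac{w(k)}{k+1}\ge(1-\epsilon)\,n\sum_{k\ge q}\frac{w(k)}{k+1}$ once $n$ is large, which is the claim.

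The hard part will be the first step. Its subtlety is that under the coupling the square lands on the \emph{same vertex label} in both processes, but that label can be a low-degree vertex in one process and a high-degree vertex in the other, so ``greedy stays more balanced'' is not preserved in the naive coordinatewise way; one must track the exchange through the single pair of vertices that were treated differently (or, in the alternative route, verify that the value-function monotonicity survives the induction). By contrast, the differential-equation estimates of the second step are routine --- which is presumably why the surrounding text only sketches them --- and the remaining tail and error-term bookkeeping is elementary.
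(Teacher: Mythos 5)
Your proposal follows the paper's route exactly: reduce via Caro--Wei to lower-bounding $L(G_t)=\sum_v 1/(\deg(v)+1)$, argue that the greedy strategy minimizes $L$, and then apply the differential-equation method (via~(\ref{eq:min_degree_trend})--(\ref{eq:DEs_min_degree})) to the greedy run to get the deterministic asymptotic value $n\sum_{k\ge q}w(k)/(k+1)$. (The ``$M$'' under the sum in the lemma statement is a typo for $q$, the phase containing round $t$; you read it correctly.) The paper only asserts greedy optimality with ``It is easy to see that\ldots,'' so your attempt to make it precise is fleshing out what the paper leaves informal, and the flag you raise about that step being the hard part is exactly right.

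However, the specific pathwise exchange you sketch does not close. For a fixed greedy run coupled with a fixed square sequence, the inequality $L(G_t^\sigma)\ge L(G_t^{\mathrm{greedy}})$ is simply false, because greedy's $L$ is tie-breaking sensitive: take $n=4$, $t=2$, $u_1=1$, $u_2=2$; the greedy run with $v_1=2$ ends with degree sequence $(2,1,1,0)$ and $L=7/3$, whereas the greedy run with $v_1=3$ ends with $(1,1,1,1)$ and $L=2$. What is actually needed is the weaker deterministic claim that for every square sequence and every strategy $\sigma$ there exists \emph{some} greedy tie-breaking $\tau$ with $L(G_t^\tau)\le L(G_t^\sigma)$, combined with the fact that the DE estimates track the degree-count vector $(Y_i(s))$, which does not depend on the tie-breaking, so they control $L$ uniformly over all greedy runs at once. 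Your swap-and-mirror at the \emph{first} deviation does not deliver this, for the reason you yourself flag: the squares land on fixed labels and are not swapped, so after the exchange the future contributions of $w$ and $w'$ are asymmetric and the termwise degree comparison does not close. A backward induction that corrects $\sigma$ at the \emph{latest} non-greedy round (where the gain is immediate because no further moves remain) and then proves a monotonicity lemma of the form ``if two configurations differ by moving one degree-unit from a higher-degree to a lower-degree vertex, then running greedy for the remaining rounds cannot increase $L$'' seems the more promising route, but that lemma is nontrivial and is not proved in the paper either. So the gap you identify is real, though it is inherited from the source's sketch rather than introduced by your write-up; the rest of your second step (the DE bookkeeping and the tail handling via Lemma~\ref{lem:squares}) matches the paper and is fine.
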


%%%%%%%%%%%%%%%%%%%%%%%%%%%%%%%%%%%%%%%%%%%%%%%%%
\section{Conclusions}\label{sec:conclusions}
%%%%%%%%%%%%%%%%%%%%%%%%%%%%%%%%%%%%%%%%%%%%%%%%%

In this paper, we investigated three monotone properties. Our bounds are off by at most a multiplicative factor of $2+o(1)$. It would be interesting to close the gap between the upper and the lower bounds (or, at least, narrow them down).
\begin{itemize}
\item The property of containing a complete graph of order $k$ is well understood unless $t=t(n) = \Theta( n \log n)$. 
\item The property of creating a graph with the chromatic number at least $k$, is well understood when $t \gg n \log n$. More work is needed when $t = \bigo ( n \log n )$.
\item The property of not having an independent set of size at least $k$ remains to be investigated when $t = \Theta(n)$. In other regimes, the asymptotic behaviour is determined. 
\end{itemize}

%%%%%%%%%%%%%%%%%%%%%%%%%%%%%%%%%%%%%%%%%%%%%%%%%
\section{Acknowledgements}
%%%%%%%%%%%%%%%%%%%%%%%%%%%%%%%%%%%%%%%%%%%%%%%%%

This work was done while the authors were visiting the Simons Institute for the Theory of Computing.
The second author is supported in part by the Austrian Science Fund (FWF) [10.55776/I6502]. For the purpose of open access, the authors have applied a CC BY public copyright licence to any Author Accepted Manuscript version arising from this submission.

\bibliographystyle{plain}

\bibliography{refs.bib}

\begin{thebibliography}{10}

\bibitem{alon2016probabilistic}
Noga Alon and Joel~H Spencer.
\newblock {\em The probabilistic method}.
\newblock John Wiley \& Sons, 2016.

\bibitem{bayati2010combinatorial}
Mohsen Bayati, David Gamarnik, and Prasad Tetali.
\newblock Combinatorial approach to the interpolation method and scaling limits
  in sparse random graphs.
\newblock In {\em Proceedings of the forty-second ACM symposium on Theory of
  computing}, pages 105--114, 2010.

\bibitem{behague2022}
Natalie Behague, Trent Marbach, Pawe\l{} Pra\l{}at, and Andrzej Ruci\'nski.
\newblock Subgraph games in the semi-random graph process and its
  generalization to hypergraphs.
\newblock {\em arXiv preprint arXiv:2105.07034}, 2022.

\bibitem{beneliezer2020fast}
Omri Ben-Eliezer, Lior Gishboliner, Dan Hefetz, and Michael Krivelevich.
\newblock Very fast construction of bounded-degree spanning graphs via the
  semi-random graph process.
\newblock {\em Proceedings of the 31st Symposium on Discrete Algorithms
  (SODA'20)}, pages 728--737, 2020.

\bibitem{beneliezer2019semirandom}
Omri Ben-Eliezer, Dan Hefetz, Gal Kronenberg, Olaf Parczyk, Clara Shikhelman,
  and Miloš Stojaković.
\newblock Semi-random graph process.
\newblock {\em Random Structures \& Algorithms}, 56(3):648--675, 2020.

\bibitem{BD20}
Patrick Bennett and Andrzej Dudek.
\newblock A gentle introduction to the differential equation method and dynamic
  concentration.
\newblock {\em Discrete Mathematics}, 345(12):113071, 2022.

\bibitem{bohman2009hamilton}
Tom Bohman and Alan Frieze.
\newblock Hamilton cycles in 3-out.
\newblock {\em Random Structures \& Algorithms}, 35(4):393--417, 2009.

\bibitem{burova2022semi}
Sofiya Burova and Lyuben Lichev.
\newblock The semi-random tree process.
\newblock {\em arXiv preprint arXiv:2204.07376}, 2022.

\bibitem{caro1979new}
Yair Caro.
\newblock New results on the independence number.
\newblock Technical report, Technical Report, Tel-Aviv University, 1979.

\bibitem{dubhashi1998balls}
Devdatt Dubhashi and Desh Ranjan.
\newblock Balls and bins: A study in negative dependence.
\newblock {\em Random Structures \& Algorithms}, 13(5):99--124, 1998.

\bibitem{frieze2023building}
Alan Frieze, Pu~Gao, Calum MacRury, Pawe{\l} Pra{\l}at, and Gregory Sorkin.
\newblock Building hamiltonian cycles in the semi-random graph process in less
  than $2 n $ rounds.
\newblock {\em arXiv preprint arXiv:2311.05533}, 2023.

\bibitem{frieze2022hamilton}
Alan Frieze and Gregory~B Sorkin.
\newblock Hamilton cycles in a semi-random graph model.
\newblock {\em arXiv preprint arXiv:2208.00255}, 2022.

\bibitem{frieze1986maximum}
Alan~M Frieze.
\newblock Maximum matchings in a class of random graphs.
\newblock {\em Journal of Combinatorial Theory, Series B}, 40(2):196--212,
  1986.

\bibitem{gao2020hamilton}
Pu~Gao, Bogumił Kamiński, Calum MacRury, and Paweł Prałat.
\newblock Hamilton cycles in the semi-random graph process.
\newblock {\em European Journal of Combinatorics}, 99:103423, 2022.

\bibitem{gao2022fully}
Pu~Gao, Calum MacRury, and Pawel Pralat.
\newblock A fully adaptive strategy for hamiltonian cycles in the semi-random
  graph process.
\newblock In Amit Chakrabarti and Chaitanya Swamy, editors, {\em Approximation,
  Randomization, and Combinatorial Optimization. Algorithms and Techniques,
  {APPROX/RANDOM} 2022, September 19-21, 2022, University of Illinois,
  Urbana-Champaign, {USA} (Virtual Conference)}, volume 245 of {\em LIPIcs},
  pages 29:1--29:22. Schloss Dagstuhl - Leibniz-Zentrum f{\"{u}}r Informatik,
  2022.

\bibitem{gao2022perfect}
Pu~Gao, Calum MacRury, and Pawe{\l} Pra{\l}at.
\newblock Perfect matchings in the semirandom graph process.
\newblock {\em SIAM Journal on Discrete Mathematics}, 36(2):1274--1290, 2022.

\bibitem{gilboa2021semi}
Shoni Gilboa and Dan Hefetz.
\newblock Semi-random process without replacement.
\newblock In {\em Extended Abstracts EuroComb 2021}, pages 129--135. Springer,
  2021.

\bibitem{JLR}
Svante Janson, Tomasz \L{}uczak, and Andrzej Ruci\'nski.
\newblock {\em Random graphs}, volume~45.
\newblock John Wiley \& Sons, 2011.

\bibitem{Karonski_Frieze}
Michal Karo\'nski and Alan Frieze.
\newblock {\em Introduction to Random Graphs}.
\newblock Cambridge University Press, 2016.

\bibitem{pittel}
Michal Karoński, Ed~Overman, and Boris Pittel.
\newblock On a perfect matching in a random digraph with average out-degree
  below two.
\newblock {\em Journal of Combinatorial Theory, Series B}, 143, 03 2020.

\bibitem{koerts2022k}
Hidde Koerts.
\newblock k-connectedness and k-factors in the semi-random graph process.
\newblock Master's thesis, University of Waterloo, 2022.

\bibitem{Kurtz1970}
Thomas~G. Kurtz.
\newblock Solutions of ordinary differential equations as limits of pure jump
  {M}arkov processes.
\newblock {\em J. Appl. Probability}, 7:49--58, 1970.

\bibitem{macrury2022sharp}
Calum MacRury and Erlang Surya.
\newblock Sharp thresholds in adaptive random graph processes.
\newblock {\em Random Structures \& Algorithms}, 64(3):741--767, 2024.

\bibitem{molloy2023matchings}
Michael Molloy, Pawel Pralat, and Gregory~B Sorkin.
\newblock Matchings and loose cycles in the semirandom hypergraph model.
\newblock {\em arXiv preprint arXiv:2401.00559}, 2023.

\bibitem{Harjas}
Pawe{\l} Pra{\l}at and Harjas Singh.
\newblock Power of $k$ choices in the semi-random graph process.
\newblock {\em Electronic Journal of Combinatorics}, 31(1):\#P1.11, 2024.

\bibitem{Warnke2020}
Lutz Warnke.
\newblock On {W}ormald's differential equation method.
\newblock {\em Combin. Probab. Comput.}, in press.

\bibitem{wei1981lower}
VK~Wei.
\newblock A lower bound on the stability number of a simple graph.
\newblock Technical report, Bell Laboratories Technical Memorandum New Jersey,
  1981.

\bibitem{W1995}
Nicholas~C. Wormald.
\newblock Differential equations for random processes and random graphs.
\newblock {\em Ann. Appl. Probab.}, 5(4):1217--1235, 1995.

\bibitem{W1999}
Nicholas~C. Wormald.
\newblock The differential equation method for random graph processes and
  greedy algorithms.
\newblock In {\em {In Lectures on Approximation and Randomized Algorithms, PWN,
  Warsaw}}, pages 73--155, 1999.

\end{thebibliography}

\end{document}